\title[Gromov--Witten Invariants of Blow-Ups] {The Abelian/non-Abelian Correspondence and Gromov--Witten Invariants of Blow-Ups}
\author[Coates]{Tom Coates}
\address{Department of Mathematics\\
Imperial College London\\
180 Queen's Gate\\
London SW7 2AZ
\\UK}
\email{t.coates@imperial.ac.uk}
\author[Lutz]{Wendelin Lutz}
\address{Department of Mathematics\\
Imperial College London\\
180 Queen's Gate\\
London SW7 2AZ
\\UK}
\email{wl4714@imperial.ac.uk}
\author[Shafi]{Qaasim Shafi}
\address{Department of Mathematics\\
Imperial College London\\
180 Queen's Gate\\
London SW7 2AZ\\
UK}
\email{mqs14@imperial.ac.uk}
\DeclareMathOperator{\Ann}{{Ann}}
\DeclareMathOperator{\Bl}{{Bl}}
\DeclareMathOperator{\ch}{ch}
\newcommand{\Cstar}{\CC^\times}
\DeclareMathOperator{\ev}{{ev}}
\DeclareMathOperator{\Fl}{{Fl}}
\DeclareMathOperator{\GL}{{GL}}
\DeclareMathOperator{\GM}{{GM}}
\DeclareMathOperator{\Gr}{{Gr}}
\DeclareMathOperator{\Hom}{{Hom}}
\DeclareMathOperator{\image}{{im}}
\DeclareMathOperator{\NE}{{NE}}
\DeclareMathOperator{\Pic}{{Pic}}
\DeclareMathOperator{\rk}{{rk}}
\DeclareMathOperator{\Spec}{{Spec}}
\newcommand{\tw}{\text{\rm tw}}
\DeclareMathOperator{\cH}{\mathcal{H}}
\DeclareMathOperator{\cL}{\mathcal{L}}
\DeclareMathOperator{\cO}{\mathcal{O}}
\DeclareMathOperator{\CC}{\mathbb{C}}
\DeclareMathOperator{\OO}{\mathcal{O}}
\DeclareMathOperator{\PP}{\mathbb{P}}
\DeclareMathOperator{\QQ}{\mathbb{Q}}
\DeclareMathOperator{\RR}{\mathbb{R}}
\DeclareMathOperator{\ZZ}{\mathbb{Z}}
\newcommand{\bt}{{\bf t}}
\newcommand{\GIT}{/\!\!/}
\newcommand{\ip}{\raise1pt\hbox{\large $\lrcorner$}}
\theoremstyle{plain}
\newtheorem{theorem}{Theorem}[section]
\newtheorem{proposition}[theorem]{Proposition}
\newtheorem{lemma}[theorem]{Lemma}
\newtheorem*{lem*}{Lemma}
\newtheorem{corollary}[theorem]{Corollary}
\newtheorem{conjecture}[theorem]{Conjecture}
\theoremstyle{definition}
\newtheorem{definition}[theorem]{Definition}
\newtheorem{remark}[theorem]{Remark}
\newtheorem*{remark*}{Remark}
\newtheorem{example}[theorem]{Example}
\newtheorem{assumption}[theorem]{Assumption}
\begin{document}

\begin{abstract}
	We prove the Abelian/non-Abelian Correspondence with bundles for target spaces that are partial flag bundles, combining and generalising results by Ciocan-Fontanine--Kim--Sabbah, Brown, and Oh. From this we deduce how genus-zero Gromov--Witten invariants change when a smooth projective variety $X$ is blown up in a complete intersection defined by convex line bundles. In the case where the blow-up is Fano, our result gives closed-form expressions for certain genus-zero invariants of the blow-up in terms of invariants of $X$. We also give a reformulation of the Abelian/non-Abelian Correspondence in terms of Givental's formalism, which may be of independent interest. 
\end{abstract}
\maketitle
\section{Introduction}

Gromov--Witten invariants, roughly speaking, count the number of curves in a projective variety $X$ that are constrained to pass through various cycles. They play an essential role in mirror symmetry, and have been the focus of intense activity in symplectic and algebraic geometry over the last 25 years. Despite this, there are few effective tools for computing the Gromov--Witten invariants of blow-ups. In this paper we improve the situation somewhat: we determine how genus-zero Gromov--Witten invariants change when a smooth projective variety $X$ is blown up in a complete intersection of convex line bundles. In the case where the blow-up $\tilde{X}$ is Fano, a special case of our result gives closed-form expressions for genus-zero one-point descendant invariants of $\tilde{X}$ in terms of invariants of $X$, and hence determines the small $J$-function of $\tilde{X}$.

Suppose that $Z \subset X$ is the zero locus of a regular section of a direct sum of convex (or nef) line bundles $$E = L_0 \oplus \cdots \oplus L_r \to X$$ and that $\tilde{X}$ is the blow-up of $X$ in $Z$. To determine the genus-zero Gromov--Witten invariants of $\tilde{X}$, we proceed in two steps. First, we exhibit $\tilde{X}$ as the zero locus of a section of a convex vector bundle on the bundle of Grassmannians $\Gr(r,E^\vee) \to X$: this is Theorem~\ref{step one} below. We then establish a version of the Abelian/non-Abelian Correspondence~\cite{CFKS2008} that determines the genus-zero Gromov--Witten invariants of such zero loci. This is the Abelian/non-Abelian Correspondence with bundles, for target spaces that are partial flag bundles -- see Theorem~\ref{step two}. It builds on and generalises results by Ciocan-Fontanine--Kim--Sabbah~\cite[\S6]{CFKS2008}, Brown~\cite{Brown2014}, and Oh~\cite{Oh2016}.

\begin{theorem}[see Proposition~\ref{geometricconstruction} below for a more general result] \label{step one}
	Let $X$ be a smooth projective variety, let $E = L_0 \oplus \cdots \oplus L_r \to X$ be a direct sum of line bundles, and let $Z \subset X$ be the zero locus of a regular section $s$ of $E$. Let $\pi \colon \Gr(r,E^\vee) \to X$ be the Grassmann bundle of subspaces and let $S \to \Gr(r,E^\vee)$ be the tautological subbundle. Then the composition $$S \hookrightarrow \pi^*E^\vee \xrightarrow{\pi^* s^\vee} \cO $$ defines a regular section of $S^\vee$, and the zero locus of this section is the blow-up $\tilde{X} = \Bl_Z X$.
\end{theorem}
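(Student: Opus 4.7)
The strategy is to identify the scheme-theoretic zero locus $Y \subset \Gr(r, E^\vee)$ of $\sigma := \pi^* s^\vee|_S$ with $\Bl_Z X$ via the universal property of the blow-up, and separately to verify that $\sigma$ is regular by a direct dimension count. The key object throughout is the tautological short exact sequence
\[
0 \to S \to \pi^* E^\vee \to Q \to 0
\]
on $\Gr(r, E^\vee)$, where $Q$ is the rank-$1$ universal quotient bundle.

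First I would bound the dimension of $Y$. Pointwise over $x \in X$, the section $\sigma$ vanishes at $(x, S_x) \in \Gr(r, E^\vee_x)$ exactly when $S_x \subset \ker s^\vee(x)$. If $x \notin Z$ then $s^\vee(x)$ is a nonzero linear form on $E^\vee_x$, its kernel is $r$-dimensional, and $S_x$ is uniquely determined; if $x \in Z$ then $s^\vee(x) = 0$ and the fiber of $Y$ over $x$ is all of $\Gr(r, E^\vee_x) \cong \PP^r$. Regularity of $s$ forces $\dim Z = \dim X - r - 1$, and combining the two cases gives $\dim Y = \dim X = \dim \Gr(r, E^\vee) - \rk S^\vee$. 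Hence $\sigma$ is a regular section, and no irreducible component of $Y$ is contained in the codimension-one locus $\pi^{-1}(Z) \cap Y$.

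Next, on $Y$ the morphism $\pi^* s^\vee \colon \pi^* E^\vee \to \cO$ annihilates $S$ by definition of $Y$, so it factors through a morphism $\bar s \colon Q|_Y \to \cO_Y$ whose image is the pulled-back ideal sheaf $\pi^* \cI_Z \cdot \cO_Y$. Since $\bar s$ is a map from a line bundle that is nonzero on every irreducible component of $Y$ (by the dimension count), it is injective, and so $\pi^* \cI_Z \cdot \cO_Y \cong Q|_Y$ is invertible. The universal property of the blow-up now yields a canonical $X$-morphism $f \colon Y \to \Bl_Z X$. Conversely, on $\Bl_Z X$ the pulled-back ideal $\pi^* \cI_Z \cdot \cO_{\Bl_Z X}$ is an invertible quotient of $\pi^* E^\vee$ via $\pi^* s^\vee$, whose kernel $K$ is a rank-$r$ locally free subbundle; the classifying map $g \colon \Bl_Z X \to \Gr(r, E^\vee)$ of $K$ factors through $Y$ because $K \subset \ker(\pi^* s^\vee)$ by construction. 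One then checks that $f$ and $g$ are mutually inverse by matching the tautological data on both sides. The most delicate step in the argument is the injectivity of $\bar s$, which is exactly what the regularity dimension count supplies.
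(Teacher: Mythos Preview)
Your proof is correct and takes a genuinely different route from the paper's. The paper argues locally: after trivialising $E$ on an affine neighbourhood and using that $Z$ is smooth of the expected codimension, row and column operations bring $s^\vee$ into the shape $(x_0,\ldots,x_r)$ where the $x_i$ are part of a regular system of parameters; one then writes down the single equation $\sum_i x_i y_i = 0$ for the zero locus in Pl\"ucker coordinates and recognises it as the standard local presentation of the blow-up. Your argument is instead global and structural, matching the universal properties of the blow-up and of the Grassmann bundle directly. The paper's approach has the virtue of being completely explicit and of extending without change to arbitrary degeneracy loci (this is how Proposition~\ref{geometricconstruction} is actually stated and proved); yours is cleaner for the rank-one target case and makes transparent exactly where the regularity of $s$ is used, namely in forcing $Y$ to be Cohen--Macaulay so that the map $\bar s$ from the line bundle $Q|_Y$ is injective. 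One small point worth making explicit in your write-up: the implication ``$\bar s$ is nonzero on every component $\Rightarrow$ $\bar s$ is injective'' relies on $Y$ having no embedded primes, which follows because $Y$ is a local complete intersection in a smooth variety---a consequence of the regularity you established, but it deserves a sentence.
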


\noindent If the line bundles $L_i$ are convex, then the bundle $S^\vee$ is also convex.
The fact that $\tilde{X}$ is regularly embedded into $\Gr(r, E^\vee)\cong \PP(E)$ (where $\PP(E)$ is the projective bundle of lines) is well-known and true in more generality, see for example \cite[Appendix B8.2]{FultonIntersection} and \cite[Lemma $2.1$]{AluffiChernClasses}. However, to apply the Abelian/non-Abelian correpondence, the crucial point is that $\tilde{X}$ is cut out by a regular section of an explicit representation-theoretic bundle on $\Gr(r, E^\vee)$. Although this should be well-known to experts, we have been unable to find a reference for this.\\
To apply Theorem~\ref{step one} to Gromov--Witten theory, and to state the Abelian/non-Abelian Correspondence, we will use Givental's formalism~\cite{Givental2004}. This is a language for working with Gromov--Witten invariants and operations on them, in terms of linear symplectic geometry. We give details in \S\ref{givental formalism} below, but the key ingredients are, for each smooth projective variety $Y$, an infinite-dimensional symplectic vector space $\cH_Y$ called the Givental space and a Lagrangian submanifold $\cL_Y \subset \cH_Y$. Genus-zero Gromov--Witten invariants of $Y$ determine and are determined by $\cL_Y$. 

We will also consider \emph{twisted} Gromov--Witten invariants~\cite{CoatesGivental2007}. These are invariants of a projective variety $Y$ which depend also on a bundle $F \to Y$ and a characteristic class $\mathbf{c}$. For us, this characteristic class will always be the equivariant Euler class (or total Chern class)\begin{align} \label{intro equivariant Euler}
	\mathbf{c}(V) = \sum_{k=0}^d \lambda^{d-k} c_k(V) && \text{where $d$ is the rank of the vector bundle $V$.}
\end{align}
The parameter $\lambda$ here can be thought of as the generator for the $S^1$-equivariant cohomology of a point. There is a Lagrangian submanifold $\cL_{F_\lambda} \subset \cH_Y$ that encodes genus-zero Euler-twisted invariants of $Y$;  the Quantum Riemann--Roch theorem~\cite{CoatesGivental2007} implies that
\[
	\Delta_{F_\lambda} \cL_Y = \cL_{F_\lambda}
\]
where $\Delta_{F_\lambda} \colon \cH_Y \to \cH_Y$ is a certain linear symplectomorphism. This gives a family of Lagrangian submanifolds $\lambda \mapsto \cL_{F_\lambda}$ defined over $\QQ(\lambda)$, that is, a meromorphic family of Lagrangian submanifolds parameterised by $\lambda$.  When $F$ satisfies a positivity condition called convexity, the family $\lambda \mapsto \cL_\lambda$ extends analytically across $\lambda=0$ and the limit $\cL_{F_0}$ exists. This limiting submanifold $\cL_{F_0} \subset \cH_Y$ determines genus-zero Gromov--Witten invariants of the subvariety of $Y$ cut out by a generic section of $F$~\cite{CoatesGivental2007,Coates2014}. Theorem~\ref{step one} therefore allows us to determine genus-zero Gromov--Witten invariants of the blow-up $\tilde{X}$, by analyzing the limiting submanifold~$\cL_{S^\vee_0}$.

Our second main result, Theorem~\ref{step two}, applies to the Grassmann bundle $\Gr(r,E^\vee) \to X$ considered in Theorem~\ref{step one}, and more generally to any partial flag bundle $\Fl(E) \to X$ induced by $E$. Such a partial flag bundle can be expressed as a GIT quotient $A \GIT G$, where $G$ is a product of general linear groups, and so any representation $\rho$ of $G$ on a vector space $V$ induces a vector bundle $V^G \to \Fl(E)$ with fiber $V$. See \S\ref{flag} for details of the construction. We give an explicit family of elements of $\cH_{\Fl(E)}$, 
\begin{align} \label{intro twisted I}
	(t, \tau) \mapsto I_{\GM}(t, \tau, z) && \text{$t \in \CC^R$ for some $R$, $\tau \in H^\bullet(X)$}
\end{align}
defined in terms of genus-zero Gromov--Witten invariants of $X$ and explicit hypergeometric functions, and show that this family, after changing the sign of $z$, lies on the Lagrangian submanifold that determines Euler-twisted Gromov--Witten invariants of $\Fl(E)$ with respect to~$V^G$.

\begin{theorem}[see Definition~\ref{IGM definition special case} and Theorem~\ref{IGM on twisted cone}]\label{step two}
	For all $t \in \CC^R$ and $\tau \in H^\bullet(X)$,
	$$I_{\GM}(t, \tau, {-z}) \in \cL_{V^G_\lambda}$$
\end{theorem}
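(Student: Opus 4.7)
The plan is to prove this by a two-stage abelianization argument. The partial flag bundle $\Fl(E) = A \GIT G$ is related to the abelian quotient $\Fl(E)_T := A \GIT T$ by a maximal torus $T \subset G$, and $\Fl(E)_T$ is an iterated projective bundle tower over $X$. I would first exhibit an explicit ``abelian'' $I$-function that lies on the Euler-twisted Lagrangian cone for $\Fl(E)_T$, then transform it into $I_{\GM}$ via the Weyl antisymmetrization procedure of Ciocan-Fontanine--Kim--Sabbah~\cite{CFKS2008}, and verify that the resulting object lands on the non-abelian cone $\cL_{V^G_\lambda}$.

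For the abelian step I would iteratively apply Brown's theorem~\cite{Brown2014} along the tower $\Fl(E)_T \to \cdots \to \PP(E) \to X$. Brown's result expresses the $I$-function of a projective bundle as an explicit hypergeometric modification of the $J$-function of its base, so iteration produces an explicit $I$-function $I_T(t,\tau,z)$ for $\Fl(E)_T$ in terms of genus-zero Gromov--Witten invariants of $X$; convexity of the $L_i$ guarantees that the relevant series are regular at $z=0$. The representation $\rho$ of $G$ decomposes as a direct sum of $T$-characters, each inducing a line bundle on $\Fl(E)_T$ whose total direct sum is the abelian analogue of $V^G$. By the Quantum Riemann--Roch theorem~\cite{CoatesGivental2007}, Euler-twisting by each such line bundle multiplies $I_T$ by an explicit hypergeometric factor in $\lambda$, producing a twisted abelian $I$-function lying on the corresponding Euler-twisted cone for $\Fl(E)_T$.

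For the non-abelian step I would follow the strategy of Ciocan-Fontanine--Kim--Sabbah, incorporating the technique of Oh~\cite{Oh2016} for the untwisted flag-bundle case. The function $I_{\GM}$ should be obtained from the twisted abelian $I$-function by identifying the $T$-weights with Chern roots of the tautological subbundles on $\Fl(E)$, multiplying by the product of the non-zero roots of $G$ to account for the difference between the tangent bundles of $\Fl(E)_T$ and $\Fl(E)$, and antisymmetrizing over the Weyl group of $G$. I would then show that this operation maps the twisted abelian cone into the twisted non-abelian cone, using a virtual torus localization argument on an appropriate graph space stratified by the Weyl group action.

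The main obstacle will be the relative character of the setup. Ciocan-Fontanine--Kim--Sabbah prove their non-abelian transfer only for $X = \text{pt}$, and while Brown and Oh separately handle the bundle situation and the flag target, their constructions need to be interlocked here. The hypergeometric modifications inherited from the iterated Brown construction must be shown to commute with both the Weyl antisymmetrization and the Euler twist by $V^G$. I expect the technical heart of the argument to be a localization analysis on the graph space of $\Fl(E)$ that simultaneously respects the fibration over $X$ and the Weyl action, with the contributions from non-trivial fixed loci reproducing precisely the twisted hypergeometric factors that appear in the explicit definition of $I_{\GM}$.
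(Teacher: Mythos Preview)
Your overall strategy is in the right direction, but you are proposing far more work than is needed, and you have not seen the shortcut that makes the paper's proof almost formal.

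Your abelian step is essentially what the paper does: start from Brown's $I$-function $I_{\Fl(E)_T}$ on $\cL_{\Fl(E)_T}$, and twist by the abelianised bundle $V^T$ (and by the root bundle $\Phi$) using explicit hypergeometric modification factors. The paper packages this as a general ``twisting the $I$-function'' statement (Proposition~\ref{twisted=Deltad}): any family on a cone that satisfies the Divisor Equation, multiplied by the appropriate modification factor, equals $\Delta_{F}\bigl(D_{F}(I)\bigr)$ and hence lands on the $F$-twisted cone by Quantum Riemann--Roch.

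The divergence is in your non-abelian step. You propose to prove directly, by a new virtual localization analysis on a graph space for $\Fl(E)$, that Weyl antisymmetrization carries the twisted abelian cone to the twisted non-abelian cone --- in effect, to redo Oh's argument in the twisted setting. The paper avoids this entirely. The key observation is that the Givental--Martin modification (project along $p$, take $\lambda\to 0$) of Brown's $I$-function is \emph{exactly} Oh's $I$-function $I_{\Fl(E)}$ (Proposition~\ref{brownoh}), and that the operators $\Delta_{V^T_\mu}$,~$D_{V^T_\mu}$ project under $p$ to $\Delta_{V^G_\mu}$,~$D_{V^G_\mu}$. Hence
\[
I_{\GM}=\Delta_{V^G_\mu}\bigl(D_{V^G_\mu}(I_{\Fl(E)})\bigr),
\]
and since Oh's theorem already gives $I_{\Fl(E)}(t,\tau,-z)\in\cL_{\Fl(E)}$, one more application of Corollary~\ref{Ithm} yields $I_{\GM}(t,\tau,-z)\in\cL_{V^G_\mu}$. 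No localization beyond what is already inside Brown's and Oh's theorems is required.

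In short, the ``main obstacle'' you identify --- making the relative Brown construction, the Weyl transfer, and the Euler twist interact on a graph space --- dissolves once you realise that the $V^G$-twist can be applied \emph{after} passing to the non-abelian side, with Oh's result used as a black box. Your route would work in principle but amounts to reproving Oh's theorem with an extra twisting bundle present; the paper's route reduces the whole theorem to a commutativity check between the Givental--Martin projection and the Quantum Riemann--Roch operators. (A minor point: you phrase the transfer via Weyl anti-invariants as in \cite{CFKS2008}; the paper instead works with the quotient of the Weyl-invariant part of $H^\bullet(A\GIT T)$ via Martin's theorem. These are equivalent, but the invariant/quotient picture is what makes the compatibility of $\Delta$ and $D$ with $p$ transparent.)
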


\noindent Under an ampleness condition -- which holds, for example, whenever the blow-up $\tilde{X}$ in Theorem~\ref{step one} is Fano -- the family \eqref{intro twisted I} takes a particularly simple form
$$ 
I_{\GM}(t, \tau, z) = z \left(1 + o(z^{-1})\right)
$$
and standard techniques in Givental formalism allow us to determine genus-zero twisted Gromov--Witten invariants of $\Fl(E)$ explicitly: see Corollaries~\ref{I=J} and~\ref{explicit Gr}. Applying this in the setting of Theorem~\ref{step one}, we recover genus-zero Gromov--Witten invariants of the blow-up $\tilde{X}$ by taking the non-equivariant limit $\lambda \to 0$.

The reader who is focussed on blow-ups can stop reading here, jumping to the end of the Introduction for connections to previous work, \S\ref{flag} for basic setup, Corollary~\ref{explicit Gr} for the key Gromov--Witten theoretic result, and then to \S\ref{examples} for worked examples. In the rest of the Introduction, we explain how Theorem~\ref{step two} should be regarded as an instance of the Abelian/non-Abelian Correspondence~\cite{CFKS2008}.

\pagebreak

The Abelian/non-Abelian Correspondence relates the genus-zero Gromov--Witten theory of quotients $A \GIT G$ and $A \GIT T$, where $A$ is a smooth quasiprojective variety equipped with the action of a reductive Lie group $G$, and $T$ is its maximal torus. We fix a linearisation of this action such that the stable and semistable loci coincide and we suppose that the quotients $A \GIT G$ and $A \GIT T$ are smooth. In our setting the non-Abelian quotient $A \GIT G$ will be a partial flag bundle or Grassmann bundle over $X$, and the Abelian quotient $A \GIT T$ will be a bundle of toric varieties over $X$, that is, a toric bundle in the sense of Brown~\cite{Brown2014}. To reformulate the Abelian/non-Abelian Correspondence of~\cite{CFKS2008} in terms of Givental's formalism, however, we pass to the following more general situation. Let $W$ denote the Weyl group of $T$ in $G$. A theorem of Martin (Theorem~\ref{thm:Martin} below) expresses the cohomology of the non-Abelian quotient $H^\bullet(A \GIT G)$ as a quotient of the Weyl-invariant part of the cohomology of the Abelian quotient $H^\bullet(A \GIT T)^W$ by an appropriate ideal, so there is a quotient map 
\begin{equation} \label{quotient}
	H^\bullet(A \GIT T)^W \to H^\bullet(A \GIT G).
\end{equation}
The Abelian/non-Abelian Correspondence, in the form that we state it below, asserts that this map also controls the relationship between the quantum cohomology of $A \GIT G$ and $A \GIT T$. 

When comparing the quantum cohomology algebras of $A \GIT G$ and $A \GIT T$, or when comparing the Givental spaces of $A \GIT G$ and $A \GIT T$, we need to account for the fact that there are fewer curve classes on $A \GIT G$ than there are on $A \GIT T$. We do this as follows. The Givental space $\cH_Y$ discussed above is defined using cohomology groups $H^\bullet(Y;\Lambda)$ where $\Lambda$ is the Novikov ring for~$Y$: see \S\ref{givental formalism}. The Novikov ring contains formal linear combinations of terms $Q^d$ where $d$ is a curve class on~$Y$. The quotient map \eqref{quotient} induces an isomorphism $H^2(A \GIT T)^W \cong H^2(A \GIT G)$, and by duality this gives a map $\varrho \colon \NE(A \GIT T) \rightarrow \NE (A \GIT G)$ where $\NE$ denotes the Mori cone: see Proposition~\ref{maponmori}. Combining the quotient map \eqref{quotient} with the map on Novikov rings induced by $\varrho$ gives a map
\begin{equation} \label{quotientH}
	   p \colon \cH^W_{A \GIT T} \to \cH_{A \GIT G}
\end{equation}
between the Weyl-invariant part of the Givental space for the Abelian quotient and the Givental space for the non-Abelian quotient.  Here, and also below when we discuss Weyl-invariant functions, we consider the Weyl group $W$ to act on $\cH_{A \GIT T}$ through the combination of its action on cohomology classes and its action on the Novikov ring.

We consider now an appropriate twisted Gromov--Witten theory of $A \GIT T$.
For each root $\rho$ of~$G$, write $L_\rho \to A \GIT T$ for the line bundle determined by $\rho$, and let $\Phi = \oplus_\rho L_\rho$ where the sum runs over all roots. Consider the Lagrangian submanifold~$\cL_{\Phi_\lambda}$ that encodes genus-zero twisted Gromov--Witten invariants of $A \GIT T$. The bundle $\Phi$ is very far from convex, so one cannot expect the non-equivariant limit of $\cL_{\Phi_\lambda}$ to exist. Nonetheless, the projection along \eqref{quotientH} of the Weyl-invariant part of this Lagrangian submanifold does have a non-equivariant limit.

\begin{theorem}(see Corollary~\ref{GMlimit no bundle})
	The limit as $\lambda \to 0$ of $p \Big( \cL_{\Phi_\lambda} \cap \cH^W_{A \GIT T}\Big)$ exists.
\end{theorem}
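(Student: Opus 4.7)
The plan is to reduce everything to an explicit I-function analysis. By combining Brown's mirror theorem for toric bundles with the Quantum Riemann--Roch theorem applied to $\Phi_\lambda$, one obtains an explicit hypergeometric family $I^{\Phi_\lambda}(\tau,z)$ lying on $\cL_{\Phi_\lambda}$, whose dependence on $\lambda$ is visible factor by factor. For each curve class $d \in \NE(A \GIT T)$ and each root $\rho$ of $G$, the contribution is a product that, when $\langle \rho, d \rangle < 0$, develops a simple pole at $\lambda = 0$ coming from an $m = 0$ term in a denominator product of linear factors $\lambda + c_1(L_\rho) + mz$.

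To deal with these poles I would exploit Weyl-invariance. The Weyl group acts simultaneously on roots and on curve classes, and Proposition~\ref{maponmori} tells us that the Mori-cone map $\varrho$ is $W$-invariant, so after applying $p$ the Novikov variable $Q^{\varrho(d)}$ collects the sum over the entire $W$-orbit of $d$. Roots come in pairs $\{\rho,-\rho\}$ with $c_1(L_{-\rho}) = -c_1(L_\rho)$; a pole in the $\rho$-factor at $d$ is matched, inside the orbit sum, by a compensating zero in the $(-\rho)$-factor at the appropriate $W$-translate of $d$, leaving a regular combined factor of the form $\lambda^2 - c_1(L_\rho)^2$. The main obstacle is to make this cancellation simultaneous across all roots at once, since the indexing of the hypergeometric products depends on $d$ and $\rho$ in interlocking ways and the factors cannot be paired off naively. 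Here I plan to follow the strategy of Ciocan-Fontanine--Kim--Sabbah~\cite[\S6]{CFKS2008}: rewrite the Weyl-averaged I-function by extracting across the whole root system a common factor that isolates all potential poles, and identify the residual terms as well-defined elements of $H^\bullet(A \GIT G;\Lambda)$ using Martin's theorem. Since $p$ factors through the quotient appearing in Martin's theorem, the ambiguity in lifting is immaterial, and the resulting expression is manifestly regular at $\lambda = 0$.

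Finally, to pass from convergence of a single family of points to convergence of the whole projected Lagrangian submanifold, I would invoke the ruling structure of the cone $\cL_{\Phi_\lambda}$. The cone is generated by its tangent spaces, each of which is in turn generated from any single point on it by multiplication by $z$-independent cohomology classes together with differentiation in the parameters. These operations commute with Weyl-symmetrisation and with the projection $p$, so the $\lambda \to 0$ convergence of $p\bigl(I^{\Phi_\lambda}(\tau, z)\bigr)$ propagates to $\lambda \to 0$ convergence of the whole submanifold $p\bigl(\cL_{\Phi_\lambda} \cap \cH^W_{A \GIT T}\bigr)$.
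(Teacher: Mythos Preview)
Your overall architecture --- produce an explicit Weyl-invariant family on $\cL_{\Phi_\lambda}$, show that its image under $p$ survives $\lambda\to 0$, then propagate regularity to the whole cone via the ruling --- is exactly the paper's strategy. Two points, however, need correction.

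First, Brown's mirror theorem is the wrong input here. Corollary~\ref{GMlimit no bundle} is stated in the general setting of \S\ref{topology}, where $A$ is an arbitrary smooth quasiprojective $G$-variety; $A\GIT T$ need not be a toric variety or a toric bundle, so Brown's $I$-function is simply unavailable. The paper instead starts from the ordinary $J$-function $J_{A\GIT T}$, which always exists and is automatically Weyl-invariant, and applies the Weyl modification to it (Proposition~\ref{twisted=Deltad} with $F'=0$) to obtain the required family on $\cL_{\Phi_\lambda}$. Everything you wrote after your first sentence still works once you make this substitution; indeed the paper's use of the $J$-function is what makes the argument model-independent.

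Second, your description of the pole cancellation is not quite the mechanism that actually works. You propose matching a pole in the $\rho$-factor at $d$ against a zero in the $(-\rho)$-factor at a $W$-translate of $d$, but no passage to the $W$-orbit is needed or helpful at this stage. For a \emph{fixed} $d$, the factors for $\alpha$ and $-\alpha$ in the Weyl modification $W_d$ combine to produce many ratios of the shape $\dfrac{c_1(L_\alpha)+\lambda+mz}{-c_1(L_\alpha)+\lambda-mz}$, each of which tends to $-1$ as $\lambda\to 0$. After stripping these off one is left (see \eqref{p of I intermediate}) with a single boundary factor $\prod_{\alpha\in\Phi^+}\big(c_1(L_\alpha)+\lambda+\langle c_1(L_\alpha),d\rangle z\big)$ over $\prod_{\alpha\in\Phi^+}\big(c_1(L_\alpha)-\lambda\big)$. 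At $\lambda=0$ the denominator is the class $\omega$; the numerator times $I_d$ is Weyl-anti-invariant, hence divisible by $\omega$ in $H^\bullet(A\GIT T)$, and Martin's theorem makes the quotient well-defined after applying $p$. This is the content of Lemma~\ref{IGMexists}. Your fallback to the CFKS strategy is the right instinct, but the actual cancellation is this elementary $\alpha/{-\alpha}$ pairing within each $d$, not an orbit sum.

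With those two fixes your outline coincides with the paper's proof of Theorem~\ref{GMlimit} specialised to $V^T=0$.
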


\noindent We call this non-equivariant limit the \emph{Givental--Martin cone\footnote{We have not emphasised this point, but the Lagrangian submanifolds $\cL_Y$, $\cL_{F_{\lambda}}$, etc.~are in fact cones~\cite{Givental2004}.}} $\cL_{\GM} \subset \cH_{A\GIT G}$.

\begin{conjecture}[The Abelian/non-Abelian Correspondence] \label{AnA}
	$\cL_{\GM} = \cL_{A \GIT G}$.
\end{conjecture}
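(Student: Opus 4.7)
The plan is to prove Conjecture~\ref{AnA} by combining Theorem~\ref{step two} (in the special case where the representation $V$ of $G$ is trivial, so that $V^G$ is the trivial line bundle on $A \GIT G$ and $\cL_{V^G_\lambda}$ reduces to $\cL_{A \GIT G}$) with a density argument for Givental's Lagrangian cones. I would attempt both inclusions separately.

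For $\cL_{\GM} \subseteq \cL_{A \GIT G}$: Brown's theorem for toric bundles, applied to the Abelian quotient $A \GIT T \to X$, gives a generating family $I_{A \GIT T}(t,\tau,-z)$ on $\cL_{A \GIT T}$. Applying Quantum Lefschetz (i.e., the Coates--Givental Quantum Riemann--Roch theorem) to the root bundle $\Phi$ twists this into a hypergeometric family on $\cL_{\Phi_\lambda}$. After restricting to the Weyl-invariant subspace, projecting along $p$, and passing to the non-equivariant limit, this family is by design the Givental--Martin $I$-function $I_{\GM}(t,\tau,-z)$ of Theorem~\ref{step two}, which (with $V$ trivial) lies on $\cL_{A \GIT G}$. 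To upgrade this family-wise inclusion to a full inclusion of cones, I would use that $\cL_{\GM}$ is a cone ruled by the Givental loop group action and that $I_{\GM}$, as $(t,\tau)$ varies, hits a family whose tangent directions generate enough of $T \cL_{\GM}$ after applying the known tangent-space structure for hypergeometric modifications.

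For the reverse inclusion $\cL_{A \GIT G} \subseteq \cL_{\GM}$: I would first handle the range where the ampleness condition of the paper is in force. Corollaries~\ref{I=J} and~\ref{explicit Gr} then identify $I_{\GM}$ with the small $J$-function $J_{A \GIT G}$ up to a mirror map. Since $J_{A \GIT G}(\tau,-z)$ sweeps out all of $\cL_{A \GIT G}$ as $\tau$ varies, this gives the reverse inclusion in the Fano-like regime. I would then extend to the general case by analytic continuation in the Novikov variables, exploiting that both cones are defined over the same Novikov ring and agree on an open set of the small parameter.

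The main obstacle is the non-equivariant limit. The bundle $\Phi = \oplus_\rho L_\rho$ is very far from convex — the roots appear in cancelling $\pm$ pairs, so individual Euler-class factors develop poles at $\lambda = 0$. The geometric content of Martin's theorem, extended to the full Lagrangian cone rather than just to $H^\bullet$, is precisely that after intersecting with $\cH^W_{A \GIT T}$ and projecting by $p$ these poles cancel. I would try to control this cancellation by a Birkhoff factorisation in the Givental loop group, inducting on the order of the pole at $\lambda = 0$; the hard part is showing that the cancellation is not just a feature of the explicit family $I_{\GM}$ but holds uniformly over the entire twisted cone, which is what is needed for the cone-level statement rather than the pointwise one provided by Theorem~\ref{step two}.
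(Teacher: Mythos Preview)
The statement you are trying to prove is a \emph{conjecture} in the paper, not a theorem: the paper does not supply a proof of $\cL_{\GM}=\cL_{A\GIT G}$ in general. What the paper does prove is the much weaker statement that a \emph{specific} family---the Givental--Martin modification of Brown's $I$-function---lies on $\cL_{A\GIT G}$ (Theorem~\ref{brownoh AnA} and Theorem~\ref{step three}), together with a proof of Conjecture~\ref{AnA family} in the special case where $A$ is a vector space, via Webb's big $I$-function. So there is no ``paper's own proof'' to compare against.

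Your proposal has a genuine gap, and it is precisely the gap the paper itself identifies as the obstruction. In your forward inclusion you write that $I_{\GM}$, as $(t,\tau)$ varies, ``hits a family whose tangent directions generate enough of $T\cL_{\GM}$''. This is the crux: it is only true if $I_{\GM}$ is a \emph{big} $I$-function in the sense of Ciocan-Fontanine--Kim. The paper explains (paragraph following \eqref{special form}) that the Brown and Oh $I$-functions are only \emph{small} $I$-functions, so their tangent spaces do not sweep out the whole cone, and the upgrade from the family-wise statement to the cone-level equality does not follow. Converting a small $I$-function into a big one requires an additional input---identifying the $I$-function with a localisation residue on a quasimap graph space---which is available when $A$ is a vector space (Webb) but is not established in the flag-bundle setting. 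Your ``density via loop group action'' is a restatement of this missing step, not a proof of it.

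Your reverse inclusion has the same problem in disguise. Even in the Fano regime, Corollary~\ref{I=J} only identifies $I_{\GM}$ with $J_{A\GIT G}$ along the \emph{small} parameter $t+\tau\in H^2$, so you recover only a slice of $\cL_{A\GIT G}$, not the whole cone; the phrase ``$J_{A\GIT G}(\tau,-z)$ sweeps out all of $\cL_{A\GIT G}$'' is true only when $\tau$ ranges over all of $H^\bullet$, and you have no control of $I_{\GM}$ there. The proposed analytic continuation in Novikov variables does not address this, because the deficiency is in the parameter space of the family, not in the Novikov variables.
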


\noindent This is a reformulation of \cite[Conjecture~3.7.1]{CFKS2008}. The analogous statement for twisted Gromov--Witten invariants is the Abelian/non-Abelian Correspondence with bundles; this is a reformulation of  \cite[Conjecture~6.1.1]{CFKS2008}. Fix a representation $\rho$ of $G$, and consider the vector bundles $V^G \to A \GIT G$ and $V^T \to A \GIT T$ induced by~$\rho$. Consider the Lagrangian submanifold $\cL_{\Phi_{\lambda} \oplus V^T_{\mu}}$ that encodes genus-zero twisted
Gromov–Witten invariants of $A \GIT T$, where for the twist by the root bundle $\Phi$ we use
the equivariant Euler class \eqref{intro equivariant Euler} with parameter $\lambda$ and for the twist by $V^T$ we use the equivariant Euler class with a different parameter $\mu$. As before, the projection along \eqref{quotientH} of the Weyl-invariant part of this Lagrangian submanifold has a non-equivariant limit with respect to $\lambda$.
\begin{theorem}(see Theorem~\ref{GMlimit})
	The limit as $\lambda \to 0$ of $p \Big(\cL_{\Phi_\lambda \oplus V^T_\mu} \cap \cH^W_{A \GIT T}\Big)$ exists.
\end{theorem}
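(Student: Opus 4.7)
The plan is to factor $\cL_{\Phi_\lambda \oplus V^T_\mu}$ using quantum Riemann--Roch and reduce to the statement of the previous theorem. Since $\Phi$ and $V^T$ have independent twisting parameters, the quantum Riemann--Roch operators of~\cite{CoatesGivental2007} commute and give
$$\cL_{\Phi_\lambda \oplus V^T_\mu} \;=\; \Delta_{\Phi_\lambda} \circ \Delta_{V^T_\mu}\,\cL_{A \GIT T} \;=\; \Delta_{\Phi_\lambda}\,\cL_{V^T_\mu}.$$
Both operators are Weyl-equivariant: $V^T$ descends from a $G$-representation and is therefore $W$-invariant, while $\Phi$ is a sum of line bundles over a union of $W$-orbits of roots. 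Consequently each operator preserves the Weyl-invariant subspace of $\cH_{A \GIT T}$, and
$$\cL_{\Phi_\lambda \oplus V^T_\mu} \cap \cH^W_{A \GIT T} \;=\; \Delta_{\Phi_\lambda}\big(\cL_{V^T_\mu} \cap \cH^W_{A \GIT T}\big).$$
The problem reduces to controlling $p \circ \Delta_{\Phi_\lambda}$ on the Weyl-invariant slice of $\cL_{V^T_\mu}$, with $\mu$ treated as an auxiliary parameter.

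Next I would study the $\lambda$-pole structure of $\Delta_{\Phi_\lambda}$. By the explicit form of the quantum Riemann--Roch operator, $\Delta_{\Phi_\lambda}$ is built from the Chern characters $\ch_k(\Phi)$ multiplied by certain Laurent series in $\lambda$. Since the roots of $G$ come in opposite pairs $\{\rho,-\rho\}$, we have $\Phi \cong \Phi^+ \oplus (\Phi^+)^\vee$ for any choice of positive roots, so $\ch_k(\Phi)$ vanishes for odd $k$. The polar part of $\Delta_{\Phi_\lambda}$ at $\lambda = 0$ therefore lies in the ideal generated by symmetric functions in the $c_1(L_\rho)$ containing the factor $\prod_{\rho>0} c_1(L_\rho)$, and by Martin's theorem (Theorem~\ref{thm:Martin}) this ideal is contained in $\ker p$. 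This is precisely the mechanism underlying the previous theorem, of which the present statement is a $\mu$-parametric refinement.

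The main obstacle is verifying that introducing the parameter $\mu$ does not spoil this cancellation. Because $\cL_{V^T_\mu} \cap \cH^W_{A \GIT T}$ is independent of $\lambda$ and the operator $\Delta_{\Phi_\lambda}$ is independent of $\mu$, the $\lambda \to 0$ limit of $p\big(\Delta_{\Phi_\lambda}\,x(\mu)\big)$ can be taken coefficient-wise in $\mu$ and in the Novikov variables; the argument from the previous theorem (Corollary~\ref{GMlimit no bundle}) then applies verbatim on each coefficient. The only subtlety is to ensure that the polar terms of $\Delta_{\Phi_\lambda}$ multiply the Weyl-invariant $\mu$-coefficients of $x(\mu)$ in a way compatible with Martin's ideal, which is automatic because those coefficients are already Weyl-invariant and hence $p$ acts on them by its defining quotient.
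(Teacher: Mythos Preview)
Your approach has a genuine gap in the analysis of $\Delta_{\Phi_\lambda}$. The central claim---that the polar part of $\Delta_{\Phi_\lambda}$ at $\lambda = 0$ lies in an ideal killed by $p$---is neither justified by the vanishing of odd Chern characters nor true. First, the ideal generated by $\omega = \prod_{\rho > 0} c_1(L_\rho)$ is contained in $\ker p = \Ann(\omega)$ only if $\omega^2 = 0$, which fails already for $\Gr(2,n)$ with $n \geq 4$ (there $p(\omega^2) = c_1(S^\vee)^2 - 4c_2(S^\vee) \neq 0$). Second, and more seriously, the $l=0$ terms of $G$ contribute a scalar factor to $\Delta_{\Phi_\lambda}$ that is independent of all cohomology classes: with $s_0 = \ln\lambda$ one finds a factor $\lambda^{-|\Delta|/2}$ multiplied by an exponential in $z/\lambda, z^3/\lambda^3, \ldots$. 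No cohomological ideal absorbs such terms, so $p \circ \Delta_{\Phi_\lambda}$ has no limit as a multiplication operator, and $p(\Delta_{\Phi_\lambda}\,x)$ diverges for fixed Weyl-invariant $x$. You also invoke Corollary~\ref{GMlimit no bundle} as an input, but in the paper it is a \emph{consequence} of the theorem you are proving.

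The paper avoids this by never isolating $\Delta_{\Phi_\lambda}$. It parametrises $\cL_{\Phi_\lambda \oplus V^T_\mu}$ by the explicit family $\tau \mapsto I^{\tw}(\tau)$, the Weyl modification \eqref{general Weyl twist} of $J_{V^T_\mu}$. The point is that $I^{\tw} = \Delta_{\Phi_\lambda}\big(D_{\Phi_\lambda}(J_{V^T_\mu})\big)$, and only the \emph{combination} $\Delta_{\Phi_\lambda}\circ D_{\Phi_\lambda}$ yields the Novikov-graded factors $W_\beta$ in which the singular contributions from $\alpha$ and $-\alpha$ pair off as $\tfrac{c_1(L_\alpha)+\lambda+mz}{-c_1(L_\alpha)+\lambda-mz}\to -1$; this is the mechanism of Lemma~\ref{IGMexists}, which shows that $p \circ I^{\tw}$ has a limit. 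A general Weyl-invariant point of the cone is then expressed as $p\circ I^{\tw}(\tau^\ddagger) + \sum_\alpha C_\alpha^\ddagger\, z\,\partial_{\tau^\alpha}(p\circ I^{\tw})$, with $\tau^\ddagger$ and $C_\alpha^\ddagger$ determined by matching against the $\lambda$-independent data $(t_0, t_1, \ldots)$; since $p\circ I^{\tw}$ has a limit, so do these coefficients. The cancellation is therefore a feature of the degree-by-degree modification factors $W_\beta$, not of the operator $\Delta_{\Phi_\lambda}$ on its own.
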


\noindent Let us call this limit the \emph{twisted Givental--Martin cone} $\cL_{\GM,V^T_\mu} \subset \cH_{A \GIT G}$. 

\begin{conjecture}[The Abelian/non-Abelian Correspondence with bundles]\label{AnA bundles}
	$\cL_{\GM, V^T_\mu} = \cL_{V^G_\mu}$.
\end{conjecture}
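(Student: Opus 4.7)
The plan is to prove the equality by exhibiting an explicit family that demonstrably lies on both cones, and then invoking the ruling structure of Givental cones to upgrade a pointwise containment to full equality. I would first focus on the case where $A\GIT G$ is a partial flag bundle $\Fl(E)\to X$, for which Brown's toric-bundle $I$-function gives good control of the abelian side, and only then try to abstract the argument.

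Concretely, using Brown's toric-bundle theorem~\cite{Brown2014}, I would write down an explicit family in $\cL_{A\GIT T}$ built from the $J$-function of $X$ twisted by hypergeometric factors indexed by curve classes of the toric fibre. Applying Quantum Riemann--Roch to the Euler twists by $\Phi$ and by $V^T$ transports this family onto $\cL_{\Phi_\lambda\oplus V^T_\mu}$. I would then restrict to the Weyl-invariant part, project along the map $p$ of \eqref{quotientH}, and take the limit $\lambda \to 0$ to produce a candidate family $I_{\GM}(t,\tau,z)$. Writing this candidate in the form $z(1+o(z^{-1}))$ times hypergeometric modifications involving only data on $A\GIT G$, one then checks that $I_{\GM}(t,\tau,-z)$ lies on $\cL_{V^G_\mu}$ by verifying the defining recursive/differential property of points on that overruled cone --- effectively matching it with an $I$-function-type object for $\Fl(E)$ twisted by $V^G$. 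A dimension count and the divisor equation should then allow the family $I_{\GM}$ to sweep out enough rulings of $\cL_{V^G_\mu}$ to promote the containment $\cL_{\GM,V^T_\mu}\subseteq \cL_{V^G_\mu}$ into equality.

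The principal obstacle will be controlling the limit $\lambda \to 0$: the root-bundle twist contributes factors $\prod_\rho(\rho+\lambda)^{(\cdots)}$ with severe singular behaviour at $\lambda=0$, and these are cancelled only after the Weyl-invariant projection produces Vandermonde-type denominators, following the bookkeeping of~\cite[\S6]{CFKS2008} but now translated into Givental's symplectic language. A subtler difficulty is verifying that the limit genuinely lands on $\cL_{V^G_\mu}$ rather than merely on the image under $p$ of some Lagrangian in $\cH_{A\GIT T}$; this requires a nontrivial compatibility between Martin's presentation of $H^\bullet(A\GIT G)$ and the symplectic structure on Givental space, encoded in the interplay between quantum multiplication on the abelian and non-abelian quotients. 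For the conjecture in full generality, beyond partial flag bundles, the first step is not available in closed form, and one would expect to invoke the quasimap wall-crossing machinery of Ciocan-Fontanine--Kim to generate sufficiently many explicit points on $\cL_{A\GIT T}$ for the limiting argument to proceed; this is where the genuinely open part of the problem appears to lie.
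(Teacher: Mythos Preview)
The statement you are trying to prove is a \emph{conjecture}; the paper does not prove it. There is therefore no ``paper's own proof'' to compare against. What the paper does establish is strictly weaker: Theorem~\ref{IGM on twisted cone} shows that the Givental--Martin modification of the twisted Brown $I$-function lies on $\cL_{V^G_\mu}$, and Corollary~\ref{IGMonLGM} shows that it also lies on $\cL_{\GM,V^T_\mu}$. Your concrete plan for the partial flag bundle case --- start from Brown's $I$-function, apply Quantum Riemann--Roch twice, project along $p$, take $\lambda\to 0$, and match with an explicit $V^G$-twisted object on the non-Abelian side --- is exactly what the paper carries out to prove these results. So that part of your proposal is correct and is essentially the paper's argument.

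The genuine gap is your sentence ``a dimension count and the divisor equation should then allow the family $I_{\GM}$ to sweep out enough rulings of $\cL_{V^G_\mu}$ to promote the containment $\cL_{\GM,V^T_\mu}\subseteq\cL_{V^G_\mu}$ into equality.'' This step does not work as stated. The Brown and Oh $I$-functions are \emph{small} $I$-functions: they are parametrised only by $t\in H^2(\Fl(E)_T)^W$ and $\tau\in H^\bullet(X)$, not by all of $H^\bullet(\Fl(E))$. Consequently $I_{\GM}$ does not sweep out enough rulings to determine either cone, and the divisor equation alone cannot make up the difference. The paper discusses precisely this obstruction in the Introduction: one would need a \emph{big} $I$-function in the sense of~\cite{CFK2016}, and obtaining one requires checking that Brown's $I$-function arises from torus localisation on a quasimap graph space. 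The paper does not do this, and leaves the conjecture open (except when $A$ is a vector space, where Webb's big $I$-function~\cite{Webb2018} closes the gap). Your final paragraph correctly identifies quasimap wall-crossing as the relevant machinery, but you should recognise that the ``sweep out'' step is already the open problem, not a routine dimension count.
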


As in~\cite{CFKS2008}, the Abelian/non-Abelian Correspondence implies the Abelian/non-Abelian Correspondence with bundles.

\begin{proposition}
	Conjectures~\ref{AnA} and~\ref{AnA bundles} are equivalent.
\end{proposition}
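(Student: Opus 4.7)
The plan is to prove the equivalence by using the Quantum Riemann--Roch theorem to realise the bundle twist by $V^T_\mu$ (respectively $V^G_\mu$) as the action of a linear symplectomorphism $\Delta_{V^T_\mu}$ on $\cH_{A \GIT T}$ (respectively $\Delta_{V^G_\mu}$ on $\cH_{A \GIT G}$), and then to check that these two symplectomorphisms intertwine under the quotient map $p \colon \cH^W_{A \GIT T} \to \cH_{A \GIT G}$. Since the $V$-twist operators depend only on $\mu$ and not on $\lambda$, they commute with the non-equivariant limit $\lambda \to 0$ that defines the Givental--Martin cones, allowing us to move freely between the untwisted and twisted statements.

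More precisely, Quantum Riemann--Roch provides such $\Delta_{V^T_\mu}$ and $\Delta_{V^G_\mu}$ with
\[
	\cL_{\Phi_\lambda \oplus V^T_\mu} = \Delta_{V^T_\mu} \cL_{\Phi_\lambda} \qquad \text{and} \qquad \cL_{V^G_\mu} = \Delta_{V^G_\mu} \cL_{A \GIT G},
\]
using that the twist operator for a direct sum factors as a commuting composition of twists by the summands. The key compatibility to verify is that $\Delta_{V^T_\mu}$ preserves $\cH^W_{A \GIT T}$ and that on the Weyl-invariant subspace it descends via $p$ to $\Delta_{V^G_\mu}$, i.e.
\[
	p \circ \bigl(\Delta_{V^T_\mu}|_{\cH^W_{A \GIT T}}\bigr) = \Delta_{V^G_\mu} \circ p.
\]
Weyl-invariance is immediate because the Weyl group permutes the line-bundle summands of $V^T$. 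The intertwining holds because $\Delta$ is given by a universal Bernoulli-weighted formula in the Chern characters of the bundle, and Martin's theorem identifies the (Weyl-invariant) Chern character of $V^T$ with that of $V^G$ modulo the defining ideal of~\eqref{quotient}.

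Granted these ingredients, assume Conjecture~\ref{AnA} and compute
\[
	\cL_{\GM, V^T_\mu} = \lim_{\lambda \to 0} p\bigl(\Delta_{V^T_\mu}(\cL_{\Phi_\lambda} \cap \cH^W_{A \GIT T})\bigr) = \Delta_{V^G_\mu} \lim_{\lambda \to 0} p\bigl(\cL_{\Phi_\lambda} \cap \cH^W_{A \GIT T}\bigr) = \Delta_{V^G_\mu} \cL_{A \GIT G} = \cL_{V^G_\mu},
\]
which is Conjecture~\ref{AnA bundles}; the converse follows by applying $\Delta_{V^G_\mu}^{-1}$ and running this chain of equalities backwards. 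The main obstacle is the intertwining in the previous paragraph: although morally it says that Chern characters match under Martin's isomorphism, one must track the Bernoulli-number coefficients and derivative operators in the QRR formula carefully, and also check that the Novikov-ring factor of $p$ coming from the map $\varrho$ on Mori cones does not interfere (which is automatic, since $\Delta$ acts on cohomological coefficients rather than curve classes). This is essentially the same verification as in \cite[\S6]{CFKS2008}, but it must be done in our Givental-formalism reformulation.
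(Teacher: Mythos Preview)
Your proof is correct and follows essentially the same approach as the paper: the key point in both is that the Quantum Riemann--Roch operator $\Delta_{V^T_\mu}$ projects under $p$ to $\Delta_{V^G_\mu}$, after which the Quantum Riemann--Roch theorem does the rest. The paper handles the converse more simply by noting that Conjecture~\ref{AnA} is the rank-zero special case of Conjecture~\ref{AnA bundles}, but your argument via $\Delta_{V^G_\mu}^{-1}$ is equally valid.
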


\begin{proof}
	Conjecture~\ref{AnA} is the special case of Conjecture~\ref{AnA bundles} where the vector bundles involved have rank zero. To see that Conjecture~\ref{AnA} implies Conjecture~\ref{AnA bundles}, observe that the projection of the Quantum Riemann--Roch operator $\Delta_{V^T_\mu}$ under the map \eqref{quotientH} is $\Delta_{V^G_\mu}$: see Definition~\ref{delta}. Now apply the Quantum Riemann--Roch theorem~\cite{CoatesGivental2007}.
\end{proof}

The following reformulations will also be useful. Given any Weyl-invariant family 
\begin{align*}
	t \mapsto I(t) \in \cH^W_{A \GIT T}
	&&  \text{of the form} &&
	I(t) = \sum_{d \in \NE(A \GIT T)} Q^d I_d(t)
\end{align*}
we define its \emph{Weyl modification} $t \mapsto \widetilde{I}(t) \in  \cH^W_{A \GIT T}$ to be
$$ \widetilde{I}(t) = \sum_{d \in \NE(A \GIT T)} Q^d W_d I_d(t) $$
where $W_d$ is an explicit hypergeometric factor that depends on $\lambda$ -- see~\eqref{modg}.  We prove in Lemma~\ref{IGMexists} below that, for a Weyl-invariant family $t \mapsto I(t)$ the image under \eqref{quotientH} of the Weyl modification $t \mapsto p(\widetilde{I}(t))$ has a well-defined limit as $\lambda \to 0$. We call this limit the \emph{Givental--Martin modification} of $t \mapsto I(t)$ and denote it by $t \mapsto I_{\GM}(t)$; it is a family of elements of $\cH_{A \GIT G}$. Furthermore, if $t \mapsto I(t)$  satisfies the Divisor Equation in the sense of equation~\eqref{divisor equation}, then:
\begin{itemize}[itemsep=0.5ex]	
	\item if $t \mapsto I(t)$ is a family of elements of $\cL_{A \GIT T}$ then $t \mapsto I_{\GM}(t)$ is a family of elements on the Givental--Martin cone $\cL_{\GM}$; and 
	\item if $t \mapsto I(t)$ is a family of elements of the twisted cone $\cL_{V^T_\mu}$ then $t \mapsto I_{\GM}(t)$ is a family of elements on the twisted Givental--Martin cone $\cL_{\GM,V^T_\mu}$.
\end{itemize}	
The first statement here is Corollary~\ref{IGMonLGM} with $F'=0$; the second statement is Corollary~\ref{IGMonLGM}. This lets us reformulate the Abelian/non-Abelian Correspondence in more concrete terms.

\begin{conjecture}[a reformulation of Conjecture~\ref{AnA}] \label{AnA family}
	Let $t \mapsto I(t)$ be a Weyl-invariant family of elements of $\cL_{A \GIT T}$ that satisfies the Divisor Equation. Then the Givental--Martin modification $t \mapsto I_{\GM}(t)$ is a family of elements of $\cL_{A \GIT G}$. 
\end{conjecture}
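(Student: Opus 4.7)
The plan is to deduce Conjecture~\ref{AnA family} from Conjecture~\ref{AnA} via the bullet point recorded immediately before the statement, which identifies $t \mapsto I_{\GM}(t)$ as a family of elements on $\cL_{\GM}$ and is the content of Corollary~\ref{IGMonLGM} with $F' = 0$. The proof of that corollary carries the substantive content of the argument, so I sketch it in some detail.

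First I would prove that for a Weyl-invariant family $t \mapsto I(t)$ on $\cL_{A \GIT T}$ satisfying the Divisor Equation, the Weyl modification $\widetilde{I}(t)$ lies on the Euler-twisted cone $\cL_{\Phi_\lambda}$. By the Quantum Riemann--Roch theorem of~\cite{CoatesGivental2007}, $\cL_{\Phi_\lambda} = \Delta_{\Phi_\lambda}(\cL_{A \GIT T})$, and $\Delta_{\Phi_\lambda}$ factorises over the roots $\rho$ of $G$ as a product of the operators $\Delta_{L_\rho, \lambda}$. Using the Divisor Equation to convert the operator-theoretic action of each $\Delta_{L_\rho,\lambda}$ into an explicit multiplier on the coefficient of each Novikov monomial $Q^d$, the hypergeometric factor $W_d$ of~\eqref{modg} should emerge as precisely that multiplier; this is the standard derivation of Quantum Lefschetz from Quantum Riemann--Roch, applied root-by-root and assembled.

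Next, since the Weyl group $W$ permutes the roots of $G$, the factor $W_d$ is Weyl-invariant and $\widetilde{I}(t) \in \cL_{\Phi_\lambda} \cap \cH^W_{A \GIT T}$. Applying the projection $p$ of~\eqref{quotientH} and invoking Lemma~\ref{IGMexists}, the non-equivariant limit $\lim_{\lambda \to 0} p(\widetilde{I}(t))$ exists and equals $I_{\GM}(t)$. By the very definition of the Givental--Martin cone as the non-equivariant limit of $p\bigl(\cL_{\Phi_\lambda} \cap \cH^W_{A \GIT T}\bigr)$, the limiting family lies on $\cL_{\GM}$. Conjecture~\ref{AnA} then identifies $\cL_{\GM}$ with $\cL_{A \GIT G}$, yielding $I_{\GM}(t) \in \cL_{A \GIT G}$ as required.

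The principal obstacle is the first step: matching the explicit hypergeometric product $W_d$ with the coefficient-wise action of the Quantum Riemann--Roch operator $\Delta_{\Phi_\lambda}$ on a general point of the Lagrangian cone. While $\Delta_{\Phi_\lambda}$ is an infinite product of exponentials of shift operators involving Chern characters and Bernoulli numbers, $W_d$ is a finite rational function of $d$, $z$, and $\lambda$; the Divisor Equation is exactly what is needed to reorganise the exponential shifts in the Novikov variables into the finite product over $k = 1, \ldots, c_1(L_\rho) \cdot d$ defining $W_d$. Once this root-bundle Quantum Lefschetz identification is secured, Weyl invariance of $\Phi$, the existence lemma for the non-equivariant limit, and the hypothesis of Conjecture~\ref{AnA} combine formally to conclude.
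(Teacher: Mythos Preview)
Your approach is correct and matches the paper's: deduce the conjecture from Conjecture~\ref{AnA} via Corollary~\ref{IGMonLGM} (with $F'=0$), whose proof is exactly the three-step argument you outline---show $\widetilde{I}(t)\in\cL_{\Phi_\lambda}$ via Quantum Riemann--Roch, project by $p$, and take $\lambda\to 0$ using Lemma~\ref{IGMexists}.

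One technical point in your sketch deserves correction. The operator $\Delta_{\Phi_\lambda}$ is \emph{not} a shift operator: it is pure multiplication by the cohomology class $\prod_\rho\exp\!\big(G(c_1(L_\rho),z)\big)$, so it carries no degree dependence and cannot by itself produce the $d$-dependent factor $W_d$. In the paper (Proposition~\ref{twisted=Deltad} and Lemma~\ref{delta-M}) the identity is
\[
\widetilde{I}=\Delta_{\Phi_\lambda}\!\big(D_{\Phi_\lambda}(I)\big),
\]
where $D_{\Phi_\lambda}=\prod_\rho\exp\!\big({-}G(z\nabla_{c_1(L_\rho)},z)\big)$ is a genuine differential operator. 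The Divisor Equation enters to convert $z\nabla_{c_1(L_\rho)}$ acting on $I_d$ into multiplication by $c_1(L_\rho)+\langle c_1(L_\rho),d\rangle z$, and the Gamma-like identity $G(x+z,z)=G(x,z)+\mathbf{s}(x)$ then collapses the Bernoulli series to the finite product $W_d$. The extra, nontrivial step you have not mentioned is Lemma~\ref{(3)}: one must check that $D_{\Phi_\lambda}$ preserves $\cL_{A\GIT T}$, which is done by an induction on the degree in the $s_k$-variables using the tangent-space geometry of the cone. With this adjustment your sketch is complete and coincides with the paper's argument.
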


\begin{conjecture}[a reformulation of Conjecture~\ref{AnA bundles}] \label{AnA bundles family}
	Let $t \mapsto I(t)$ be a Weyl-invariant family of elements of $\cL_{V^T_\mu}$ that satisfies the Divisor Equation. Then the Givental--Martin modification $t \mapsto I_{\GM}(t)$ is a family of elements of $\cL_{V^G_\mu}$. 
\end{conjecture}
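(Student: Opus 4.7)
The plan is to reduce the bundled statement to the unbundled statement (Conjecture \ref{AnA family}) using Quantum Riemann--Roch, and then to verify the unbundled statement in the partial flag bundle setting by identifying the Weyl modification of a sufficiently rich known family on $\cL_{A \GIT T}$ with a known family on $\cL_{A \GIT G}$.

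For the reduction: as already noted in the proof of the Proposition following Conjecture \ref{AnA bundles}, the Quantum Riemann--Roch operator $\Delta_{V^T_\mu}$ for the Abelian quotient projects under \eqref{quotientH} to $\Delta_{V^G_\mu}$, and these operators convert $\cL_{A \GIT T}$ and $\cL_{A \GIT G}$ into $\cL_{V^T_\mu}$ and $\cL_{V^G_\mu}$ respectively. The additional check needed is that $\Delta_{V^T_\mu}$ commutes in an appropriate sense with the Weyl modification; this reduces to a character-theoretic identity because $\Delta_{V^T_\mu}$ is a product of one-variable operators indexed by weights of $V^T$ while $W_d$ is a product indexed by roots, and neither product involves the other. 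Once this is in place, $\Delta_{V^T_\mu}^{-1}I(t)$ lies on $\cL_{A \GIT T}$, still satisfies the Divisor Equation, and its Givental--Martin modification equals $\Delta_{V^G_\mu}^{-1}I_{\GM}(t)$, so Conjecture \ref{AnA family} implies Conjecture \ref{AnA bundles family}.

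For the unbundled case in the partial flag bundle setting, my strategy is to start from Brown's explicit toric-bundle I-function on $A \GIT T$, which is manifestly Weyl-invariant, lies on $\cL_{A \GIT T}$, satisfies the Divisor Equation, and carries the genus-zero Gromov--Witten theory of the base $X$. I would compute its Weyl modification explicitly, reorganise the product over roots via the Weyl character formula, and identify the image under $p$ in the limit $\lambda \to 0$ with a hypergeometric modification that by Oh's quasimap mirror theorem for partial flag bundles already lies on $\cL_{A \GIT G}$. Standard Givental-cone reconstruction (Birkhoff factorisation together with the Divisor Equation) then propagates the resulting statement from this one family to every Weyl-invariant family, giving Conjecture \ref{AnA family} in this setting; the bundled version follows by the previous paragraph.

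The main obstacle is the existence of the non-equivariant limit itself, i.e.\ the content of Lemma \ref{IGMexists}. For a general Weyl-invariant family the hypergeometric factor $W_d$ has poles at $\lambda = 0$ that must cancel against the Weyl-antisymmetric part of $I_d(t)$: this cancellation is the Givental-space shadow of Martin's integration formula. I expect establishing it to be the technical heart of the argument, via a residue computation along torus fixed loci in the moduli of quasimaps to $A \GIT T$ combined with a careful Weyl antisymmetrisation. Once the pole-cancellation is in hand, the remaining work is an organisation of results of Ciocan-Fontanine--Kim--Sabbah, Brown, and Oh, together with the commutation check described above.
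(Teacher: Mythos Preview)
The statement you are attempting to prove is left as a conjecture in the paper; it is not proved there for general partial flag bundles. What the paper does establish is the conclusion for one specific family, the twisted Brown $I$-function (Theorem~\ref{step three}, proved as Corollary~\ref{IGM on twisted cone}), and it then explains precisely why this does not yield the general statement.

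Your reduction of the bundled case to the unbundled case via Quantum Riemann--Roch is fine and matches the paper's treatment, and you correctly locate the core computation: the Givental--Martin modification of Brown's $I$-function is Oh's $I$-function (Proposition~\ref{brownoh}). However, you misjudge where the difficulty lies. You call the existence of the non-equivariant limit (Lemma~\ref{IGMexists}) the ``main obstacle'' and anticipate a delicate residue argument on quasimap moduli. In fact the paper disposes of this in a few lines: pairing each root $\alpha$ with $-\alpha$ shows that the dangerous factors in $W_\beta$ tend to $-1$ as $\lambda\to0$, leaving a Weyl anti-invariant numerator over the class $\omega$, which is well-defined after projecting via Martin's map. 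No localisation is needed.

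The genuine gap is the step you treat as routine. You assert that ``standard Givental-cone reconstruction (Birkhoff factorisation together with the Divisor Equation) then propagates the resulting statement from this one family to every Weyl-invariant family.'' This does not work: Brown's and Oh's $I$-functions are \emph{small} $I$-functions, depending only on parameters in $H^2$, and a small $I$-function does not determine the entire Lagrangian cone---Birkhoff factorisation recovers the $J$-function along the same small parameter space but cannot manufacture the missing directions. The paper states this explicitly in the discussion following Theorem~\ref{step three}: one would need \emph{big} $I$-functions in the sense of~\cite{CFK2016}, and upgrading Brown's $I$-function to a big one requires checking that it arises from torus localisation on a quasimap graph space, which is not carried out. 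Without that ingredient your propagation step fails, and the argument proves no more than the single-family result the paper already contains.
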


Let us now specialise to the case of partial flag bundles, as in \S\ref{notation} and the rest of the paper, so that $A \GIT G$ is a partial flag bundle $\Fl(E) \to X$ and $A \GIT T$ is a toric bundle $\Fl(E)_T \to X$. Theorem~\ref{brownoh AnA} below establishes the statement of Conjecture~\ref{AnA family} not for an arbitrary Weyl-invariant family $t \mapsto I(t)$ on $\cL_{A \GIT T}$, but for a specific such family called the \emph{Brown $I$-function}. As we recall in Theorems~\ref{ohI} and~\ref{brown2014gromov}, Brown and Oh have defined families $t \mapsto I_{\Fl(E)_T}(t)$ and $t \mapsto I_{\Fl(E)}(t)$, given in terms of genus-zero Gromov--Witten invariants of $X$ and explicit hypergeometric functions, and have shown~\cite{Brown2014, Oh2016} that $I_{\Fl(E)_T}(t) \in \cL_{\Fl(E)_T}$ and $I_{\Fl(E)}(t) \in \cL_{\Fl(E)}$. 

\begin{theorem}[see Proposition~\ref{brownoh} for details] \label{brintroh} \label{brownoh AnA}
	The Givental--Martin modification of the Brown $I$-function $t \mapsto I_{\Fl(E)_T}$ is $t \mapsto I_{\Fl(E)}(t)$.
\end{theorem}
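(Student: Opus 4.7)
The plan is to prove the identity by a direct term-by-term comparison of the explicit hypergeometric expressions for Brown's toric $I$-function $I_{\Fl(E)_T}(t)$ (Theorem~\ref{brown2014gromov}) and Oh's non-Abelian $I$-function $I_{\Fl(E)}(t)$ (Theorem~\ref{ohI}). The Givental--Martin modification is built from three ingredients---the Weyl factor $W_d$ from \eqref{modg}, the quotient map $p$ from \eqref{quotientH}, and the non-equivariant limit $\lambda \to 0$---and my task is to show that together they convert each summand of Brown's expression into the corresponding summand of Oh's.

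First I would fix coordinates on $\NE(\Fl(E)_T)$ adapted to the product-of-projective-bundles description of the Abelian fibre, so that a class $d$ is recorded as a tuple of integer degrees together with a class $\beta \in \NE(X)$. In these coordinates, each summand of $I_{\Fl(E)_T}$ is a product of simple ratios of the form $\prod_k (\chi + kz)^{\pm 1}$ indexed by the $T$-weights $\chi$ of the relevant bundles, while $W_d$ is the analogous product indexed by the roots of $G$. I would combine $W_d$ with the $d$-summand of $I_{\Fl(E)_T}$ and then regroup the resulting product: factors attached to the weights of the standard representation of $G$ should reproduce the weight-indexed factors of Oh's summand directly, while factors attached to roots combine into ratios whose $\lambda \to 0$ limit is precisely the Weyl-antisymmetrization factor appearing in Oh's formula. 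The $X$-dependent pieces---the descendant invariants of $X$ that appear in both formulae---match without modification, since $p$ acts as the identity on this portion.

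The main obstacle will be the treatment of the root factors as $\lambda \to 0$: one must verify that $W_d$ is designed with exactly the right poles and zeros to cancel the singular root-contributions in $I_{\Fl(E)_T}$, so that the combined expression extends analytically across $\lambda = 0$ and restricts under $p$ to the correct non-Abelian root factor. Analytic finiteness is already guaranteed by Lemma~\ref{IGMexists}, so the remaining task is an explicit algebraic identification---essentially the fibre-bundle analogue of the key computation in~\S6 of~\cite{CFKS2008}. Accompanying this is a Weyl-orbit bookkeeping step: I would check that Weyl-translates of a class $d \in \NE(\Fl(E)_T)$ all push forward to the same $\varrho(d) \in \NE(\Fl(E))$ and contribute identically under $p$, so that summing over each orbit collapses Brown's outer sum to a sum over $\NE(\Fl(E))$ that matches Oh's. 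Once these two points are in place, the comparison with Oh's formula reduces to a symbol-by-symbol match.
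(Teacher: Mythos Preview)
Your plan is essentially the paper's own argument (Proposition~\ref{brownoh}): write $I_{\Fl(E)_T}$ in explicit Novikov coordinates, multiply by the Weyl factor $W_{\tilde\beta}$, apply $p$, and let $\lambda \to 0$, matching Oh's formula term by term. One correction to your bookkeeping: the collapse of Brown's outer sum to Oh's is governed by the \emph{fibres} of $\varrho$---all $\underline{d}\in\ZZ^R$ with prescribed values of $\sum_j d_{i,j}$---not by Weyl orbits, which are strictly smaller; it is the Novikov-ring component of $p$ that identifies everything in a fibre, and this is what produces the inner constrained sum in Oh's formula.
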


\noindent The main result of this paper is the analogue of Theorem~\ref{brownoh AnA} for twisted Gromov--Witten invariants. We define a twisted version $t \mapsto I_{V^T_\mu}(t)$ of the Brown $I$-function and prove:

\begin{theorem}[see Definition~\ref{IGM definition special case} and Corollary~\ref{IGM on twisted cone} for details] \ \label{step three}
	\begin{enumerate}
		\item the twisted Brown $I$-function $t \mapsto I_{V^T_\mu}(t)$ is a Weyl-invariant family of elements of~$\cL_{V^T_\mu}$ that satisfies the Divisor Equation;
		\item the Givental--Martin modification $t \mapsto I_{\GM}(t)$ of this family satisfies $I_{\GM}(t) \in \cL_{V^G_\mu}$.
	\end{enumerate}
\end{theorem}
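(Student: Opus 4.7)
The plan is to deduce Theorem~\ref{step three} from Theorem~\ref{brintroh} together with a Quantum Riemann--Roch argument, using the key fact (established above in the proof that Conjectures~\ref{AnA} and~\ref{AnA bundles} are equivalent) that the projection of the operator $\Delta_{V^T_\mu}$ along the map~\eqref{quotientH} is $\Delta_{V^G_\mu}$.

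For (1), I would define $I_{V^T_\mu}(t)$ as Brown's $I$-function $I_{\Fl(E)_T}(t)$ modified summand-by-summand by the standard hypergeometric twist for $V^T$: for each $T$-weight $\chi$ of $V$, multiply the $d$-summand by $\prod_{m \le 0}(\mu + H_\chi + mz)/\prod_{m \le \langle H_\chi, d \rangle}(\mu + H_\chi + mz)$, where $H_\chi$ is the first Chern class of the line bundle on $\Fl(E)_T$ associated to $\chi$. Weyl invariance is immediate because the set of $T$-weights of $V$ is $W$-stable. The Divisor Equation is a standard recursion for the hypergeometric factor, combined with the Divisor Equation for Brown's function. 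To show $I_{V^T_\mu}(t) \in \cL_{V^T_\mu}$, I would verify directly that $I_{V^T_\mu}(t)$ differs from $\Delta_{V^T_\mu} I_{\Fl(E)_T}(t)$ by an invertible series in $z^{-1}$, which is absorbed using standard properties of the Lagrangian cone, and then invoke $\cL_{V^T_\mu} = \Delta_{V^T_\mu} \cL_{\Fl(E)_T}$ by Quantum Riemann--Roch~\cite{CoatesGivental2007}.

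For (2), observe that the twist factor $M^{V^T_\mu}_d$ is $\lambda$-independent and Weyl invariant. So, writing the summands of the Weyl modification as $W_d \, M^{V^T_\mu}_d \, I_d(t)$, the Weyl factor $W_d$ and the $\lambda \to 0$ limit act only on $I_d(t)$. By Theorem~\ref{brintroh},
\[
\lim_{\lambda \to 0} p\!\left(\sum_d Q^d W_d I_d(t)\right) = I_{\Fl(E)}(t).
\]
Martin's theorem identifies $p(M^{V^T_\mu}_d) = M^{V^G_\mu}_{\varrho(d)}$, since $V^G$ pulled back from $\Fl(E)$ to $\Fl(E)_T$ splits as the direct sum of line bundles indexed by the $T$-weights of $V$. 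Carrying the twist through these identifications, $I_{\GM}(t)$ equals $\Delta_{V^G_\mu} I_{\Fl(E)}(t)$ up to the same $z^{-1}$-invertible correction, and Oh's theorem together with $\cL_{V^G_\mu} = \Delta_{V^G_\mu} \cL_{\Fl(E)}$ places $I_{\GM}(t)$ on $\cL_{V^G_\mu}$.

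The main technical obstacle is the precise identification $p(M^{V^T_\mu}_d) = M^{V^G_\mu}_{\varrho(d)}$ and the compatibility between the $\lambda \to 0$ limit and the projection~$p$ when both $W_d$ and $M^{V^T_\mu}_d$ are present. Although Martin's theorem provides the cohomological input, one must check that the product over $T$-weights of the hypergeometric factor, after Weyl averaging and combined with $W_d$, matches the Chern-root hypergeometric expression on $\Fl(E)$, and that no residual $\lambda$-poles conspire with $\mu$-residues to alter the limit. This bookkeeping parallels the untwisted case of Theorem~\ref{brintroh}, but is more intricate, and provides the twisted analogue of Lemma~\ref{IGMexists}.
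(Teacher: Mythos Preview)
Your overall strategy matches the paper's: use Quantum Riemann--Roch, the fact that $p$ carries $\Delta_{V^T_\mu}$ to $\Delta_{V^G_\mu}$, and Theorem~\ref{brintroh} (the Givental--Martin modification of Brown's $I$-function is Oh's). However, there is a genuine imprecision in your mechanism for part~(1). You write that $I_{V^T_\mu}$ ``differs from $\Delta_{V^T_\mu} I_{\Fl(E)_T}$ by an invertible series in $z^{-1}$''. This is not correct: the relation is $I_{V^T_\mu} = \Delta_{V^T_\mu}\bigl(D_{V^T_\mu}(I_{\Fl(E)_T})\bigr)$, where $D_{V^T_\mu} = \prod_i \exp\!\big({-G}(z\nabla_{f_i},z)\big)$ is a \emph{differential} operator, not multiplication by a fixed series (Proposition~\ref{twisted=Deltad}). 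Under the Divisor Equation it acts on the degree-$\beta$ summand as multiplication by $\prod_i \exp\!\big({-G}(f_i - \langle f_i,\beta\rangle z,\,z)\big)$, which is $\beta$-dependent and involves positive powers of $z$. That $D_{V^T_\mu}(I_{\Fl(E)_T})$ still lies on $\cL_{\Fl(E)_T}$ is the content of Lemma~\ref{(3)} and genuinely uses the cone's overruled geometry (that $z\nabla$ preserves tangent spaces); it is not mere absorption of a $z^{-1}$-series.

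This same operator identity is what makes part~(2) clean and eliminates the bookkeeping you worry about in your final paragraph. The paper reverses the order of the two twists, writing $I_{\Phi_\lambda \oplus V^T_\mu} = \Delta_{V^T_\mu}\bigl(D_{V^T_\mu}(I_{\Phi_\lambda})\bigr)$. Since $\Delta_{V^T_\mu}$ and $D_{V^T_\mu}$ are $\lambda$-independent and project under $p$ to $\Delta_{V^G_\mu}$ and $D_{V^G_\mu}$, one may project and pass to $\lambda\to 0$ on the inner factor alone, obtaining directly from Proposition~\ref{brownoh} that $I_{\GM} = \Delta_{V^G_\mu}\bigl(D_{V^G_\mu}(I_{\Fl(E)})\bigr)$. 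No separate check that $W_d$ and $M^{V^T_\mu}_d$ interact well in the limit, nor an explicit identification $p(M^{V^T_\mu}_d) = M^{V^G_\mu}_{\varrho(d)}$, is required; the conclusion follows from Lemma~\ref{(3)} on $\Fl(E)$ together with Oh's theorem and Quantum Riemann--Roch.
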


\noindent This establishes the statement of Conjecture~\ref{AnA bundles family}, not for an arbitrary Weyl-invariant family, but for the specific such family $t \mapsto I_{V^T_\mu}(t)$. Theorem~\ref{step three} follows from the Quantum Riemann--Roch theorem~\cite{CoatesGivental2007} together with the results of Brown~\cite{Brown2014} and Oh~\cite{Oh2016}, using a ``twisting the $I$-function'' argument as in~\cite{CCIT2019}.

As we will now explain, Theorem~\ref{brownoh AnA} is quite close to a proof of Conjecture~\ref{AnA family} in the flag bundle case, and similarly Theorem~\ref{step three} is close to a proof of Conjecture~\ref{AnA bundles family}. We will discuss only the former, as the latter is very similar. Theorem~\ref{brownoh AnA} implies that
\begin{equation}
	\label{AnA intermediate}
	\text{the Givental--Martin modification $t \mapsto I_{\GM}(t)$ lies in $\cL_{\Fl(E)}$}
\end{equation}
for the family $t \mapsto I(t)$ given by the Brown I-function, because the Givental--Martin modification of the Brown $I$-function is the Oh $I$-function $t \mapsto I_{\Fl(E)}(t)$. If Oh's $I$-function were a \emph{big $I$-function}, in the sense of~\cite{CFK2016}, then Conjecture~\ref{AnA family} would follow. The special geometric properties of the Lagrangian submanifold $\cL_Y$ described in~\cite{Givental2004} and~\cite[Appendix B]{CCIT2009Computing}, taking $Y = \Fl(E)$, would then imply that any family $t \mapsto I(t)$ such that $I(t) \in \cL_{\Fl(E)}$ can be written as
\begin{equation}
	\label{special form}
	I(t) = I_{\Fl(E)}(\tau(t)) + \sum_\alpha C_\alpha(t, z) z \frac{\partial I_{\Fl(E)}}{\partial \tau_\alpha}(\tau(t))
\end{equation}
for some coefficients $C_\alpha(t, z)$ that depend polynomially on $z$ and some change of variables $t \mapsto \tau(t)$. Furthermore the same geometric properties imply that any family of the form \eqref{special form} satisfies $I(t) \in \cL_{\Fl(E)}$. But $\cL_{\GM}$ has the same special geometric properties as $\cL_Y$ -- it inherits them from the Weyl-invariant part of $\cL_{\Phi_\lambda}$ by projection along \eqref{quotientH} followed by taking the non-equivariant limit -- and so if $t \mapsto I_{\Fl(E)}$ is a big $I$-function then any family of elements $t \mapsto I^\dagger(t)$ on $\cL_{\GM}$ can be written as 
\begin{equation*}
	I^\dagger(t) = I_{\Fl(E)}(\tau^\dagger(t)) + \sum_\alpha C^\dagger_\alpha(t, z) z \frac{\partial I_{\Fl(E)}}{\partial \tau_\alpha}(\tau^\dagger(t))
\end{equation*}
That is, $I^\dagger(t)$ can be written in the form \eqref{special form}. It follows that $I^\dagger(t) \in \cL_{\Fl(E)}$.  Applying this with $I^\dagger = I_{\GM}$ from Conjecture~\ref{AnA family} proves that Conjecture; note that we know that the family $t \mapsto I_{\GM}(t)$ here lies in $\cL_{\GM}$ by Corollary~\ref{IGMonLGM}.

If the Brown and Oh $I$-functions were big $I$-functions then Theorem~\ref{brownoh AnA} would continue to hold (with the same proof) and Conjecture~\ref{AnA family} would therefore follow. In reality the Brown and Oh $I$-functions are only small $I$-functions, not big $I$-functions, but Ciocan-Fontanine--Kim have explained in \cite[\S5]{CFK2016} how to pass from small $I$-functions to big $I$-functions, whenever the target space is the GIT quotient of a vector space. To apply their argument, and hence prove Conjecture~\ref{AnA family} for partial flag bundles, one would need to check that the Brown $I$-function arises from torus localization on an appropriate quasimap graph space~\cite[\S7.2]{CFKM2014}. The analogous result for the Oh $I$-function is~\cite[Proposition~5.1]{Oh2016}. 

Webb has proved a `big $I$-function' version of the Abelian/non-Abelian Correspondence for target spaces that are GIT quotients of vector spaces~\cite{Webb2018}, and this immediately implies Conjectures~\ref{AnA family} and~\ref{AnA bundles family}.

\begin{proposition}
    Conjecture~\ref{AnA family} holds when $A$ is a vector space and $G$ acts on $A$ via a representation $G \mapsto \GL(A)$.
\end{proposition}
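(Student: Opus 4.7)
The plan is to combine Webb's theorem with the cone-reconstruction argument sketched just above the proposition. Webb's theorem~\cite{Webb2018} provides a big $I$-function version of the Abelian/non-Abelian Correspondence in the vector-space setting: there is a Weyl-invariant family on $\cL_{A \GIT T}$, satisfying the Divisor Equation, whose Givental--Martin modification is a big $I$-function $I^{\rmbig}_{\GM}(\tau) \in \cL_{A \GIT G}$ in the sense of~\cite{CFK2016}.

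Next, I would invoke the ruling structure of Givental's Lagrangian cones $\cL_Y$ from~\cite{Givental2004, CCIT2009Computing}: given a big $I$-function for $A \GIT G$, every family of elements $F(t) \in \cL_{A \GIT G}$ admits a decomposition
$$ F(t) = I^{\rmbig}_{\GM}\bigl(\tau(t)\bigr) + \sum_\alpha C_\alpha(t,z)\, z\, \frac{\partial I^{\rmbig}_{\GM}}{\partial \tau_\alpha}\bigl(\tau(t)\bigr) $$
for coefficients $C_\alpha$ polynomial in $z$ and a change of variables $\tau(t)$. The paragraphs preceding the proposition explain that $\cL_{\GM}$ inherits exactly this same ruling structure from the Weyl-invariant part of $\cL_{\Phi_\lambda}$, by projection along~\eqref{quotientH} followed by the non-equivariant limit $\lambda \to 0$. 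Consequently any family of elements of $\cL_{\GM}$ can be written in the same special form, and since $I^{\rmbig}_{\GM}(\tau) \in \cL_{A \GIT G}$ by Webb's theorem, every such family also lies on $\cL_{A \GIT G}$.

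Finally, take an arbitrary Weyl-invariant family $t \mapsto I(t) \in \cL_{A \GIT T}$ satisfying the Divisor Equation. By Corollary~\ref{IGMonLGM} (applied with $F'=0$) its Givental--Martin modification $t \mapsto I_{\GM}(t)$ is a family of elements of $\cL_{\GM}$; the previous step then yields $I_{\GM}(t) \in \cL_{A \GIT G}$, which is precisely Conjecture~\ref{AnA family}.

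The main obstacle is the inheritance step: one must verify carefully that $\cL_{\GM}$ really is a Givental-type cone, i.e.\ that the tangent-space spanning property which characterises such cones survives both the projection $p$ along~\eqref{quotientH} and the non-equivariant limit $\lambda \to 0$. Once this is in place, the result is essentially a formal consequence of Webb's theorem and Corollary~\ref{IGMonLGM}; no further Gromov--Witten computation is needed.
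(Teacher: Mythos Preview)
Your proposal is correct and follows essentially the same route as the paper: the paper's proof cites Webb together with \cite{CFK2016} to obtain big $I$-functions $I_{A \GIT T}$ and $I_{A \GIT G}$ with the Givental--Martin modification of the Weyl-invariant part of the former equal to the latter, and then says ``Now argue as above,'' which is precisely the cone-reconstruction argument you have spelled out in detail.
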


\begin{proof}
    Combining \cite[Corollary~6.3.1]{Webb2018} with \cite[Theorem~3.3]{CFK2016} shows that there are big $I$-functions $t \mapsto I_{A \GIT T}(t)$ and $t \mapsto I_{A \GIT G}(t)$ such that $I_{A \GIT T}(t) \in \cL_{A \GIT T}$ and $I_{A \GIT G}(t) \in \cL_{A \GIT G}$. Furthermore it is clear from \cite[equation 62]{Webb2018} that the Givental--Martin modification of the Weyl-invariant part of $t \mapsto I_{A \GIT T}(t)$ is $t \mapsto I_{A \GIT G}(t)$. Now argue as above.
\end{proof}

\subsection*{Connection to Earlier Work} Our formulation of the Abelian/non-Abelian Correspondence very roughly says that, for genus-zero Gromov--Witten theory, passing from an Abelian quotient $A \GIT T$ to the corresponding non-Abelian quotient $A \GIT G$ is almost the same as twisting by the non-convex bundle $\Phi \to A \GIT T$ defined by the roots of $G$. This idea goes back to the earliest work on the subject, by Bertram--Ciocan-Fontanine--Kim, and indeed our Conjecture is very much in the spirit of the discussion in~\cite[\S4]{BCFK2008}. These ideas were given a precise form in~\cite{CFKS2008}, in terms of Frobenius manifolds and Saito's period mapping; the main difference with the approach that we take here is that in~\cite{CFKS2008} the authors realise the cohomology $H^\bullet(A \GIT G)$ as the Weyl-anti-invariant subalgebra of the cohomology of the Abelian quotient $A \GIT T$, whereas we realise it as a quotient of the Weyl-invariant part of $H^\bullet(A\GIT T)$. The latter approach seems to fit better with Givental's formalism.

Ruan was the first to realise that there is a close connection between quantum cohomology (or more generally Gromov--Witten theory) and birational geometry~\cite{Ruan1999}, and the change in Gromov--Witten invariants under blow-up forms an important testing ground for these ideas. Despite the importance of the topic, however, Gromov--Witten invariants of blow-ups have been understood in rather few situations. Early work here focussed on blow-ups in points, and on exploiting structural properties of quantum cohomology such as the WDVV equations and Reconstruction Theorems~\cite{Gathmann1996, GottschePandharipande1998, Gathmann2001}. Subsequent approaches used symplectic methods pioneered by Li--Ruan~\cite{LiRuan2001,HuLiRuan2008,Hu2000,Hu2001}, or the Degeneration Formula following Maulik--Pandharipande~\cite{MaulikPandharipande2006,HeHuKeQi2018,ChenDuWang2020}, or a direct analysis of the moduli spaces involved and virtual birationality arguments~\cite{Manolache2012,Lai2009,AbramovichWise2018}. In each case the aim was to prove `birational invariance': that certain specific Gromov--Witten invariants remain invariant under blow-up. We take a different approach. Rather than deform the target space, or study the geometry of moduli spaces of stable maps explicitly, we give an elementary construction of the blow-up $\tilde{X} \to X$ in terms that are compatible with modern tools for computing Gromov--Witten invariants, and extend these tools so that they cover the cases we need. This idea -- of reworking classical constructions in birational geometry to make them amenable to computations using Givental formalism -- was pioneered in \cite{CCGK16}, and indeed Lemma~E.1 there gives the codimension-two case of our Theorem~\ref{step one}.

Compared to explicit invariance statements
\[
    \langle \pi^*\phi_{i_1}, \ldots, \pi^*\phi_{i_n} \rangle^{\tilde{X}}_{0,n,\pi^! \beta} =\langle \phi_{i_1}, \ldots, \phi_{i_n} \rangle^{X}_{0,n,\beta}
\]
as in \cite[Theorem 1.4]{Lai2009},
we pay a price for our increased abstraction: the range of invariants for which we can extract closed-form expressions is different (see Corollary~\ref{I=J}) and in general does not overlap with Lai's. But we also gain a lot by taking a more structural approach: our results determine, via a Birkhoff factorization procedure as in~\cite{CoatesGivental2007, CFK2014}, genus-zero Gromov--Witten invariants of the blow-up $\tilde{X}$ for curves of arbitrary degree (not just proper transforms of curves in the base) and with a wide range of insertions that can include gravitional descendant classes. See Remark~\ref{what can we compute blow up}. Furthermore in general one should not expect Gromov--Witten invariants to remain invariant under blow-ups. The correct statement -- cf.~Ruan's Crepant Resolution Conjecture~\cite{CCIT2009WallCrossings, CoatesRuan, Iritani2008, Iritani2009} and its generalisation by Iritani~\cite{Iritani2020} -- is believed to involve analytic continuation of Givental cones, and we hope that our formulation here will be a step towards this.

After the first version of this paper appeared on the arXiv, Fenglong You pointed us to the work~\cite{LeeLinWang2017} in which Lee, Lin, and Wang sketch a construction of blow-ups that is very similar to Theorem~\ref{step one}, and use this to compute Gromov--Witten invariants of blow-ups in complete intersections. The methods they use are different: they rely on a very interesting extension of the Quantum Lefschetz theorem to certain non-split bundles, which they will prove in forthcoming work~\cite{LeeLinWangForthcoming}. At first sight, their result~\cite[Theorem 5.1]{LeeLinWang2017} is both more general and less explicit than our results. 
In fact, we believe neither is true. Their theorem as stated applies to blow-ups in complete intersections defined by arbitrary line bundles whereas we require these line bundles to be convex; however, discussions with the authors suggest that both results apply under the same conditions, and the convexity hypothesis was omitted from~\cite[Theorem~5.1]{LeeLinWang2017} in error. Furthermore, Lee, Lin, and Wang extract genus-zero Gromov--Witten invariants by combining their generalised Quantum Lefschetz theorem with an inexplicit Birkhoff factorisation procedure whereas we use the formalism of Givental cones. We believe, though, that one can rephrase their argument entirely in terms of Givental's formalism, and after doing so their results become explicit in exactly the same range as ours. The explicit formulas are different, however, and it would be interesting to see if one can derive non-trivial identities from this. 
Note that Proposition~\ref{geometricconstruction} below is more general than the construction in~\cite[Section~5]{LeeLinWang2017}: the fact that we consider Grassmann bundles rather than projective bundles allows us to treat blow-ups in certain degeneracy loci. Combining this with the methods in Section~\ref{examples} allows one to compute genus-zero Gromov--Witten invariants of blow-ups in such degeneracy loci.

 One of the most striking features of Givental's formalism is that relationships between higher-genus Gromov--Witten invariants of different spaces can often be expressed as the quantisation, in a precise sense, of the corresponding relationship between the Lagrangian cones that encode genus-zero invariants~\cite{Givental2004}. Our version of the Abelian/non-Abelian Correspondence hints, therefore, at a higher-genus generalisation. It would be very interesting to develop and prove a higher-genus analog of Conjecture~\ref{AnA}.

\subsection*{Acknowledgements} TC was supported by ERC Consolidator Grant 682603 and  EPSRC Programme Grant EP/N03189X/1. WL and QS were supported by the EPSRC Centre for Doctoral Training in Geometry and Number Theory at the Interface, grant number EP/L015234/1. 
We thank Rachel Webb for helpful comments on an earlier draft, and Fenglong You for pointing us to~\cite{LeeLinWang2017}. TC thanks Ionu\c{t} Ciocan-Fontanine, Alessio Corti, Elana Kalashnikov, and Yuan-Pin Lee for a number of enlightening discussions.

\section{GIT Quotients and Flag Bundles}

\subsection{The topology of quotients by a non-Abelian group and its maximal torus}\label{topology}

Let $G$ be a complex reductive group acting on a smooth quasi-projective variety $A$ with polarisation given by a linearised ample line bundle $L$. Let $T \subset G$ be a maximal torus. One can then form the GIT-quotients $A \GIT G$ and $A \GIT T$. We will assume that the stable and semistable points with respect to these linearisations coincide, and that all the isotropy groups of the stable points are trivial; this ensures that the quotients $A \GIT G$ and $A \GIT T$ are smooth projective varieties. The Abelian/non-Abelian Correspondence \cite{CFKS2008} relates the genus zero Gromov--Witten invariants of these two quotients. Let $A^{s}(G)$, and respectively $A^s(T)$, denote the subsets of $A$ consisting of points that are stable for the action of $G$, and respectively $T$. The two geometric quotients $A \GIT G$ and $A \GIT T$ fit into a diagram
\begin{equation}
	\label{vbongit}
	\begin{tikzcd}
		A \GIT T & A^{s}(G)/T \arrow[d, "q"] \arrow[l, "j"', hook'] \\
		& A \GIT G                                               
	\end{tikzcd}
\end{equation}
where $j$ is the natural inclusion and $\pi$ the natural projection.

A representation $\rho \colon G \to \GL(V)$ induces a vector bundle $V(\rho)$ on $A \GIT G$ with fiber $V$. Explicitly, $V(\rho)=(A\times V)\GIT G$ where $G$ acts as  
\[
g\colon (a,v) \mapsto (ag, \rho(g^{-1}) v).
\]
Similarly, the restriction $\rho|_T$ of the representation $\rho$ induces a vector bundle $V(\rho|_T)$ over $A \GIT T$. Note that since $T$ is Abelian, $V(\rho|_T)$ splits as a direct sum of line bundles, $V(\rho|_T)=L_1 \oplus \dots \oplus L_k$ 
These bundles satisfy 
\begin{equation}\label{(21)}
	j^*V({\rho |_T}) \cong q^*V(\rho).
\end{equation}
When the representation $\rho\colon G \rightarrow \GL(V)$ is clear from context, we will suppress it from the notation, writing $V^G$ for $V(\rho)$ and $V^T$ for $V(\rho|_T)$. 

We will now describe the relationship between the cohomology rings of $A \GIT G$ and $A \GIT T$, following \cite{Martin2000}. Let $W$ be the Weyl group of $G$. $W$ acts on $A \GIT T$ and hence on the cohomology ring $H^\bullet(A \GIT T)$. Restricting the adjoint representation $\rho \colon G \to \GL(\mathfrak{g})$ to $T$, we obtain a splitting $\rho|_T=\oplus_{\alpha} \rho_\alpha$ into $1$-dimensional representations, i.e.~characters, of $T$. The set $\Delta$ of characters appearing in this decomposition is the set of roots of $G$, and forms a root system. Write $L_\alpha$ for the line bundle on $A \GIT T$ corresponding to a root $\alpha$. Fix a set of positive roots $\Phi^+$ and define 
\[
	\omega=\prod_{\alpha \in \Phi^+} c_1(L_\alpha).
\]
\begin{theorem}[Martin]
	\label{thm:Martin}
	There is a natural ring homomorphism
	\[
		H^{\bullet}(A \GIT G) \cong\frac{ H^\bullet(A \GIT T)^W}{ \Ann(\omega)}
	\]
	under which $x \in H^\bullet(A \GIT G)$ maps to $\tilde{x} \in H^\bullet(A \GIT T)$ if and only if $q^*x=j^*\tilde{x}$.
\end{theorem}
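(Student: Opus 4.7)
My approach is via equivariant cohomology. Since $T$ acts freely on both $A^s(T)$ and $A^s(G)$, one identifies $H^\bullet(A\GIT T) \cong H^\bullet_T(A^s(T))$ and $H^\bullet(A^s(G)/T) \cong H^\bullet_T(A^s(G))$ compatibly with the $W$-action. Borel's theorem for connected reductive $G$ gives $H^\bullet_G(X;\QQ) \cong H^\bullet_T(X;\QQ)^W$ for any $G$-space $X$; applied to $X = A^s(G)$, and combined with the Kirwan identification $H^\bullet(A\GIT G) \cong H^\bullet_G(A^s(G))$, this shows that $q^*$ is injective with image exactly $H^\bullet(A^s(G)/T)^W$. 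So the theorem reduces to studying the map $j^*$ restricted to $W$-invariants.

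Next I would show that $j^*\colon H^\bullet(A\GIT T) \to H^\bullet(A^s(G)/T)$ is surjective. For this I would invoke the Kirwan--Ness stratification of the complement $A^s(T) \setminus A^s(G)$ as a disjoint union of locally closed, $T$-invariant Morse strata of positive codimension; a Thom--Gysin argument on this stratification produces surjectivity of $j^*$. Since the stratification is permuted by $W$ and $W$-invariants are exact in characteristic zero, this restricts to a surjection $H^\bullet(A\GIT T)^W \twoheadrightarrow H^\bullet(A^s(G)/T)^W \cong H^\bullet(A\GIT G)$.

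The main obstacle is identifying the kernel of this surjection with $\Ann(\omega)$. One checks first that $\omega$ transforms by the sign character of $W$, since each simple reflection $s_\alpha$ sends $\alpha \mapsto -\alpha$ and permutes the remaining positive roots; hence multiplication by $\omega$ interchanges the $W$-invariant and $W$-anti-invariant parts of $H^\bullet(A\GIT T)$. Classically, the restriction of $\omega$ to each fiber of $q \colon A^s(G)/T \to A\GIT G$ is a non-zero multiple of the top class of $G/T$ (via the Borel presentation $H^\bullet(G/T) \cong \QQ[\mathfrak{t}^*]/\QQ[\mathfrak{t}^*]^W_+$), and together with Leray--Hirsch this makes the bilinear form $(\eta,\zeta) \mapsto q_*(\eta\cdot\omega\cdot\zeta)$ non-degenerate on $H^\bullet(A^s(G)/T)^W$. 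A $W$-averaging argument using this non-degeneracy then yields $\ker\bigl(j^*|_{H^\bullet(A\GIT T)^W}\bigr) = \Ann(\omega) \cap H^\bullet(A\GIT T)^W$.

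Finally the characterisation via $q^*x = j^*\tilde x$ is automatic from the construction: the isomorphism factors as $H^\bullet(A\GIT T)^W / \Ann(\omega) \xrightarrow{\,j^*\,} H^\bullet(A^s(G)/T)^W \xleftarrow{\,q^*,\,\cong\,} H^\bullet(A\GIT G)$, so $\tilde x \in H^\bullet(A\GIT T)^W$ represents $x$ iff $j^*\tilde x = q^*x$. The hardest step is the kernel computation in the third paragraph: passing from the topological description of $\ker j^*$ to the algebraic one in terms of $\Ann(\omega)$ rests on the non-degeneracy of the $\omega$-modified pairing, which ultimately encodes the classical fact that $H^\bullet(G/T)^W \cong \QQ$ with $\omega$ providing the top-degree fundamental class.
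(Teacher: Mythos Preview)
The paper does not prove this theorem; it is quoted from Martin \cite{Martin2000} and used as a black box, so there is no in-paper proof to compare against.

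Your outline is close in spirit to Martin's original argument, which likewise rests on the identification of $q^*$ as an isomorphism onto $W$-invariants and on the anti-invariance of $\omega$. The one place your route diverges is the surjectivity of $j^*$. Invoking a Kirwan--Ness stratification of $A^s(T)\setminus A^s(G)$ is not quite standard: the usual stratification lives in $A\setminus A^{ss}(G)$, and you would need to check that the strata meet the open set $A^s(T)$ cleanly enough for the Thom--Gysin long exact sequences to still yield surjectivity. Martin bypasses this entirely: he first proves the integration formula $\int_{A\GIT G} x = \tfrac{1}{|W|}\int_{A\GIT T}\tilde x\cup\omega^2$ (or rather its polarised form), and then deduces the ring isomorphism from it via Poincar\'e duality. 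The pairing $(\tilde x,\tilde y)\mapsto \int_{A\GIT T}\tilde x\cup\omega\cup\tilde y$ on $H^\bullet(A\GIT T)^W$ has radical exactly $\Ann(\omega)$, and the integration formula identifies the induced non-degenerate pairing on the quotient with the Poincar\'e pairing on $H^\bullet(A\GIT G)$; surjectivity then falls out of the equality of dimensions. Your third paragraph is essentially this same non-degeneracy argument, so the overall architecture is correct --- but in Martin's treatment the integration formula does double duty, making the separate surjectivity step unnecessary.
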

\noindent Theorem~\ref{thm:Martin} shows that any cohomology class $\tilde{x}\in H^\bullet(A \GIT T)^W$ is a lift of a class ${x} \in H^\bullet(A \GIT G)$, with $\tilde{x}$ unique up to an element of $\mathrm{Ann}(\omega)$.

\begin{assumption}
Throughout this paper, we will assume that the $G$-unstable locus $A \setminus A^s(G)$ has codimension at least $2$.
\end{assumption}

This implies that elements of $H^2(A \GIT G)$ can be lifted uniquely:
\begin{proposition}\label{maponmori}
Pullback via $q$ gives an isomorphism $H^2(A \GIT G) \cong H^2(A \GIT T)^W$, and induces a map $\varrho \colon \NE(A \GIT T) \rightarrow \NE (A \GIT G)$ where $\NE$ denotes the Mori cone. 
\end{proposition}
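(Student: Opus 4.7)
The plan is to establish the cohomological isomorphism first, then dualise to define $\varrho$ and verify that it preserves effectivity.

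For the isomorphism $H^2(A\GIT G, \QQ) \cong H^2(A\GIT T, \QQ)^W$, I would argue geometrically from the diagram~\eqref{vbongit}. The codimension condition on $A \setminus A^s(G)$ transfers, via the smooth surjection $A^s(T) \to A\GIT T$, to the statement that the complement $A\GIT T \setminus A^s(G)/T$ has codimension at least $2$ in $A\GIT T$; by standard purity of cohomology for smooth varieties, $j^*$ is therefore a Weyl-equivariant isomorphism in degree $2$. Meanwhile $q$ is an \'etale-locally-trivial fibration with fiber $G/T$, so a Leray--Hirsch argument identifies $H^\bullet(A^s(G)/T, \QQ) \cong H^\bullet(A\GIT G, \QQ) \otimes H^\bullet(G/T, \QQ)$ as $W$-modules, with $W$ acting only on the second tensor factor. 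Chevalley's theorem gives $H^{>0}(G/T, \QQ)^W = 0$, so $q^*$ identifies $H^2(A\GIT G, \QQ)$ with $H^2(A^s(G)/T, \QQ)^W$, and composing with $(j^*)^{-1}$ yields the desired isomorphism --- which coincides with the map $x \mapsto \tilde x$ of Theorem~\ref{thm:Martin}. Equivalently, this amounts to checking $\Ann(\omega) \cap H^2(A\GIT T, \QQ)^W = 0$ directly.

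To define $\varrho$, I would dualise the above isomorphism to obtain $H_2(A\GIT G, \QQ) \cong H_2(A\GIT T, \QQ)_W$ and let $\varrho$ be the composition of the natural projection $H_2(A\GIT T, \QQ) \to H_2(A\GIT T, \QQ)_W$ with this isomorphism. For effectivity, consider first an irreducible curve $C \subseteq A\GIT T$ that meets $U := A^s(G)/T$: let $\nu \colon \tilde C \to C$ be the normalisation and $\tilde C^\circ := \nu^{-1}(C \cap U)$. The complement $\tilde C \setminus \tilde C^\circ$ is finite, so the morphism $q \circ \nu|_{\tilde C^\circ} \colon \tilde C^\circ \to A\GIT G$ extends uniquely to $\tilde q \colon \tilde C \to A\GIT G$ by the valuative criterion (using properness of $A\GIT G$). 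A direct pairing computation using $q^* x = j^* \tilde x$ then shows $\varrho[C] = \tilde q_*[\tilde C]$, which is effective.

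The main obstacle is handling an irreducible curve $C$ contained entirely in the complement $A\GIT T \setminus U$: the construction above does not apply, yet such curves can exist because the complement has codimension only $2$ in general. The cleanest remedy I see is to show that lifts of nef classes on $A\GIT G$ remain nef on $A\GIT T$, for then $\langle x, \varrho[C]\rangle = \langle \tilde x, [C] \rangle = \int_C \tilde x \geq 0$ for every such $C$ and every nef $x \in H^2(A\GIT G)$, which places $\varrho[C] \in \overline{\NE}(A\GIT G)$ by Kleiman's criterion. The nef-to-nef statement ought to follow from the fact that the polarising ample line bundle $L$ on $A$ descends compatibly to ample bundles on both $A\GIT T$ and $A\GIT G$ whose classes correspond under the isomorphism, together with the density of ample classes in the nef cone, but making this rigorous in the generality of the proposition is the delicate point of the argument.
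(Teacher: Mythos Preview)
Your argument for the $H^2$ isomorphism is essentially the paper's: the codimension~$\geq 2$ assumption makes $j^*$ an isomorphism on $\Pic$ (hence on $H^2$ via the cycle class map), and a Borel/Leray--Hirsch argument shows $q^*$ identifies $H^2(A\GIT G)$ with $H^2(A^s(G)/T)^W$. The paper cites Borel directly rather than unpacking the Chevalley step, but the content is the same.

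For $\varrho$, the paper bypasses your curve-extension construction entirely and goes straight to what you call your ``cleanest remedy'': it identifies the lift with $q^*$ (composed with the inverse of $j^*$ on Picard groups), observes that $q$ is proper since its fibre $G/T$ is projective, invokes ``pullback of nef along a proper map is nef'', and concludes by duality. So your valuative-criterion paragraph is an unnecessary detour; you should lead with the nef-duality argument. The subtlety you flag --- that nefness of $q^*x$ on the open set $A^s(G)/T$ must be promoted to nefness of $\tilde{x}$ on all of $A\GIT T$, where the complement may still contain complete curves --- is real, and the paper's one-line proof does not address it explicitly either. Your proposed justification via compatible descents of the linearising ample bundle $L$ is the right direction: the key point is that ample $G$-linearised bundles on $A$ descend to ample bundles on both quotients, and these descents correspond under the lift, so the isomorphism takes the ample cone of $A\GIT G$ into the ample cone of $A\GIT T$; passing to closures handles nef classes.
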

\begin{proof}
The assumption that $A \setminus A^s(G)$ has codimension at least $2$ implies that $A^s(T)/T \setminus A^s(G)/T$ has codimension at least $2$, so $j$ induces an isomorphism $\Pic(A^s(G)/T) \cong \Pic(A^s(T)/T)$. This gives an isomorphism $H^2(A^s(G)/T) \cong H^2(A^s(T)/T)$ since the cycle class map is an isomorphism for both spaces. Since $q^*$ always induces an isomorphism between $H^2(A \GIT G)$ and $H^2(A^s(G)/T)^W$ \cite{Borel1953}, the first claim follows. Consequently, the lifting of divisor classes is unique and can be identified with the pullback map $q^* \colon \Pic(A \GIT G) \rightarrow \Pic(A^s(G)/T)$. Since the pullback of a nef divisor class along a proper map is nef, we obtain by duality a map $\varrho: \NE(A\GIT T) \rightarrow \NE(A \GIT G)$.
\end{proof}
\begin{definition}
We say that $\tilde{\beta} \in \NE(A\GIT T)$ lifts $\beta \in \NE(A \GIT G)$ if $\varrho(\tilde{\beta}) = \beta$. Note that any effective $\beta$ has finitely many lifts. 
\end{definition}
\subsection{Partial flag varieties and partial flag bundles}\label{flag}
\subsubsection{Notation}\label{notation}
We will now specialise to the case of flag bundles and introduce notation used in the rest of the paper. Fix once and for all:
\label{setup}
\begin{itemize}
\item  a positive integer $n$ and a sequence of positive integers $r_1 < \dots < r_{\ell} < r_{\ell+1}=n$;
\item a vector bundle $E \rightarrow X$ of rank $n$ on a smooth projective variety $X$ which splits as a direct sum of line bundles $E=L_1 \oplus \dots \oplus L_n$.
\end{itemize}
We write $\Fl$ for the partial flag manifold $\Fl(r_1, \dots, r_{\ell};n)$, and $\Fl(E)$ for the partial flag bundle $\Fl(r_1, \dots, r_{\ell};E)$. 

Set $N=\sum_{i=1}^\ell r_i r_{i+1}$ and $R=r_1+\dots+r_\ell$
It will be convenient to use the indexing $\{(1,1), \dots (1, r_1), (2, 1), \dots, (\ell, r_\ell)\}$ for the set of positive integers smaller or equal than $R$.  

\subsubsection{Partial flag varieties and partial flag bundles as GIT quotients}
The partial flag manifold $\Fl$ arises as a GIT quotient, as follows. Consider $\CC^N$ as the space of homomorphisms
\begin{equation}
	\label{eq:hom}
	\bigoplus_{i=1}^\ell \Hom\left(\CC^{r_{i}}, \CC^{r_{i+1}}\right).
\end{equation}
The group $G = \prod_{i=1}^{\ell} \mathrm{GL}_{r_i}(\CC)$ acts on $\CC^N$ by 
\[
	(g_1, \dots, g_{\ell}) \cdot (A_1, \dots, A_{\ell}) = (g_2^{-1}  A_1 g_1, \dots,g_{\ell}^{-1}A_{\ell -1}g_{\ell-1}, A_{\ell}g_\ell).
\] 
Let $\rho_i \colon  G \rightarrow \GL_{r_i}(\CC)$ be the representation which is the identity on the $i$th factor and trivial on all other factors. Choosing the linearisation $\chi=\bigotimes_{i=1}^{\ell} \det(\rho_i)$,
we have that $\CC^N \GIT_\chi G$ is the partial flag manifold $\Fl$.
More generally, the partial flag bundle also arises as a GIT quotient, of the total space of the bundle of homomorphisms
\begin{equation}
	\label{eq:hom_bundle}
	\bigoplus_{i=1}^{\ell-1} \Hom\left(\cO^{\oplus r_{i}}, \cO^{\oplus r_{i+1}} \right)
	\oplus \Hom \left(\cO^{\oplus r_{\ell}},E \right)
\end{equation}
with respect to the same group $G$ and the same linearisation. $\Fl(E)$ carries $\ell$ tautological bundles of ranks $r_1, \dots, r_{\ell}$,  which we will denote $S_1, \dots, S_{\ell}$. These bundles restrict to the usual tautological bundles on $\Fl$ on each fibre. The bundle $S_i$ is induced by the representation~$\rho_i$.
\begin{definition}
Let 
\[
	p_i(t)=t^{r_i}-c_1(S_i)t^{r_i-1}+\dots + (-1)^{r_i} c_{r_i}(S_i)
\] 
be the Chern polynomial of $S_i^\vee$. We denote the roots of $p_i$ by $H_{i,j}$,~$1 \leq j \leq r_i$. The $H_{i,j}$ are in general only defined over an appropriate ring extension of $H^\bullet(\Fl(E), \CC)$, but symmetric polynomials in the $H_{i,j}$ give well-defined elements of $H^\bullet(\Fl(E), \CC)$.
\end{definition}
The maximal torus $T\subset G$ is isomorphic to $(\CC^\times)^{R}$. The corresponding Abelian quotient
\[
	\Fl(E)_T \coloneqq \Hom\big(\cdots\big) \GIT_\chi (\CC^\times)^{R},
\]
where $\Hom\big(\cdots\big)$ is the bundle of homomorphisms \eqref{eq:hom_bundle}, is a fibre bundle over $X$ with general fibre isomorphic to the toric variety $\Fl_T:= \CC^N \GIT_\chi (\CC^\times)^R$. The space $\Fl(E)_T$ also carries natural cohomology classes:
\begin{definition}
Let $\rho_{i,j}\colon  (\CC^\times)^{R} \rightarrow \GL_1(\CC)$ be the dual of the one-dimensional representation of $(\CC^\times)^{R}$ given by projection to the $(i,j)$th factor $\CC^{\times} = \GL_1(\CC)$; here we use the indexing of the set $\{1,2,\ldots,R\}$ specified in \S\ref{notation}. We define ${L}_{i,j} \in H^2(\Fl_T,\CC)$ to be the line bundle on $\Fl(E)_T$ induced by $\rho_{i,j}$ and denote its first Chern class by $\tilde{H}_{i,j}$. Similarly, we define $h_{i,j}$ to be the first Chern class of the line bundle on $\Fl_T$ induced by the represenation $\rho_{i,j}$. Equivalently, $h_{i,j}$ is the restriction of $\tilde{H}_{i,j}$ to a general fibre $\Fl_T$ of $\Fl(E)_T$. 
\end{definition}
Recall that, for a representation $\rho$ of $G$, the corresponding vector bundle $V^T$ splits as a direct sum of line bundles $F_1 \oplus \cdots \oplus F_k$. It is a general fact that if $f$ is a symmetric polynomial in the $c_1(F_i)$, then $f$ can be written as a polynomial in the elementary symmetric polynomials $e_r(c_1(F_1), \dots, c_1(F_k))$, that is, in the Chern classes $c_r(V^T)$. By \eqref{(21)} we have that $j^*c_r(V^T)=q^*c_r(V^G)$, and so replacing any occurrence of $c_r(V^T)$ by $c_r(V^G)$ gives an expression $g \in H^\bullet(A \GIT G)$ which satisfies $q^*g=j^*f$. That is, $f$ is a lift of $g$. Applying this to the dual of the standard representation $\rho_i$ of the $i$th factor of $G$ shows that any polynomial $p$ which is symmetric in each of the sets $\tilde{H}_{i,j}$ for fixed $i$ projects to the same expression in $H^\bullet(\Fl(E))$ with any occurrence of $\tilde{H}_{i,j}$ replaced by the corresponding Chern root $H_{i,j}$.

\begin{lemma}\label{torus_invariant_divisors}
Let $(\CC^\times)^R$ act on $\CC^N$, arrange the weights for this action in an $R \times N$-matrix~$(m_{i,k})$ and
consider $E=L_1 \oplus \dots \oplus L_N \xrightarrow{\pi} X$ a direct sum of line bundles. Form the associated toric fibration $E \GIT (\CC^\times)^R$  with general fibre $\CC^N \GIT (\CC^\times)^R$ and let $h_i$ (respectively $H_i$) be the first Chern class of the line bundle on $\CC^N \GIT (\CC^\times)^R$ (respectively on $E \GIT (\CC^\times)^R$ induced by the dual of the representation which is standard on the $i$th factor of $(\CC^\times)^R$ and trivial on the other factors. 
Then 
\begin{itemize}
\item the Poincar\'e duals $u_k$ of the torus invariant divisors of the toric variety $\CC^N \GIT (\CC^\times)^R$ are:
$$u_k=\sum_{k=1}^Rm_{i,k}h_i$$
\item
 the Poincar\'e duals $U_k$ of the torus invariant divisors of the total space of the toric fibration $E \GIT (\CC^\times)^R\xrightarrow{\pi} X $ are:
$$U_k=\sum_{k=1}^Rm_{i,k}H_i+\pi^*c_1(L_k)$$
\end{itemize}
\end{lemma}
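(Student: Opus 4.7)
The plan is to realise each torus-invariant prime divisor as the vanishing locus of an explicit equivariant section of a $G$-equivariant line bundle on the GIT quotient, and then read off its Poincar\'e dual as the first Chern class of that line bundle.

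For the first bullet, the $k$th torus-invariant prime divisor $D_k \subset \CC^N \GIT (\CC^\times)^R$ is the image of the coordinate hyperplane $\{z_k = 0\}$ under the GIT quotient map. The coordinate function $z_k \colon \CC^N \to \CC$ is $(\CC^\times)^R$-semi-invariant of weight $(m_{1,k}, \ldots, m_{R,k})$, hence by the associated-bundle construction of \S\ref{topology} it descends to a nonzero section $\sigma_k$ of a line bundle $M_k$ on the quotient. Using that this construction is multiplicative in characters, together with the definition of $h_i$ in \S\ref{notation} as the first Chern class of the line bundle induced by the dual of the $i$th projection character, the first Chern class $c_1(M_k)$ works out to $\sum_i m_{i,k} h_i$. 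Since the vanishing locus of $\sigma_k$ is exactly $D_k$, this gives $u_k = \sum_i m_{i,k} h_i$.

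For the second bullet, the $k$th torus-invariant prime divisor $D_k \subset E \GIT (\CC^\times)^R$ is the image of the sub-bundle $\bigoplus_{j \neq k} L_j \subset E$ under the GIT quotient map. The fibrewise $k$th projection $E \to L_k$ can be viewed as a tautological section $s_k$ of the pulled-back line bundle $\pi_E^* L_k$ on the total space of $E$, where $\pi_E \colon E \to X$ is the vector bundle projection. This section is $(\CC^\times)^R$-equivariant with fibrewise weight $(m_{1,k}, \ldots, m_{R,k})$, so it descends to a section of the tensor product
\[
    \pi^* L_k \otimes M_k \quad \text{on} \quad E \GIT (\CC^\times)^R,
\]
where $\pi$ is the toric fibration projection and $M_k$ is the character-induced line bundle produced by the argument of the first bullet applied in the relative setting, which has first Chern class $\sum_i m_{i,k} H_i$. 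The zero locus of the descended section is $D_k$; combined with additivity of first Chern classes under tensor products and $c_1(\pi^* L_k) = \pi^* c_1(L_k)$, this yields
\[
    U_k = \pi^* c_1(L_k) + \sum_i m_{i,k} H_i,
\]
as required.

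The only point requiring care is the bookkeeping of the sign conventions implicit in the associated-bundle formula $V(\rho) = (A \times V) \GIT G$ of \S\ref{topology} (note the $\rho(g^{-1})$) together with the convention that $h_i$ and $H_i$ are induced by the \emph{duals} of the standard characters; these two sign flips cancel, yielding the positive signs in the statement. Apart from this, both parts are direct consequences of the fact that a semi-invariant function defines an equivariant section of an associated line bundle.
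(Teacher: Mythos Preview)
The paper states this lemma without proof, treating it as a standard fact about toric varieties and their relative versions; there is nothing to compare against. Your argument is correct and is the natural way to establish the result: realise each coordinate hyperplane (respectively, sub-bundle $\bigoplus_{j\ne k}L_j$) as the zero locus of a semi-invariant section, identify the associated line bundle via the character, and read off the Chern class. Your remark about the two sign flips cancelling is exactly right and is consistent with the sanity check $\PP^n=\CC^{n+1}\GIT\CC^\times$, where $h_1$ is the hyperplane class and each $u_k$ equals $h_1$.
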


When applying Lemma~\ref{torus_invariant_divisors} to our situation \eqref{eq:hom_bundle} it will be convenient to define $H_{\ell+1, j}:=\pi^*c_1(L_j^\vee)$. Then the set of torus invariant divisors is 
\begin{align*}
H_{i,j} - H_{i+1,j'} && 1 \leq i \leq \ell, \, 1 \leq j \leq r_{i}, \, 1 \leq j' \leq r_{i+1}
\end{align*}

We will also need to know about the ample cone of a toric variety $\CC^N \GIT (\CC^\times)^R$. This is most easily described in terms of the secondary fan, that is, by the wall-and-chamber decomposition of $\Pic(\CC^N \GIT (\CC^\times)^R) \otimes \RR \cong \RR^R$ given by the cones spanned by size $R-1$ subsets of columns of the weight matrix. The ample cone of $\CC^N \GIT (\CC^\times)^R$ is then the chamber that contains the stability condition $\chi$. Moreover, for a subset $\alpha \subset \{1, \dots, N\}$ of size $R$ the cone in the secondary fan spanned by the classes $u_k$, $k\in \alpha$, contains the stability condition (and therefore also the ample cone) iff the intersection $u_\alpha=\bigcap_{k \notin \alpha} u_k$ is nonempty. In this case, $U_\alpha=\bigcap_{k \notin \alpha} U_k$ restricts to a torus fixed point on every fibre and, since $E$ splits as a direct sum of line bundles, $U_\alpha$ is the image of a section of the toric fibration $\pi$. We denote this section by $s_\alpha$. By construction, the torus invariant divisors $U_{k}$,~$k \in \alpha$, do not meet $U_\alpha$, so that $s_\alpha^*(U_k)=0$ for all $k \in \alpha$.
For the toric variety $\Fl_T$ one can easily write down the set of $R$-dimensional cones containing $\chi=(1, \dots, 1)$. 
For each index $(i,j)$, choose some $j' \in \{1, \dots, r_{\ell+1}\}$. Then the cone spanned by 
\begin{align}\label{ampleconegen}
	h_{i,j}-h_{i+1,j'} && 1 \leq i < \ell-1, \, 1 \leq j \leq r_i && h_{\ell, j}, \, 1 \leq j \leq r_\ell
\end{align}
contains $\chi$ and every cone containing $\chi$ is of that form. 

\section{Givental's Formalism} \label{givental formalism}

In this section we review Givental's geometric formalism for Gromov--Witten theory, concentrating on the genus-zero case. The main reference for this is \cite{Givental2004}. Let $Y$ be a smooth projective variety and consider 
$$\cH_Y=H^\bullet(Y, \Lambda)[z, z^{-1}] \! ]=\Big\{ \sum_{k= -\infty}^{m} a_i z^i \colon  \text{$a_i \in H^{\bullet}(Y,\Lambda)$, $m \in \ZZ$}\Big\}$$ where $z$ is an indeterminate and $\Lambda$ is the Novikov ring for $Y$.
After picking a basis $\{\phi_1, \dots, \phi_N\}$ for $H^\bullet(Y;\CC)$ with $\phi_1 = 1$ and writing $\{\phi^1, \dots, \phi^N\}$ for the Poincare dual basis, we can write elements of $\cH_Y$ as 
\begin{align} 
\sum_{i=0}^m \sum_{\alpha=1}^N q_i^{\alpha}\phi_{\alpha}z^i+ \sum_{i=0}^\infty \sum_{\alpha=1}^N p_{i,\alpha}\phi^{\alpha}(-z)^{-1-i} \label{eq11}
\end{align}
where $q_i^{\alpha}$,~$p_{i,\alpha} \in \Lambda$. The $q_i^{\alpha}$,~$p_{i,\alpha}$ then provide coordinates on $\cH_Y$. 
The space $\cH_Y$ carries a symplectic form 
\begin{align*}
\Omega\colon \cH_Y \otimes \cH_Y &\rightarrow \Lambda\\
f \otimes g &\rightarrow \text{Res}_{z=0}(f(-z), g(z)) \, dz
\end{align*}
where $(\cdot , \cdot )$ denotes the Poincar\'e pairing, extended $\CC[z, z^{-1}]\!]$-linearly to $\cH_Y$. 
By construction, $\Omega$ is in Darboux form with respect to our coordinates: 
\[
	\Omega=\sum_i \sum_\alpha dp_{i,\alpha} \wedge dq_i^{\alpha}
\]
We fix a Lagrangian polarisation of $\cH$ as $\cH_Y=\cH_+ \oplus \cH_-$, where $$\cH_+=H^\bullet(Y;\Lambda)[z], \quad \cH_-=z^{-1}H^\bullet(Y;\Lambda)[\![z^{-1}]\!]$$
This polarisation $\cH_Y = \cH_+ \oplus \cH_-$ identifies $\cH_Y$ with $T^* \cH_+$. We now relate this to Gromov--Witten theory.

\begin{definition} The \emph{genus-zero descendant potential} is a generating function for genus-zero Gromov--Witten invariants:
	$$\mathcal{F}_{Y}^{0} = \sum_{n = 0}^\infty \sum_{d \in \NE(Y)} \frac{Q^d}{n!} t^{\alpha_1}_{i_1} \dots t^{\alpha_{n}}_{i_n}\langle \phi_{\alpha_1}\psi^{i_1}, \dots, \phi_{\alpha_n}\psi^{i_n} \rangle_{0,n,d}$$ Here $t_i^\alpha$ is a formal variable, $\NE(Y)$ denotes the Mori cone of $Y$, and Einstein summation is used for repeated lower and upper indices. 
\end{definition}

\noindent After setting 
\begin{equation}
	\label{eq:dilaton}
	t^{\alpha}_{i} = q^{\alpha}_{i} + \delta^{i}_{1}\delta^{1}_{\alpha},
\end{equation}
where $\delta_i^j$ denotes the Kronecker delta, we obtain a (formal germ of a) function $\mathcal{F}^{0}_Y \colon  \cH_+ \rightarrow \Lambda$. 
\begin{definition}
The Givental cone $\cL_Y$ of $Y$ is the graph of the differential of $\mathcal{F}_{Y}^{0}\colon  \cH_+ \rightarrow \Lambda$:
	$$\cL_Y = \left\{(\mathbf{q,p}) \in T^*\cH_Y= \cH_+ \oplus \cH_- \colon  p_{i, \alpha} = \frac{\partial \mathcal{F}^0_{Y}}{\partial q^{\alpha}_i}\right\}$$
	Note that $\cL_Y$ is Lagrangian by virtue of being the graph of the differential of a function. Moreover, it has the following special geometric properties \cite{Givental2004, CCIT2009Computing, CoatesGivental2007}
	\begin{itemize}
	    \item $\cL$ is preserved by scalar multiplication, i.e. it is (the formal germ of) a cone
	    \item the tangent space $T_f$ of $\cL_Y$ at $f \in \cL_Y$ is tangent to $\cL$ exactly along $z T_f$. This means:
	    \begin{enumerate}
	        \item $zT_f \subset \cL_Y$
	        \item for $g \in zT_f$, we have $T_g = T_f$
	        \item $T_f \cap \cL_Y = z T_f$
	    \end{enumerate}
	\end{itemize}
\end{definition}\label{J}
A general point of $\cL_Y$ can be written, in view of the dilaton shift \eqref{eq:dilaton}, as
\begin{align*}
	&{-z} + \sum_{i = 0}^\infty t^{\alpha}_i \phi_{\alpha}z^i + \sum_{n = 0}^\infty \sum_{d \in \NE(Y)} \frac{Q^d}{n!} t^{\alpha_1}_{i_1} \dots t^{\alpha_{n}}_{i_n}\langle \phi_{\alpha_1}\psi^{i_1}, \dots, \phi_{\alpha_n}\psi^{i_n}, \phi_{\alpha}\psi^{i} \rangle_{0,n+1,d} \phi^{\alpha}(-z)^{-i-1} \\
	= &{-z} + \sum_{i = 0}^\infty t^{\alpha}_i \phi_{\alpha}z^i + \sum_{n = 0}^\infty \sum_{d \in \NE(Y)} \frac{Q^d}{n!} t^{\alpha_1}_{i_1} \dots t^{\alpha_{n}}_{i_n}\langle \phi_{\alpha_1}\psi^{i_1}, \dots, \phi_{\alpha_n}\psi^{i_n}, \frac{\phi_{\alpha}}{-z - \psi} \rangle_{0,n+1,d} \phi^{\alpha}
\end{align*}
Thus knowing $\cL_Y$ is equivalent to knowing all genus-zero Gromov--Witten invariants of $Y$. Setting $t_k^\alpha=0$ for all $k>0$, we obtain the \emph{$J$-function} of $
Y$:
\begin{equation*}
	J(\tau,-z) = -z + \tau + \sum_{n = 0}^\infty \sum_{d \in \mathrm{\NE(X)}} \frac{Q^d}{n!} \left\langle \tau, \dots \tau, \frac{\phi_{\alpha}}{-z - \psi} \right\rangle_{0,n+1,d} \phi^{\alpha}
\end{equation*}
where $\tau = t^1_0 \phi_1 + \dots t^N_0 \phi_N \in H^\bullet(Y)$. The $J$-function is the unique family of elements $\tau \mapsto J(\tau,-z)$ on the Lagrangian cone such that 
\begin{equation*}
	J(\tau, -z) = -z + \tau + O(z^{-1}).
\end{equation*} 

We will need a generalisation of all of this to twisted Gromov--Witten invariants~\cite{CoatesGivental2007}. Let~$F$ be a vector bundle on $Y$ and consider the universal family over the moduli space of stable maps
$$\begin{tikzcd}
{C_{0,n,d}} \arrow[d, "\pi"'] \arrow[r, "f"] & Y \\
{Y_{0,n,d}}                                  &  
\end{tikzcd}$$
Let $\pi_!$ be the pushforward in $K$-theory. We define $$F_{0,n,d} = \pi_{!}f^*F=R^{0}\pi_{*}f^*F-R^{1}\pi_{*}f^*F$$
(the higher derived functors vanish).
In general $F_{0,n,d}$ is a class in $K$-theory and not an honest vector bundle. This means that in order to evaluate a characteristic class $\mathbf{c}(\cdot)$ on $F_{0,n,d}$ we need $\mathbf{c}(\cdot)$ to be \emph{multiplicative} and \emph{invertible}. We can then set 
\[
	\mathbf{c}(F_{0,n,d}) = \mathbf{c}(R^{0}\pi_{*}f^* F) \cup \mathbf{c}(R^{1}\pi_{*}f^* F)^{-1}
\] 
where $\mathbf{c}(R^{i}\pi_{*}f^* F)$ is defined using an appropriate locally free resolution. 
\begin{definition} \label{equivarianteuler}
Let $F$ be a vector bundle on $Y$ and let $\mathbf{c}(\cdot)$ be an invertible multiplicative characteristic class. We will refer to the pair $(F, {\bf c})$ as twisting data. Define $(F, {\bf c})$-twisted Gromov--Witten invariants as
	\begin{equation*}
		\langle \alpha_1 \psi_1^{i_1}, \dots \alpha_n \psi_{n}^{i_n} \rangle_{0,n,d}^{F, {\bf c}} = \int_{[Y_{0,n,d}]^{\mathrm{vir}} \cap \mathbf{c}(F_{0,n,d})} \ev_1^{*}\alpha_1 \cup \dots \cup \ev_n^{*}\alpha_n \cup \psi_1^{i_1} \cup \dots \cup \psi_{n}^{i_n}
	\end{equation*}
\end{definition}
Any multiplicative invertible characteristic class can be written as $\mathbf{c}(\cdot ) = \exp(\sum_{k \geq 0} s_k \ch_k(\cdot))$, where $\ch_k$ is the $k$th component of the Chern character and $s_0$,~$s_1$,~\ldots are appropriate coefficients. So we work with cohomology groups $H^{\bullet}(X, \Lambda_s)$, where $\Lambda_s$ is the completion  of $\Lambda[s_0, s_1, \dots]$ with respect to the valuation
\begin{equation*}
	v(Q^d) = \big\langle c_1(\cO(1)), d \big \rangle, \quad v(s_k) = k+1.
\end{equation*}
Most of the definitions from before now carry over. 
We have the twisted Poincar\'e pairing $(\alpha,\beta)^{F, {\bf c}} = \int_Y \mathbf{c}(F) \cup \alpha \cup \beta $ which defines the basis $\phi^1, \dots \phi^N$ dual to our chosen basis $1 =\phi_1, \dots, \phi_N$ for $H^\bullet(Y)$. The Givental space becomes $\cH_Y = H^{\bullet}(Y,\Lambda_s) \, \otimes \, \CC[z,z^{-1}]\!]$ with the twisted symplectic form $$\Omega^{F, {\bf c}}(f(z), g(z)) = \mathrm{Res}_{z=0}\big(f(-z),g(z)\big)^{F, {\bf c}}dz.$$ This form admits Darboux coordinates as before which give a Lagrangian polarisation of $\cH_Y$. Then the twisted Lagrangian cone $\cL_{F, {\bf c}}$ is defined, via the dilaton shift \eqref{eq:dilaton}, as the graph of the differential of the generating function $\mathcal{F}^{0,F, {\bf c}}_Y$ for genus zero \textit{twisted} Gromov--Witten invariants. Finally, just as before, we can define a twisted $J$-function:
\begin{definition} \label{twisted J}
Given twisting data $(F, {\bf c})$ for $Y$, the twisted $J$-function is:
\begin{equation*}
	J_{F, {\bf c}}(\tau,{-z}) = {-z} + \tau + \sum_{n = 0}^\infty \sum_{d \in \NE(Y)} \frac{Q^d}{n!} \left\langle \tau, \dots \tau, \frac{\phi_{\alpha}}{-z - \psi} \right\rangle^{F, {\bf c}}_{0,n+1,d} \phi^{\alpha}
\end{equation*}
\end{definition}
\noindent This is once again characterised as the unique family $\tau \mapsto J_{F, {\bf c}}(\tau,-z)$ of elements of the twisted Lagrangian cone of the form
\begin{equation*}
	J_{F, {\bf c}}(\tau, -z) = -z + \tau + O(z^{-1})
\end{equation*} 
Note that we can recover the untwisted theory by setting $\mathbf{c}=1$.

In what follows we take $\mathbf{c}$ to be the $\CC^\times$-equivariant Euler class \eqref{intro equivariant Euler}, which is multiplicative and invertible. The $\Cstar$-action here is the canonical $\CC^\times$-action on any vector bundle given by rescaling the fibres. We write $F_\lambda$ for the twisting data $(F, \mathbf{c})$, where $F$ is equipped with the $\CC^\times$-action given by rescaling the fibres with equivariant parameter $\lambda$. In this setting, Gromov--Witten invariants (and the coefficients $s_k$) take values in the fraction field $\CC(\lambda)$ of the $\CC^\times$-equivariant cohomology of a point. Here $\lambda$ is the hyperplane class on $\mathbb{CP}^{\infty}$, so that $H^{\bullet}_{\CC^\times}(\{\mathrm{pt}\}) = \CC[\lambda ]$, and we work over the field $\CC(\lambda)$. 

\begin{remark}
	As we have set things up, the twisted cone $\cL_{F_\lambda}$ is a Lagrangian submanifold of the symplectic vector space $\big(\cH_Y, \Omega^{F_\lambda}\big)$, so as $\lambda$ varies both the Lagrangian submanifold and the ambient symplectic space change. To obtain the picture described in the Introduction, where all the Lagrangian submanifolds $\cL_{F_\lambda}$ lie in a single symplectic vector space $\big(\cH_Y, \Omega \big)$, one can identify $\big(\cH_Y, \Omega \big)$ with $\big(\cH_Y, \Omega^{F_\lambda} \big)$ by multiplication by the square root of the equivariant Euler class of $F$. See~\cite[\S8]{CoatesGivental2007} for details.
\end{remark}

\subsection{Twisting the $I$-function}\label{I-functions}
We will now prove a general result following an argument from \cite{CCIT2009Computing}. We say that a family $\tau \mapsto I(\tau)$ of elements of $\cH_Y$ \emph{satisfies the Divisor Equation} if the parameter domain for $\tau$ is a product $U \times H^2(Y)$ and $I(\tau)$ takes the form
$$ I(\tau) = \sum_{\beta \in \NE(Y)}Q^{\beta} I_{\beta}(\tau,z) $$
where
\begin{align}\label{divisor equation}
	z\nabla_{\rho}I_{\beta}= \big(\rho + \langle \rho,\beta \rangle z\big) I_{\beta}
	&&
	\text{for all $\rho \in H^2(Y)$.}
\end{align}
Here $\nabla_\rho$ is the directional derivative along $\rho$. Let $F'$ be a vector bundle on $Y$, and consider any family $\tau \mapsto I(\tau) \in \cL_{F'_\mu}$ that satisfies the Divisor Equation. Given another vector bundle $F$ which splits as a direct sum of line bundles $F=F_1 \oplus \dots \oplus F_k$, we explain how to modify the family $\tau \mapsto I(\tau)$ by introducing explicit hypergeometric factors that depend on $F$. We prove that (1) this modified family can be written in terms of the {\it Quantum Riemann-Roch operator} and the original family; and (2) the modified family lies on the twisted Lagrangian cone $\cL_{F_\lambda \oplus F'_\mu}$.

\begin{definition}
Define the element $G(x,z) \in \cH_Y$ by  $$G(x,z) := \sum_{l=0}^\infty \sum_{m=0}^\infty s_{l + m - 1} \frac{B_m}{m!}\frac{x^l}{l!}z^{m-1}$$
where $B_m$ are the Bernoulli numbers and the $s_k$ are the coefficients obtained by writing the $\CC^\times$-equivariant Euler class \eqref{intro equivariant Euler} in the form $\exp\big(\sum_{k \geq 0} s_k \ch_k(\cdot)\big)$. 
\end{definition}

\begin{remark}
	The discussion in this section is valid for any invertible multiplicative characteristic class, not just the equivariant Euler class, but we will neither need nor emphasize this.
\end{remark}

\begin{definition}\label{delta}
Let $F$ be a vector bundle -- not necessarily split -- and let $f_i$ be the Chern roots of $F$. Define the \textit{Quantum Riemann-Roch operator}, $\Delta_{F_\lambda} \colon  \cH_{Y} \rightarrow \cH_{Y}$ as multiplication by 
	\begin{equation*}
	\Delta_{F_\lambda} = \prod_{i=1}^{k} \exp(G(f_i, z))
	\end{equation*}
\end{definition}
\begin{theorem}[\cite{CoatesGivental2007}] \label{QRR} \ 
$\Delta_{F_\lambda}$ gives a linear symplectomorphism of $(\cH_{Y},\Omega_{Y})$  with $(\cH_{Y}, \Omega_{Y}^{F_\lambda})$ such that $$\Delta_{F_\lambda}(\cL_{Y}) = \cL_{F_\lambda}$$
\end{theorem}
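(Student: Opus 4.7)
The plan is to establish the identity $\Delta_{F_\lambda}(\cL_Y) = \cL_{F_\lambda}$ via Mumford's Grothendieck--Riemann--Roch theorem applied to the universal family over the moduli space of stable maps, following the strategy of Coates--Givental \cite{CoatesGivental2007}. First I would reduce to the case where $F$ is a line bundle: the equivariant Euler class is multiplicative in short exact sequences and $\Delta_{F_\lambda}$ is by definition a product over Chern roots of $F$, so the splitting principle lets us assume $F$ is a line bundle with $c_1(F) = f$, in which case $f^{*} F$ is an honest line bundle on the universal curve $C_{0,n,d}$.

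Next I would apply GRR to $\pi \colon C_{0,n,d} \to Y_{0,n,d}$ with the sheaf $f^{*} F$, giving
\[
	\ch(F_{0,n,d}) = \pi_{*}\bigl(\ch(f^{*} F) \cdot \mathrm{Td}(T_\pi)\bigr),
\]
where $T_\pi$ is the relative tangent sheaf. The crucial input is the expansion of the Todd class: on the smooth locus of $\pi$ it is a series in the relative Chern class with coefficients involving the Bernoulli numbers $B_m/m!$; at the $n$ marked sections of $\pi$ it restricts to the factor $\psi_j/(1 - e^{-\psi_j})$; and at the nodes of singular fibres it contributes extra terms supported on boundary strata. Assembling these, one finds that $\mathbf{c}(F_{0,n,d}) = \exp\!\bigl(\sum_k s_k \ch_k(F_{0,n,d})\bigr)$ factorises, modulo contributions that reduce inductively to lower-point invariants via the gluing morphisms of the boundary strata, as a product over marked points of $\exp\bigl(G(f, \psi_j)\bigr)$. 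Inserting this into the twisted descendant potential $\mathcal{F}^{0, F_\lambda}_Y$ and using the standard Givental dictionary between $\psi$-insertions at marked points and the variable $z$, one recognises the result as the image of the untwisted potential $\mathcal{F}^{0}_Y$ under multiplication by $\Delta_{F_\lambda}$. Since $\cL_Y$ and $\cL_{F_\lambda}$ are the graphs of the differentials of these potentials after the dilaton shift \eqref{eq:dilaton}, and since $\Delta_{F_\lambda}$ is $\CC[z, z^{-1}]\!]$-linear, the transformed graph $\Delta_{F_\lambda}(\cL_Y)$ equals $\cL_{F_\lambda}$.

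The symplectomorphism property reduces to the identity $G(x, z) + G(x, -z) = -\sum_k s_k \, x^k/k!$, which follows from the vanishing of odd Bernoulli numbers beyond $B_1$ together with $B_1 = -\tfrac{1}{2}$; this implies $\Delta_{F_\lambda}(z) \cdot \Delta_{F_\lambda}(-z) = \mathbf{c}(F)^{-1}$, which is exactly the scalar relating $\Omega^{F_\lambda}_Y$ and $\Omega_Y$, so $\Delta_{F_\lambda}$ intertwines the two symplectic forms. The main obstacle I anticipate is the Grothendieck--Riemann--Roch bookkeeping in the second step: the boundary contributions from nodes of the universal curve couple distinct marked points and distinct genus-zero strata in a way that a priori looks incompatible with $\Delta_{F_\lambda}$ being a product over marked points of multiplications by $\exp(G(f, \psi))$. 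The heart of \cite{CoatesGivental2007} is showing that after summing over all moduli spaces these boundary contributions recombine cleanly by induction on $n$ and on the curve class $d$, so that the net effect on Lagrangian cones is a single multiplication operator on $\cH_Y$ rather than a more complicated integral transform.
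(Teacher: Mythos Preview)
The paper does not prove Theorem~\ref{QRR}; it is quoted as a black box from \cite{CoatesGivental2007} and immediately used. So there is no ``paper's own proof'' to compare against---your proposal is an attempt to reconstruct the argument from the cited reference.

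As a sketch of the Coates--Givental argument your outline is in the right spirit but misrepresents the mechanism in one important respect. In \cite{CoatesGivental2007} the proof does not proceed by directly recognising $\mathbf{c}(F_{0,n,d})$ as a product $\prod_j \exp(G(f,\psi_j))$ modulo boundary terms and then matching potentials. Instead, one differentiates the twisted total descendant potential with respect to the parameters $s_k$ and uses Mumford's GRR formula for $\ch_k(F_{0,n,d})$ to obtain a system of linear first-order PDEs in the $s_k$; the right-hand side involves $\kappa$-classes, $\psi$-classes at marked points, and nodal boundary terms. Each of these three pieces is then reinterpreted as the infinitesimal action of a quadratic Hamiltonian on Fock space (respectively: a dilaton-shift translation, multiplication by $\psi$-polynomials, and the genus-gluing operator), and the sum exponentiates to the quantisation $\widehat{\Delta}_{F_\lambda}$, whose genus-zero shadow is multiplication by $\Delta_{F_\lambda}$ on $\cH_Y$. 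Your inductive ``recombination'' paragraph is gesturing at this, but the actual argument is a PDE-in-$s_k$ uniqueness argument rather than an induction on $(n,d)$, and the boundary terms do not simply reduce to lower-point invariants---they are absorbed by the quantisation formalism. Your verification of the symplectomorphism property via $G(x,z)+G(x,-z)$ is correct.
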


Since $\Delta_{F_\lambda} \circ \Delta_{F'_\mu}=\Delta_{F_\lambda \oplus F'_\mu}$, it follows immediately that
$$\Delta_{F_\lambda}(\cL_{F'_\mu}) = \cL_{F_\lambda \oplus F'_\mu}.$$

\begin{lemma} \label{(3)}
	Let $F$ be a vector bundle and let $f_1, \ldots, f_k$ be the Chern roots of $F$. Let 
	$$D_{F_\lambda}=\prod_{i=1}^{k} \exp\big({-G}(z\nabla_{f_i},z)\big)$$
	and suppose that $\tau \mapsto I(\tau)$ is a family of elements of $\cL_{F'_\mu}$. Then $\tau \mapsto D_{F_\lambda}(I(\tau))$ is also a family of elements of $\cL_{F'_\mu}$.
\end{lemma}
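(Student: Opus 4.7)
The plan is to exploit the two special geometric properties of $\cL_{F'_\mu}$ recalled in its definition: it is (the germ of) a cone, and for each $f\in\cL_{F'_\mu}$ with tangent space $T_f$ one has $zT_f\subset\cL_{F'_\mu}$, with the tangent space to $\cL_{F'_\mu}$ constant along $zT_f$. Since $D_{F_\lambda}$ is a formal power series in the commuting operators $z\nabla_{f_1},\ldots,z\nabla_{f_k}$ with coefficients in $\Lambda_s[z,z^{-1}]$ of positive valuation, it will suffice to prove the statement first for an arbitrary polynomial $P(z\nabla_{f_1},\ldots,z\nabla_{f_k})$ with scalar coefficients, and then to pass to the limit.

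For the polynomial case I would induct on the degree of $P$. In degree zero the claim is just the cone property. For the inductive step, suppose that $\tau\mapsto g(\tau)$ is a family of elements of $\cL_{F'_\mu}$ satisfying $T_{g(\tau)}=T_{I(\tau)}$; both conditions hold trivially at the base. Then $\nabla_{f_i}g(\tau)$ lies in $T_{g(\tau)}=T_{I(\tau)}$, so $z\nabla_{f_i}g(\tau)\in zT_{I(\tau)}\subset\cL_{F'_\mu}$, and the rigidity of tangent spaces along $zT_{I(\tau)}$ gives $T_{z\nabla_{f_i}g(\tau)}=T_{I(\tau)}$, preserving the inductive hypothesis. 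Iterating handles any monomial, and linearity then handles any polynomial.

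To pass from polynomials to the full operator I would check convergence in the $\Lambda_s$-adic topology: because the coefficient of $x^\ell z^{m-1}$ in $G(x,z)$ is proportional to $s_{\ell+m-1}$, which has valuation $\ell+m$, every truncation of $D_{F_\lambda}$ modulo a fixed power of the valuation ideal is a polynomial in the $z\nabla_{f_i}$. By the polynomial case, each truncation applied to $I(\tau)$ lies on $\cL_{F'_\mu}$, and since $\cL_{F'_\mu}$ is cut out by equations defined over $\Lambda_s$ and is therefore closed in the $\Lambda_s$-adic topology, so does the limit. The main obstacle is a minor technicality rather than a conceptual one: the operators $\nabla_{f_i}$ are only a priori defined when $f_i$ is a class on $Y$, whereas here $f_i$ is only a Chern root. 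Since $D_{F_\lambda}$ is symmetric in the $f_i$, Newton's identities let one re-express it as a series in the well-defined operators $\nabla_{c_r(F)}$; one then enlarges the parameter space of $I$ to include directions corresponding to the classes $c_r(F)\in H^\bullet(Y)$ (using the Divisor Equation for $c_r(F)\in H^2(Y)$), after which the inductive step and the convergence argument both go through verbatim.
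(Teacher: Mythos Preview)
Your geometric strategy---show that monomials in $z\nabla_{f_i}$ carry $I(\tau)$ into $zT_{I(\tau)}$, use linearity of the subspace $zT_{I(\tau)}$ for polynomials, then pass to the $\Lambda_s$-adic limit---is attractive, but there is a genuine gap at the passage from your ``polynomial case'' to the full operator. You prove the result for polynomials in $z\nabla_{f_i}$ with \emph{scalar} coefficients, yet the truncations of $D_{F_\lambda}$ are polynomials in $z\nabla_{f_i}$ with coefficients in $\Lambda_s[z,z^{-1}]$: the $m=0$ terms in $G(x,z)$ contribute $s_{l-1}\,z^{-1}(z\nabla_f)^{l}=s_{l-1}\,\nabla_f(z\nabla_f)^{l-1}$. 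Multiplication by $z$ preserves $zT_{I(\tau)}$, but multiplication by $z^{-1}$ does not; it only sends $zT_{I(\tau)}$ into $T_{I(\tau)}$, which is not contained in the cone. Concretely, already at order $s_0^{2}$ in $\exp(-G)$ one meets $\nabla_f^{2}I$, which lies only in $z^{-1}T_{I(\tau)}$, so the sum of monomial contributions does not land in $zT_{I(\tau)}$ term by term. The required cancellation is real, but your monomial argument cannot see it.

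The paper's proof avoids this by a different induction: on total degree in the variables $s_0,s_1,\ldots$. Differentiating $E_j(D_{F_\lambda}I)$ with respect to $s_i$ brings down exactly one factor $z^{-1}P_i(z\nabla_f,z)$, and the crucial structural fact is that $z^{-1}P_i$ always sends an element of the cone to the \emph{tangent space}: the $m=0$ summand is $\nabla_f(z\nabla_f)^{i}/(i{+}1)!$, and the $m\geq 1$ summands are polynomials in $z$ and $z\nabla_f$. Since $dE_j$ vanishes on the tangent space, this yields the inductive step. It is this specific shape of $\partial G/\partial s_i$---a single $z^{-1}$ always paired with at least one $z\nabla_f$---that does the work your argument is missing. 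Your closing remark about Chern roots versus genuine cohomology classes is reasonable; the paper handles this point with comparable informality.
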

\begin{proof}
	This follows \cite[Theorem~4.6]{CCIT2009Computing}. Let $h = -z + \sum_{i=0}^m t_i z^i + \sum_{j=0}^{\infty} p_{j}(-z)^{-j-1}$ be a point on $\cH_{Y}$. The Lagrangian cone $\cL_{F'_\mu}$ is defined by the equations $E_j=0$,~$j=0,1,2,\dots$ where $$E_j(h) = p_j - \sum_{n \geq 0} \sum_{d \in \NE(Y)} \frac{Q^d}{n!} t^{\alpha_1}_{i_1} \dots t^{\alpha_{n}}_{i_n}\langle \phi_{\alpha_1}\psi^{i_1}, \dots, \phi_{\alpha_n}\psi^{i_n}, \phi_{\alpha}\psi^{j} \rangle_{0,n+1,d} \phi^{\alpha}$$
	We need to show that $E_j(D_{F_\lambda}(I)) = 0$. Note that $D_{F_\lambda}(I) = \prod_{i=1}^{k} \exp(-G(z\nabla_{f_i},z))I$ depends on the parameters $s_i$. For notational simplicity assume that $k=1$, so that $$D_{F_\lambda}(I) = \exp\big({-G}(z\nabla_{f},z)\big)I$$ Set $\deg s_i = i+1$. We will prove the result by inducting on degree. Note that if $s_0 = s_1= \dots = 0$ then $D_{F_\lambda}(I) = I$ so that $E_j(D_{F_\lambda}(I)) = 0$. Assume by induction that $E_j(D_{F_\lambda}(I))$ vanishes up to degree $n$ in the variables $s_0, s_1, s_2, \dots$ Then $$\frac{\partial}{\partial s_i} E_j(D_{F_\lambda}(I)) = d_{D_{F_\lambda}(I)}E_j (z^{-1}P_{i}(z \nabla_{f},z)D_{F_\lambda}(I))$$ where $$P_{i}(z \nabla_{f},z) = \sum_{m=0}^{i+1} \frac{1}{m!(i+1-m)!}z^{m}B_m (z \nabla_{f})^{i+1-m}$$
	
	By induction there exists $D_{F_\lambda}(I)' \in \cL_{F'_\mu}$ such that  $$\frac{\partial}{\partial s_i} E_j(D_{F_\lambda}(I)) = d_{D_{F_\lambda}(I)'}E_j (z^{-1}P_{i}(z \nabla_{f},z)D_{F_\lambda}(I)')$$ 
	up to degree $n$. But the right hand side of this expression is zero, since the term in brackets lies in the tangent space to the Lagrangian cone. Indeed, applying $\nabla_f$ to $D_{F_\lambda}(I_{Y})'$ -- or to any family lying on the cone -- takes it to the tangent space of the cone at the point. And then applying $z\nabla_f$ preserves that tangent space.
\end{proof}
\begin{corollary}\label{Ithm}
	Let $\tau \mapsto I(\tau)$ be a family of elements of $\cL_{F'_\mu}$. Then $\tau \mapsto \Delta_{F_\lambda}(D_{F_\lambda}(I(\tau)))$ is a family of elements of $\cL_{F_\lambda \oplus F'_\mu}$.
\end{corollary}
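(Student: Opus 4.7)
The plan is to chain together the two results that immediately precede the statement. By Lemma~\ref{(3)}, applied to the family $\tau \mapsto I(\tau)$ of elements of $\cL_{F'_\mu}$, we obtain that $\tau \mapsto D_{F_\lambda}(I(\tau))$ is again a family of elements of $\cL_{F'_\mu}$. This is the step that does the real work: it uses the fact that the operator $z\nabla_{f_i}$ preserves tangent spaces to the twisted cone, combined with the induction on the degree in the characteristic-class parameters $s_0, s_1, \ldots$ carried out in the proof of Lemma~\ref{(3)}.

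Next I would invoke the Quantum Riemann--Roch theorem (Theorem~\ref{QRR}) in the form noted in the excerpt just after its statement, namely
\[
    \Delta_{F_\lambda}(\cL_{F'_\mu}) = \cL_{F_\lambda \oplus F'_\mu},
\]
which follows from the multiplicativity property $\Delta_{F_\lambda} \circ \Delta_{F'_\mu} = \Delta_{F_\lambda \oplus F'_\mu}$ together with $\Delta_{F'_\mu}(\cL_Y) = \cL_{F'_\mu}$ and $\Delta_{F_\lambda \oplus F'_\mu}(\cL_Y) = \cL_{F_\lambda \oplus F'_\mu}$. Applying $\Delta_{F_\lambda}$ to the family $\tau \mapsto D_{F_\lambda}(I(\tau))$ of elements of $\cL_{F'_\mu}$ therefore yields a family of elements of $\cL_{F_\lambda \oplus F'_\mu}$, which is exactly the conclusion we want.

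So there is really nothing new to prove: the Corollary is just the composition of Lemma~\ref{(3)} with Theorem~\ref{QRR}. The only subtlety worth flagging is a compatibility one -- the operator $D_{F_\lambda}$ is defined using the Chern roots $f_i$ of $F$ and involves inverse powers of $z$ through the differential operators $z\nabla_{f_i}$, so one must check that $D_{F_\lambda}(I(\tau))$ is a well-defined element of the appropriate completed Givental space, rather than only a formal expression. This is handled in the same way as in~\cite[Theorem~4.6]{CCIT2009Computing}: the $s_k$-adic filtration used in Lemma~\ref{(3)} ensures convergence order-by-order in the coefficients of the equivariant Euler class, and the composition with $\Delta_{F_\lambda}$ is then manifestly well defined because $\Delta_{F_\lambda}$ is a linear symplectomorphism. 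The main conceptual point, and the only thing one might call an ``obstacle,'' is already discharged in Lemma~\ref{(3)}; the Corollary itself is a one-line consequence.
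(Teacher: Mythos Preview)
Your proof is correct and matches the paper's approach exactly: the paper's proof is simply ``This follows immediately by combining \ref{QRR} and \ref{(3)}.'' Your elaboration on well-definedness via the $s_k$-adic filtration is accurate supplementary context, but the core argument is precisely the composition of Lemma~\ref{(3)} with Theorem~\ref{QRR} that you describe.
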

\begin{proof}
This follows immediately by combining \ref{QRR} and \ref{(3)}
\end{proof}
Corollary \ref{Ithm} produces a family of elements on the twisted Lagrangian cone $\cL_{F_\lambda \oplus F'_\mu}$, but in general it is not obvious whether the nonequivariant limit $\lambda \rightarrow 0$ of this family exists. However, in the case when $F$ is split and $\tau \mapsto I(\tau)$ satisfies the Divisor Equation we will show that the family $\Delta_{F_\lambda}(D_{F_\lambda}(I(\tau, -z)))$ is equal to the \emph{twisted $I$-function} $I_{F'_\mu \oplus {F_\lambda}}$ given in Definition~\ref{twistedI}. This has an explicit expression, which makes it easy to check whether the nonequivariant limit exists.  We make the following definitions.

\begin{definition}\label{modification} \label{twistedI}
	Let $\tau \mapsto I(\tau)$ be a family of elements of $\cL_{F'_\mu}$. Let $F= F_1 \oplus \dots \oplus F_k$ be a direct sum of line bundles, and let $f_i=c_1(F_i)$. For $\beta \in \NE(Y)$, we define the modification factor 
	$$M_{\beta}(z) = \prod_{i=1}^{k} \frac{\prod_{m=-\infty}^{\langle f_{i}, \beta \rangle} \lambda + f_{i} + mz }{\prod_{m=-\infty}^{0} \lambda + f_{i} + mz }$$
	The associated \textit{twisted $I$-function} is
	\begin{equation*}
	I^{\tw}(\tau) = \sum_{\beta \in \NE(Y)} Q^{\beta} I_{\beta}(\tau,z) \cdot M_{\beta}(z)
	\end{equation*}	
\end{definition}

To relate $M_{\beta}(z)$ to the Quantum Riemann--Roch operator we will need the following Lemma:
\begin{lemma}\label{delta-M}
	$$M_{\beta}(-z) = \Delta_{F_\lambda}\left(\prod_{i=1}^{k}\exp(- G(f_{i} - \langle f_{i}, \beta \rangle z, z))\right)$$
\end{lemma}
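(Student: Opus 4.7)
The plan is to reduce the claim to a single-factor difference identity for $G$, then prove that identity by a clean generating-function argument, and finally telescope.

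Since $\Delta_{F_\lambda} = \prod_i \exp(G(f_i,z))$ and both sides factor over the index $i$, the claim reduces to the one-factor statement
$$\exp\bigl(G(f, z) - G(f - Kz, z)\bigr) \;=\; \frac{\prod_{m=-\infty}^{K}(\lambda + f - mz)}{\prod_{m=-\infty}^{0}(\lambda + f - mz)}$$
for each Chern root $f = f_i$ and each integer $K = \langle f_i,\beta\rangle$. For $K \geq 0$ the right-hand side collapses to $\prod_{j=1}^{K}(\lambda + f - jz)$; for $K<0$ to $\prod_{j=0}^{-K-1}(\lambda + f + jz)^{-1}$. I will treat both cases in parallel.

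Next I would telescope the left-hand side: for $K \geq 0$,
$$G(f, z) - G(f - Kz, z) \;=\; \sum_{j=1}^{K} \bigl(G(f - (j-1)z,\, z) - G(f - jz,\, z)\bigr),$$
and symmetrically with reversed signs when $K<0$. Each summand has the form $G(y+z,z)-G(y,z)$ with $y = f - jz$, so the whole lemma reduces to the key identity
$$G(y+z, z) - G(y, z) \;=\; \log(\lambda + y);$$
exponentiating and taking the product over $j$ then matches the product of linear factors.

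The main step is this key identity, which I would prove via a generating function in the characteristic-class parameters $s_n$. Write $G(x,z) = \sum_{n\geq 0} s_n F_n(x,z)$, where $F_n(x,z) = \sum_{l+m=n+1}\frac{B_m}{m!}\frac{x^l}{l!}z^{m-1}$. The Bernoulli generating series $\sum_m B_m t^m/m! = t/(e^t - 1)$ then gives, after collecting terms,
$$\sum_{n\geq 0} F_n(x, z)\, s^n \;=\; \frac{e^{xs}}{e^{sz}-1}.$$
Hence the coefficient of $s^n$ in $F_n(x+z, z) - F_n(x,z)$ equals the coefficient of $s^n$ in $(e^{(x+z)s}-e^{xs})/(e^{sz}-1) = e^{xs}$, namely $x^n/n!$. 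Summing against $s_n$ and using $\sum_n s_n x^n/n! = \log(\lambda + x)$ — the defining relation of the $s_n$ for the $\Cstar$-equivariant Euler class of a line bundle with Chern root $x$ — yields $G(x+z,z)-G(x,z)=\log(\lambda+x)$, as required.

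The main obstacle is not the generating-function calculation itself, which is clean, but the careful bookkeeping of the regularized infinite products in $M_\beta(-z)$: for $K<0$ the telescope runs in the opposite direction, and one must verify that the reciprocal factors $\prod_{j=0}^{-K-1}(\lambda+f+jz)^{-1}$ assemble correctly. Once the sign conventions are fixed, the one-factor identity for both signs of $K$ follows, and taking the product over $i$ completes the proof.
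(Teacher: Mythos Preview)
Your proof is correct and follows essentially the same route as the paper: factor over $i$, reduce to the one-step identity $G(x+z,z)-G(x,z)=\mathbf{s}(x)$ (which for the equivariant Euler class is $\log(\lambda+x)$), and telescope. The only difference is that the paper cites this identity from \cite[equation 13]{CCIT2009Computing} rather than proving it; your Bernoulli generating-function argument supplies a self-contained proof, though note that your intermediate formula $\sum_{n\geq 0}F_n(x,z)s^n=\frac{e^{xs}}{e^{sz}-1}$ is off by an $x$-independent term $-\frac{1}{sz}$ (the right-hand side has a pole at $s=0$), which is harmless since it cancels in the difference.
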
 
\begin{proof}
	 Define $$\mathbf{s}(x) = \sum_{k \geq 0} s_k \frac{x^k}{k!}$$	 
	By \cite[equation 13]{CCIT2009Computing} we have that 
	\begin{equation}\label{gamma}
		G(x+z,z) = G(x,z) + \mathbf{s}(x)
	\end{equation}
	We can rewrite
 	$$M_{\beta}(z) = \prod_{i=1}^{k} \frac{\prod_{m=-\infty}^{\langle f_{i}, \beta \rangle} \lambda + f_{i} + mz }{\prod_{m=-\infty}^{0} \lambda + f_{i} + mz } = \prod_{i=1}^{k} \frac{\prod_{m=-\infty}^{\langle f_{i}, \beta \rangle} \exp[\mathbf{s}(f_{i} + mz)] }{\prod_{m=-\infty}^{0} \exp[\mathbf{s}(f_{i} + mz)] }$$
	and so
	\begin{align*}
	M_{\beta}(-z) =& \prod_{i=1}^{k}\exp\left(\sum_{m=-\infty}^{\langle f_i, \beta \rangle} \mathbf{s}(f_{i} - mz) - \sum_{m=-\infty}^{0} \mathbf{s}(f_{i} - mz))\right) \\
	=& \prod_{i=1}^{k} \exp(G(f_i,z) - G(f_i - \langle f_i, \beta \rangle z, z) 
	\end{align*}
	where for the second equality we used \eqref{gamma}.
\end{proof}
\begin{proposition}\label{twisted=Deltad}
Let $\tau \mapsto I(\tau)$ be a family of elements of $\cL_{F'_\mu}$ that satisfies the Divisor Equation, and let $F=F_1 \oplus \dots \oplus F_k$ be a direct sum of line bundles. Then 
\begin{equation}\label{twisted=Deltadeq}
I^{\tw}=\Delta_{F_\lambda}(D_{F_\lambda}(I)).
\end{equation}
As a consequence, $\tau \mapsto I^{\tw}(\tau)$ is a family of elements on the cone $\cL_{F_\lambda \oplus F'_\mu}$.
\end{proposition}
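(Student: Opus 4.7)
The plan is to verify the identity \eqref{twisted=Deltadeq} by an explicit computation, from which the final assertion follows immediately via Corollary~\ref{Ithm}.

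First I would use the Divisor Equation to evaluate $D_{F_\lambda}$ applied to $I$ in closed form. Writing $I(\tau) = \sum_{\beta \in \NE(Y)} Q^\beta I_\beta(\tau, z)$, the hypothesis says precisely that each $I_\beta$ is an eigenvector of the differential operator $z\nabla_{f_i}$ with eigenvalue $f_i + \langle f_i, \beta\rangle z$; since $Q^\beta$ is constant under $\nabla_{f_i}$ this persists for $Q^\beta I_\beta$. Because $G(x,z)$ is a formal power series whose coefficients carry strictly positive $s$-degree in the $\Lambda_s$-filtration, the operator $\exp\!\bigl({-G}(z\nabla_{f_i}, z)\bigr)$ can be expanded as a convergent power series in the $s_j$ and applied termwise, and on an eigenvector it acts by the scalar obtained by substituting the eigenvalue. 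This yields
$$D_{F_\lambda}\bigl(I(\tau)\bigr) \;=\; \sum_{\beta} Q^\beta \prod_{i=1}^k \exp\!\Bigl({-G}\bigl(f_i + \langle f_i, \beta\rangle z,\, z\bigr)\Bigr)\, I_\beta(\tau, z).$$

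Next I would apply the multiplication operator $\Delta_{F_\lambda} = \prod_i \exp(G(f_i, z))$ and combine the exponentials to obtain
$$\Delta_{F_\lambda}\bigl(D_{F_\lambda}(I(\tau))\bigr) \;=\; \sum_{\beta} Q^\beta \prod_{i=1}^k \exp\!\Bigl(G(f_i, z) - G\bigl(f_i + \langle f_i, \beta\rangle z,\, z\bigr)\Bigr)\, I_\beta(\tau, z).$$
The key identification is that this hypergeometric prefactor equals $M_\beta(z)$. This is the telescoping calculation already used in the proof of Lemma~\ref{delta-M}: iterating $G(x+z,z) = G(x,z) + \mathbf{s}(x)$ rewrites the difference $G(f_i, z) - G(f_i + \langle f_i, \beta\rangle z, z)$ as a finite sum of $\mathbf{s}(\cdot)$ terms, whose exponential is exactly the required product of linear factors $\lambda + f_i + mz$ (after the sign bookkeeping that reconciles $M_\beta(z)$ with the $M_\beta(-z)$ appearing in Lemma~\ref{delta-M}). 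Comparison with Definition~\ref{twistedI} then gives $\Delta_{F_\lambda}(D_{F_\lambda}(I(\tau))) = I^{\tw}(\tau)$, establishing \eqref{twisted=Deltadeq}.

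The consequence is immediate: Corollary~\ref{Ithm} tells us that $\tau \mapsto \Delta_{F_\lambda}(D_{F_\lambda}(I(\tau)))$ is a family of elements of $\cL_{F_\lambda \oplus F'_\mu}$, and by what we have just shown this is the family $\tau \mapsto I^{\tw}(\tau)$. The step requiring most care is the first one: replacing the differential operator $z\nabla_{f_i}$ by its scalar eigenvalue inside the formal exponential $\exp(-G(\cdot,z))$ must be justified by working order by order in the $s$-degree filtration on $\Lambda_s$, exactly as in the induction used to prove Lemma~\ref{(3)}. The sign/convention bookkeeping relating $M_\beta(z)$, $M_\beta(-z)$, and the telescoping identity is the other detail needing attention, but once the basic identity $\exp(G(x+Nz,z))/\exp(G(x,z)) = \prod_{m=0}^{N-1}(\lambda + x + mz)$ is in hand, everything is a routine manipulation.
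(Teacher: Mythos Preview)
Your approach is essentially the paper's: use the Divisor Equation to evaluate $D_{F_\lambda}(I)$ termwise as a scalar action on each $I_\beta$, apply $\Delta_{F_\lambda}$, identify the resulting prefactor with $M_\beta$ via the telescoping identity of Lemma~\ref{delta-M}, and conclude by Corollary~\ref{Ithm}. The paper runs the same computation in the reverse direction (starting from $I^{\tw}$ and arriving at $\Delta_{F_\lambda}(D_{F_\lambda}(I))$), and your caution about the $M_\beta(z)$ versus $M_\beta(-z)$ sign bookkeeping is well-placed, as the paper is equally terse on this point.
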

\begin{proof}
Lemma \ref{delta-M} shows that
\begin{equation}\label{imagebeta}
I^{\tw}(\tau) = \Delta_{F_\lambda}\left(\sum_{\beta \in \NE(Y)}\prod_{i=1}^{k} \exp(-G(f_i - \langle f_i, \beta \rangle z, z))I_{\beta}(\tau,z)\right)
\end{equation}
Applying the Divisor Equation, we can rewrite this as
\begin{equation}
I^{\tw}=\Delta_{F_\lambda}(D_{F_\lambda}(I))
\end{equation}
as required. The rest is immediate from \ref{Ithm}.
\end{proof}

\begin{proposition}\label{nonequiexists}
If the line bundles $F_i$ are nef, then the nonequivariant limit $\lambda \rightarrow 0$ of $I^{\tw}(\tau)$ exists.
\end{proposition}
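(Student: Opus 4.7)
My plan is to reduce the statement to a concrete observation about the modification factor $M_\beta(z)$, since the family $I_\beta(\tau,z)$ is independent of the parameter $\lambda$ (it only involves the twisting data $F'_\mu$). Thus the only possible obstruction to taking $\lambda \to 0$ in
\[
I^{\tw}(\tau) = \sum_{\beta \in \NE(Y)} Q^\beta\, I_\beta(\tau,z)\, M_\beta(z)
\]
lives in the coefficients $M_\beta(z)$ themselves, and I need to show each such coefficient is regular at $\lambda = 0$.

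First, I would rewrite the infinite products defining $M_\beta(z)$ as finite products by telescoping. For a single line bundle $F_i$ with $c_1(F_i) = f_i$ and a class $\beta \in \NE(Y)$, I would distinguish two cases. If $\langle f_i, \beta\rangle \geq 0$, the ratio collapses to
\[
\frac{\prod_{m=-\infty}^{\langle f_i,\beta\rangle}(\lambda + f_i + mz)}{\prod_{m=-\infty}^{0}(\lambda + f_i + mz)} = \prod_{m=1}^{\langle f_i,\beta\rangle}(\lambda + f_i + mz),
\]
which is a polynomial in $\lambda$. If instead $\langle f_i, \beta\rangle < 0$, the ratio collapses to $\prod_{m=\langle f_i,\beta\rangle+1}^{0}(\lambda + f_i + mz)^{-1}$, and the factor corresponding to $m=0$ is $(\lambda + f_i)^{-1}$, which is singular at $\lambda = 0$ whenever $f_i$ is nilpotent (for instance, when $f_i$ is pulled back from a positive-dimensional base).

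Next I would invoke the nef hypothesis. Since each $F_i$ is nef and $\beta$ is an element of the Mori cone $\NE(Y)$, we have $\langle f_i, \beta\rangle \geq 0$ for every $i$ and every $\beta$ appearing in the sum. By the case analysis above, this forces
\[
M_\beta(z) = \prod_{i=1}^{k}\prod_{m=1}^{\langle f_i,\beta\rangle}\bigl(\lambda + f_i + mz\bigr),
\]
which is polynomial in $\lambda$, hence has a well-defined limit at $\lambda = 0$, namely $\prod_i\prod_{m=1}^{\langle f_i,\beta\rangle}(f_i + mz)$. Substituting coefficient-by-coefficient in the formal Novikov expansion of $I^{\tw}(\tau)$ then gives the nonequivariant limit.

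There is essentially no serious obstacle here: the content is entirely the telescoping of the product combined with the elementary fact that nef bundles pair nonnegatively with effective classes. The only point that requires a bit of care is to observe that the limit is taken coefficient-wise in $Q^\beta$, so no convergence issue across different curve classes arises.
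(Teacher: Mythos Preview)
Your proposal is correct and takes essentially the same approach as the paper, which simply states that the result is immediate from the definition of $I^{\tw}$. You have spelled out precisely why: the only $\lambda$-dependence is in $M_\beta(z)$, and nefness forces $\langle f_i,\beta\rangle \geq 0$ so that each factor telescopes to a polynomial in $\lambda$.
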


\begin{proof}
	This is immediate from Definition~\ref{twistedI}.
\end{proof}

\section{The Givental--Martin cone}\label{giventalmartincones}
We now restrict to the situation described in the Introduction, where the action of a reductive Lie group $G$ on a smooth quasiprojective variety $A$ leads to smooth GIT quotients $A \GIT G$ and~$A \GIT T$.  As discussed, the roots of $G$ define a vector bundle $\Phi = \oplus_\rho L_\rho \to Y$, where $Y = A \GIT T$, and we consider twisting data $(\Phi, \mathbf{c})$ for $Y$ where $\mathbf{c}$ is the $\CC^\times$-equivariant Euler class. 
We call the modification factor in this setting the \emph{Weyl modification factor}, and denote it as
\begin{equation}\label{modg}
    W_{\beta}(z) = \prod_{\alpha} \frac{\prod_{m=-\infty}^{\langle c_{1}(L_{\alpha}),  \beta \rangle} c_{1}(L_{\alpha}) + \lambda + mz}{\prod_{m=-\infty}^{0} c_{1}(L_{\alpha}) + \lambda + mz}
\end{equation}
where the product runs over all roots $\alpha$.
For any family $\tau \mapsto I(\tau)= \sum_{\beta \in \NE(Y)}Q^{\beta}I_{\beta}(\tau,z)$ of elements of $\cH_Y$, the corresponding twisted $I$-function is 
\begin{equation} \label{general Weyl twist}
	I^{\tw}(\tau) = \sum_{\beta \in \NE(Y)}Q^{\beta}I_{\beta}(\tau,z) \cdot  W_{\beta}(z)
\end{equation}
Since the roots bundle $\Phi$ is not convex, in general the non-equivariant limit $\lambda \to 0$ of $I^{\tw}$ will not exist. Recall from \eqref{quotientH}, however, the map $p \colon \cH^W_{A \GIT T} \to \cH_{A \GIT G}$.
\begin{lemma}\label{IGMexists}
    Suppose that $I$ is Weyl-invariant. Then $p \circ I^\tw$ has a well-defined limit as $\lambda \rightarrow 0$.
\end{lemma}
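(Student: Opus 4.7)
The plan is to analyze the Laurent expansion of $I^\tw$ in $\lambda$ about $\lambda=0$ and show that its negative-power coefficients are annihilated by $p$. The singular behaviour of $W_\beta$ at $\lambda=0$ comes entirely from the $m=0$ factors in denominators $c_1(L_\alpha)+\lambda$ for roots $\alpha$ with $\langle c_1(L_\alpha),\beta\rangle\leq 0$. I would first pair each positive root $\alpha\in\Phi^+$ with $-\alpha$: writing $\xi=c_1(L_\alpha)$ and $d=\langle\xi,\beta\rangle$, a short calculation analogous to Lemma~\ref{delta-M} shows that at $\lambda=0$ the combined contribution of the pair reduces to $(-1)^{|d|}(\xi+dz)/\xi$, so that taking the product over all positive roots yields
$$W_\beta(z)\big|_{\lambda=0} \;=\; \frac{1}{\omega}\prod_{\alpha\in\Phi^+}(-1)^{|d_\alpha|}\bigl(c_1(L_\alpha)+d_\alpha z\bigr), \qquad d_\alpha:=\langle c_1(L_\alpha),\beta\rangle.$$

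The next step exploits the Weyl-invariance hypothesis $I_{w\beta}=w\cdot I_\beta$. Setting $N_\beta(z):=\prod_{\alpha\in\Phi^+}(-1)^{|d_\alpha|}(c_1(L_\alpha)+d_\alpha z)$, one checks directly that $w\cdot N_{w^{-1}\beta}=(-1)^{\ell(w)}N_\beta$, so that $\sum_\beta Q^\beta I_\beta N_\beta$ is Weyl-anti-invariant and the formal ratio $\sum_\beta Q^\beta I_\beta N_\beta/\omega$ is Weyl-invariant. The classical structure theorem---that the module of Weyl-anti-invariants in $H^\bullet(A\GIT T)$ is free of rank one over the Weyl-invariants, generated by $\omega$---descends from the polynomial model via the $W$-equivariance of the defining relations of $A\GIT T$; it lets us write $\sum_\beta Q^\beta I_\beta N_\beta=\omega\cdot M(z)$ for a Weyl-invariant class $M(z)\in H^\bullet(A\GIT T)^W$. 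To treat the full Laurent expansion in $\lambda$ (not just the leading singularity), I would expand each $1/(c_1(L_\alpha)+\lambda)$ as a finite Laurent series with nilpotent coefficients and apply the same anti-invariance-to-invariance argument at each order. The upshot is that every negative Laurent coefficient of $I^\tw$ in $\lambda$ lies in $\Ann(\omega)\cap H^\bullet(A\GIT T)^W$, so $p$ kills them and $p\circ I^\tw$ extends regularly to $\lambda=0$ with limit $p(M(z))$.

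The main obstacle is the order-by-order verification that each pole coefficient lies in $\Ann(\omega)\cap H^\bullet(A\GIT T)^W$. This requires combining the structure theorem for Weyl-invariants with careful bookkeeping of products of nilpotent root classes in $H^\bullet(A\GIT T)$. I expect the cleanest formulation to write $I^\tw$ as $\omega^{-1}$ times a Weyl-anti-invariant Laurent series in $\lambda$ and to apply the structure theorem termwise, turning the desired vanishing modulo $\Ann(\omega)$ into a statement about inverse images of $\omega$-multiplication.
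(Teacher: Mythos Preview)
Your proposal follows essentially the same route as the paper: pair each positive root $\alpha$ with $-\alpha$, observe that the only obstruction to the limit $\lambda\to 0$ is a denominator tending to $\omega$, and then use that the corresponding numerator is Weyl-anti-invariant, hence a multiple of $\omega$, so that Martin's projection $p$ yields a well-defined limit. Your computation of $N_\beta$ and the verification $w\cdot N_{w^{-1}\beta}=(-1)^{\ell(w)}N_\beta$ are exactly the content of the paper's claim that the numerator in its expression~(4.2) is Weyl-anti-invariant.

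Where your packaging diverges is in the Laurent-expansion framework. The paper does not expand in $\lambda$ at all: it first isolates the paired factors $\dfrac{c_1(L_\alpha)+\lambda+mz}{-c_1(L_\alpha)+\lambda-mz}$, each of which is \emph{regular} at $\lambda=0$ with value $-1$, and then writes what remains as a single ratio whose numerator limit is anti-invariant and whose denominator limit is $\omega$. The ``quotient'' is then interpreted directly in $H^\bullet(A\GIT T)^W/\Ann(\omega)\cong H^\bullet(A\GIT G)$. This bypasses your ``main obstacle'' entirely: there is no need to show that each individual negative Laurent coefficient of $I^\tw$ lies in $\Ann(\omega)$, only that the \emph{single} constant term of the numerator is divisible by $\omega$. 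Your proposed term-by-term argument would work, but it is harder than necessary, and the claim that each negative coefficient lies in $\Ann(\omega)$ is not obviously equivalent to the anti-invariance of $N_\beta$ alone---it also uses the full Weyl-invariance of $I^\tw$ at positive orders in $\lambda$, which you would need to track explicitly. The paper's direct limit argument is both shorter and avoids this bookkeeping.
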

\begin{proof}
	The map $p$ is given by the composition of the map on Novikov rings induced by $$\varrho \colon \NE(A \GIT T) \to \NE(A \GIT G)$$ (see Proposition~\ref{maponmori}) with the projection map $H^\bullet(A \GIT T; \CC)^W \to H^\bullet(A \GIT G; \CC)$ (see Theorem~\ref{thm:Martin}). Since $I(\tau)$ is Weyl-invariant, $I^\tw(\tau)$ is also Weyl invariant and so, after applying $\varrho$, the coefficient of each Novikov term $Q^\beta$ in $\tau \mapsto I^\tw(\tau)$ lies in $H^\bullet(A \GIT T; \CC)^W$. The composition $p \circ I^{\tw}$ is therefore well-defined.

	The Weyl modification \eqref{modg} contains many factors
	$$
	\frac{c_1(L_\alpha) + \lambda + m z}{- c_1(L_\alpha) + \lambda - m z}
	$$
	which arise by combining the terms involving roots $\alpha$ and $-\alpha$. Such factors have a well-defined limit, $-1$, as $\lambda \to 0$. Therefore the limit of $p \circ I^\tw$ as $\lambda \to 0$ is well-defined if and only if the limit of
	\begin{equation} \label{p of I intermediate}
		p \left( \sum_{\beta \in \NE(Y)}Q^{\beta}I_{\beta}(\tau,z) \cdot  (-1)^{\epsilon(\beta)}\prod_{\alpha \in \Phi^+} \frac{ c_1(L_\alpha) \pm \lambda + \langle c_1(L_\alpha), \beta \rangle z}{c_1(L_\alpha) \mp \lambda}  \right)
	\end{equation}
	as $\lambda \to 0$ is well-defined, and the two limits coincide. Here $\Phi^+$ is the set of positive roots of $G$, and $\epsilon(\beta) = \sum_{\alpha \in \Phi^+} \langle c_1(L_\alpha), \beta \rangle$; cf.~\cite[equation 3.2.1]{CFKS2008}. The limit $\lambda \to 0$ of the denominator terms 
	$$
	\prod_{\alpha \in \Phi^+} \big(c_1(L_\alpha) - \lambda\big)
	$$ 
	in \eqref{p of I intermediate} is the fundamental Weyl-anti-invariant class $\omega$ from the discussion before Theorem~\ref{thm:Martin}. Furthermore
	$$ \sum_{\beta \in \NE(Y)}Q^{\beta}I_{\beta}(\tau,z) \cdot  (-1)^{\epsilon(\beta)}\prod_{\alpha \in \Phi^+} \big(c_1(L_\alpha) + \lambda + \langle c_1(L_\alpha), \beta \rangle z\big)$$
	has a well-defined limit as $\lambda \to 0$ which, as it is Weyl-anti-invariant, is divisible by $\omega$. The quotient here is unique up to an element of $\Ann(\omega)$, and therefore the projection of the quotient along Martin's map $H^\bullet(A \GIT T; \CC)^W \to H^\bullet(A \GIT G; \CC)$ is unique. It follows that the limit as $\lambda \to 0$ of $p \circ I^\tw$ is well-defined.
\end{proof}

\begin{definition} \label{IGM definition}
	Let $\tau \mapsto I(\tau)$ be a Weyl-invariant family of elements of $\cH_Y$ and let $I^{\tw}$ denote the twisted $I$-function as above. We call the nonequivariant limit of $\tau \mapsto p \big(I^{\tw}(\tau)\big)$ the \emph{Givental--Martin modification} of the family $\tau \mapsto I(\tau)$, and denote it by $\tau \mapsto I_{\GM}(\tau)$
\end{definition}

Recall that we have fixed a representation $\rho$ of $G$ on a vector space $V$, and that this induces vector bundles $V^T \to A \GIT T$ and $V^G \to A \GIT G$. Since the bundle $\Phi \to A \GIT T$ is not convex, one cannot expect the non-equivariant limit of $\cL_{ \Phi_\lambda \oplus V^T_\mu}$ to exist. Nonetheless, the projection along \eqref{quotientH} of the Weyl-invariant part of $\cL_{ \Phi_\lambda \oplus V^T_\mu}$ does admit a non-equivariant limit.
\begin{theorem}\label{GMlimit} 
	The non-equivariant limit $\lambda \to 0$ of $p \left(\cL_{\Phi_\lambda \oplus V^T_\mu} \cap \cH^{W}_{A \GIT T} \right)$ exists.
\end{theorem}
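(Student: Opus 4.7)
The plan is to combine the Quantum Riemann--Roch theorem (Theorem~\ref{QRR}) with the Weyl-modification machinery of Lemma~\ref{IGMexists}. By Theorem~\ref{QRR}, $\cL_{\Phi_\lambda \oplus V^T_\mu} = \Delta_{\Phi_\lambda}(\cL_{V^T_\mu})$. The operator $\Delta_{\Phi_\lambda}$ is $W$-equivariant, since the multiset of Chern roots used in Definition~\ref{delta} is indexed by the $W$-stable set of all roots of $G$. Intersection with $\cH^W_{A \GIT T}$ therefore commutes with $\Delta_{\Phi_\lambda}$, and
\[
  \cL_{\Phi_\lambda \oplus V^T_\mu} \cap \cH^W_{A \GIT T} = \Delta_{\Phi_\lambda}\bigl(\cL_{V^T_\mu} \cap \cH^W_{A \GIT T}\bigr).
\]
Since the right-hand side is independent of $\lambda$, the theorem reduces to showing that $p \circ \Delta_{\Phi_\lambda}$ admits a well-defined $\lambda \to 0$ limit on Weyl-invariant elements of $\cL_{V^T_\mu}$.

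Next, I would parametrize $\cL_{V^T_\mu}$, in the formal-germ sense, via its twisted $J$-function $\tau \mapsto J_{V^T_\mu}(\tau, -z)$ restricted to $\tau \in H^{\bullet}(A \GIT T)^W$; this is a Weyl-invariant family satisfying the Divisor Equation. By Lemma~\ref{(3)}, $D_{\Phi_\lambda}\bigl(J_{V^T_\mu}\bigr)$ is again a Weyl-invariant family of elements of $\cL_{V^T_\mu}$, and Proposition~\ref{twisted=Deltad} identifies
\[
  \Delta_{\Phi_\lambda}\bigl(D_{\Phi_\lambda}\bigl(J_{V^T_\mu}(\tau, -z)\bigr)\bigr) = \bigl(J_{V^T_\mu}\bigr)^{\tw}(\tau, -z),
\]
the Weyl-modified twisted $J$-function. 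Lemma~\ref{IGMexists} applied to the Weyl-invariant family $J_{V^T_\mu}$ then guarantees that $p$ of the right-hand side has a well-defined limit as $\lambda \to 0$, so $p \circ \Delta_{\Phi_\lambda}$ has a non-equivariant limit on the family $D_{\Phi_\lambda}\bigl(J_{V^T_\mu}\bigr)$.

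Finally, I would propagate this limit to the whole Weyl-invariant cone using the ruling of Givental cones by $zT_f$-subspaces recalled in \S\ref{givental formalism}. Any $h' \in \cL_{V^T_\mu} \cap \cH^W_{A \GIT T}$ should be expressible, as a formal germ, as $\sum_i z\, C_i(z)\, \partial_{\tau^i} J_{V^T_\mu}(\tau, -z)$ for Weyl-invariant $\tau$ and coefficients $C_i(z) \in \CC[z]$. Because both $\Delta_{\Phi_\lambda}$ and $p$ are $\CC[z]$-linear and commute with the parameter-derivatives $\partial_{\tau^i}$, the existence of the $\lambda \to 0$ limit transfers from the $J$-function sub-family to every such $h'$, yielding the theorem.

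The main obstacle is making the last ``generation by the $J$-function'' step rigorous in the Weyl-invariant setting and in the Novikov completion -- the small twisted $J$-function by itself only controls a formal neighbourhood of a base point, and one must verify that Weyl-invariant tangent-space sweeps exhaust the whole Weyl-invariant intersection. In practice this is cleanest to arrange by replacing $J_{V^T_\mu}$ with the richer twisted Brown $I$-function (to be introduced in later sections), whose tangent-space sweep covers $\cL_{V^T_\mu}$ uniformly and which is already Weyl-invariant and satisfies the Divisor Equation, so that the argument above applies verbatim.
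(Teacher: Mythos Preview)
Your overall strategy---reduce via Quantum Riemann--Roch, invoke the Weyl modification $I^{\tw}$ of the twisted $J$-function, and appeal to Lemma~\ref{IGMexists}---matches the paper's. But there is a genuine gap in the final propagation step, and it is not the one you flag.

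You establish that $p\bigl(\Delta_{\Phi_\lambda}\bigl(D_{\Phi_\lambda}(J_{V^T_\mu})\bigr)\bigr)=p(I^{\tw})$ has a $\lambda\to 0$ limit. In the last paragraph, however, you write a fixed $h'\in\cL_{V^T_\mu}\cap\cH^W$ as a $\CC[z]$-combination of $\partial_{\tau^i}J_{V^T_\mu}(\tau,-z)$ and then apply $p\circ\Delta_{\Phi_\lambda}$. For this to work you would need the limit of $p\bigl(\Delta_{\Phi_\lambda}(J_{V^T_\mu})\bigr)$, not of $p\bigl(\Delta_{\Phi_\lambda}(D_{\Phi_\lambda}(J_{V^T_\mu}))\bigr)$; the operator $D_{\Phi_\lambda}$ is not the identity, and nothing you have done shows that $p\circ\Delta_{\Phi_\lambda}$ applied to the bare $J$-function family converges. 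So the transfer from ``the $J$-function sub-family'' to general $h'$ does not go through as written. A related issue is that your reduction reinterprets ``the limit of the cone exists'' as pointwise convergence of $p\circ\Delta_{\Phi_\lambda}$ on $\lambda$-independent points of $\cL_{V^T_\mu}$, which is a different (and not obviously equivalent) statement from the one the paper actually proves.

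The paper closes the gap by staying on the $\Phi_\lambda\oplus V^T_\mu$-twisted side throughout and parametrising points of $\cL_{\Phi_\lambda\oplus V^T_\mu}$ by their $\cH_+$-parts $\bt$. Since $I^{\tw}\equiv J_{V^T_\mu}$ modulo Novikov variables, $I^{\tw}$ itself serves as a generating family for the twisted cone: a general Weyl-invariant point is written as $I^{\tw}\bigl(\tau(\bt)^\dagger\bigr)+\sum_\alpha C_\alpha(\bt,z)^\dagger\, z\,\partial_{\tau^\alpha}I^{\tw}\bigl(\tau(\bt)^\dagger\bigr)$ with $\lambda$-dependent data $\tau(\bt)^\dagger$,~$C_\alpha^\dagger$. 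After applying $p$, the known limit of $p\circ I^{\tw}$ forces the projected coefficients to have limits as well. In your language this amounts to expanding $h'$ in terms of $D_{\Phi_\lambda}(J_{V^T_\mu})$ rather than $J_{V^T_\mu}$, and then controlling the resulting $\lambda$-dependence of the coefficients---precisely the step your sketch omits. Your closing suggestion to substitute the Brown $I$-function does not help here either, since Theorem~\ref{GMlimit} is stated for general $A\GIT G$, not only for flag bundles.
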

\noindent We call this non-equivariant limit the \emph{twisted Givental--Martin cone} $\cL_{\GM, V^T_\mu} \subset \cH^{W}_{A\GIT T}$. 

\begin{proof}[Proof of Theorem~\ref{GMlimit}]
	Recall the twisted $J$-function $J_{V^T_\mu}(\tau,{-z})$ from Definition~\ref{twisted J}. By~\cite{CoatesGivental2007} a general point
	$$ {-z} + t_0 + t_1 z + \cdots + O(z^{-1}) $$
	on $\cL_{V^T_\mu}$ can be written as 
	$$ J_{V^T_\mu} \big(\tau(\bt),{-z}\big) + \sum_{\alpha=1}^N C_\alpha(\bt, z) z \frac{\partial J_{V^T_\mu}}{\partial \tau^\alpha}\big(\tau(\bt), {-z}\big)
	$$
	for some coefficients $C_\alpha(\bt, z)$ that depend polynomially on $z$ and some $H^\bullet(A \GIT T)$-valued
	function $\tau(\bt)$ of $\bt = (t_0,t_1,\ldots)$. The Weyl modification $\tau \mapsto I^\tw(\tau)$ of $\tau \mapsto J_{V^T_\mu}(\tau,-z)$ satisfies $I^\tw(\tau) \equiv J_{V^T_\mu}(\tau,{-z})$ modulo Novikov variables, and $I^\tw(\tau) \in \cL_{\Phi_\lambda \oplus V^T_\mu}$ by Proposition~\ref{twisted=Deltad}, so a general point
	\begin{equation} \label{general point on equivariant Weyl}
		{-z} + t_0 + t_1 z + \cdots + O(z^{-1})
	\end{equation}
	on $\cL_{\Phi_\lambda \oplus V^T_\mu}$ can be written as
	$$ I^\tw \big(\tau(\bt)^\dagger,{-z}\big) + \sum_{\alpha=1}^N C_\alpha(\bt, z)^\dagger z \frac{\partial I^\tw}{\partial \tau^\alpha}\big(\tau(\bt)^\dagger, {-z}\big) $$
	for some coefficients $C_\alpha(\bt, z)^\dagger$ that depend polynomially on $z$ and some $H^\bullet(A \GIT T)$-valued
	function $\tau(\bt)^\dagger$.  Since the twisted $J$-function is Weyl-invariant, so is $I^\tw(\tau)$, and thus if \eqref{general point on equivariant Weyl} is Weyl-invariant then we may take $C_\alpha(\bt, z)^\dagger$ to be such that $\sum_\alpha C_\alpha(\bt, z)^\dagger \phi_\alpha$ is Weyl-invariant. Projecting along \eqref{quotientH} we see that a general point
	\begin{equation} \label{general point on equivariant GM}
		{-z} + t_0 + t_1 z + \cdots + O(z^{-1})
	\end{equation}
	on $p \left(\cL_{\Phi_\lambda \oplus V^T_\mu} \cap \cH^{W}_{A \GIT T} \right)$ can be written as 
	$$ p \circ I^\tw \big(\tau(\bt)^\ddagger,{-z}\big) + \sum_{\alpha=1}^N C_\alpha(\bt, z)^\ddagger z \frac{\partial (p \circ I^\tw)}{\partial \tau^\alpha}\big(\tau(\bt)^\ddagger, {-z}\big) $$
	for some coefficients $C_\alpha(\bt, z)^\ddagger$ that depend polynomially on $z$ and some $H^\bullet(A \GIT T)$-valued
	function $\tau(\bt)^\ddagger$. Furthermore, since $p \circ I^\tw(\tau)$ has a well-defined non-equivariant limit $I_{\GM}(\tau)$, we see that $C_{\alpha}(\bt, z)^\ddagger$ also admits a non-equivariant limit. Hence a general point \eqref{general point on equivariant GM} on $p \left(\cL_{\Phi_\lambda \oplus V^T_\mu} \cap \cH^{W}_{A \GIT T} \right)$ has a well-defined limit as $\lambda \to 0$.
\end{proof}

\begin{corollary} \label{GMlimit no bundle}
	The non-equivariant limit $\lambda \to 0$ of $p \left(\cL_{\Phi_\lambda} \cap \cH^{W}_{A \GIT T} \right)$ exists.  
\end{corollary}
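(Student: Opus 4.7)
The plan is to deduce this directly from Theorem~\ref{GMlimit} by specialising the representation $\rho$ to the zero representation (equivalently, taking $V$ to be the zero vector space). In that case the induced vector bundles $V^T \to A \GIT T$ and $V^G \to A \GIT G$ both have rank zero, and the equivariant Euler class twisting data $V^T_\mu$ is trivial in the sense that the associated Quantum Riemann--Roch operator $\Delta_{V^T_\mu}$ is the identity. Consequently $\cL_{\Phi_\lambda \oplus V^T_\mu} = \cL_{\Phi_\lambda}$ as Lagrangian submanifolds of $\cH_{A \GIT T}$, and Theorem~\ref{GMlimit} immediately gives the existence of the non-equivariant limit of $p \bigl(\cL_{\Phi_\lambda} \cap \cH^W_{A \GIT T}\bigr)$.

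Alternatively, and more transparently, one can run the proof of Theorem~\ref{GMlimit} verbatim but with the $J$-function $J_Y(\tau, -z)$ in place of $J_{V^T_\mu}(\tau, -z)$, and with the (untwisted) Weyl modification $I^{\tw}$ of $J_Y$ (as in~\eqref{general Weyl twist}) in place of the $\Phi_\lambda \oplus V^T_\mu$-twisted one. The key structural ingredients carry over: $J_Y$ is Weyl-invariant and satisfies the Divisor Equation, so $I^{\tw}$ is a Weyl-invariant family of elements of $\cL_{\Phi_\lambda}$ by Proposition~\ref{twisted=Deltad}; Lemma~\ref{IGMexists} ensures that $p \circ I^{\tw}$ admits a non-equivariant limit; and the characterisation of points of the Lagrangian cone in terms of the $J$-function plus its $z$-derivatives~\cite{CoatesGivental2007} lets one write any Weyl-invariant point of $\cL_{\Phi_\lambda}$ in terms of $I^{\tw}$ and its $\tau$-derivatives with coefficients polynomial in $z$.

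The only real subtlety, and hence the step I expect to require a moment's care, is verifying that specialising to $V = 0$ in Theorem~\ref{GMlimit} is legitimate: one must check that the proof of Theorem~\ref{GMlimit} does not implicitly use that $V$ is nonzero (it does not, since the only property used of $V^T_\mu$ is that $\cL_{V^T_\mu}$ is characterised as graph of the differential of the twisted genus-zero potential and that $\Delta_{V^T_\mu} \cL_Y = \cL_{V^T_\mu}$, both of which are trivially true when $V = 0$). Once this is noted, the corollary is immediate.
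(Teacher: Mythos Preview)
Your proposal is correct and matches the paper's own proof exactly: the paper simply says ``Take the vector bundle $V^T$ in Theorem~\ref{GMlimit} to have rank zero.'' Your additional checks (that the proof of Theorem~\ref{GMlimit} nowhere requires $V \ne 0$) and your alternative direct argument are sound but unnecessary elaborations on what is, in the paper, a one-line specialisation.
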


\noindent We call this non-equivariant limit the \emph{Givental--Martin cone} $\cL_{\GM} \subset \cH^{W}_{A\GIT T}$. 

\begin{proof}
	Take the vector bundle $V^T$ in Theorem~\ref{GMlimit} to have rank zero.
\end{proof}

\begin{corollary}\label{IGMonLGM}
	If $\tau \mapsto I(\tau)$ is a Weyl-invariant family of elements of $\cL_{V^T_\mu}$ that satisfies the Divisor Equation \eqref{divisor equation} then the Givental--Martin modification $\tau \mapsto I_{\GM}(\tau)$ is a family of elements of~$\cL_{\GM, V^T_\mu}$
\end{corollary}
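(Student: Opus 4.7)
The plan is to assemble the ingredients already developed in the paper rather than to do anything essentially new, since the corollary is really a compatibility statement between the twisted $I$-function construction of \S\ref{I-functions} and the Givental--Martin limit construction of \S\ref{giventalmartincones}.

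First I would apply Proposition~\ref{twisted=Deltad} with twisting bundle $F = \Phi = \bigoplus_{\alpha} L_\alpha$ (which is manifestly a direct sum of line bundles) and $F' = V^T$. Since $\tau \mapsto I(\tau)$ is assumed to lie on $\cL_{V^T_\mu}$ and to satisfy the Divisor Equation, the proposition gives
\[
    I^{\tw}(\tau) \;=\; \sum_{\beta \in \NE(A\GIT T)} Q^{\beta}\, I_{\beta}(\tau,z)\, W_{\beta}(z) \;\in\; \cL_{\Phi_\lambda \oplus V^T_\mu},
\]
with $W_\beta(z)$ the Weyl modification factor \eqref{modg}. This handles the ``cone membership'' half of the argument at finite $\lambda$.

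The second step is to verify that $\tau \mapsto I^{\tw}(\tau)$ remains Weyl-invariant. This is a direct bookkeeping check: for $w \in W$, the action permutes roots $\alpha \mapsto w\alpha$, the line bundles transform as $w \cdot c_1(L_\alpha) = c_1(L_{w\alpha})$, and the pairing $\langle \cdot, \cdot \rangle$ is $W$-equivariant, so that $w \cdot W_\beta(z) = W_{w\beta}(z)$ after reindexing the product over roots. Combined with the assumed Weyl-invariance of $I$ (which forces $w \cdot I_\beta = I_{w\beta}$), one reindexes the sum over $\beta$ to conclude $w \cdot I^{\tw} = I^{\tw}$. Thus $I^{\tw}(\tau) \in \cL_{\Phi_\lambda \oplus V^T_\mu} \cap \cH^{W}_{A\GIT T}$, so $p\big(I^{\tw}(\tau)\big) \in p\big(\cL_{\Phi_\lambda \oplus V^T_\mu} \cap \cH^{W}_{A\GIT T}\big)$ for each generic $\lambda$.

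Finally I would pass to the non-equivariant limit $\lambda \to 0$. By Lemma~\ref{IGMexists} the limit of $p \circ I^{\tw}$ exists and by definition equals $I_{\GM}(\tau)$. By Theorem~\ref{GMlimit} the limit of $p\big(\cL_{\Phi_\lambda \oplus V^T_\mu} \cap \cH^{W}_{A\GIT T}\big)$ exists and equals $\cL_{\GM, V^T_\mu}$. Since the point $p(I^{\tw}(\tau))$ sits inside the family of sets for each $\lambda$, its limit sits in the limit set: $I_{\GM}(\tau) \in \cL_{\GM, V^T_\mu}$, proving the corollary.

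I do not expect any serious obstacle. The only slightly delicate point is the Weyl-invariance computation for $W_\beta(z)$, and the bookkeeping argument sketched above already resolves it; everything else is a formal consequence of Proposition~\ref{twisted=Deltad}, Lemma~\ref{IGMexists}, and Theorem~\ref{GMlimit}. The first assertion of the statement preceding the corollary (the case $F' = 0$, used in the earlier part of the Introduction) is then obtained simply by taking $V^T$ of rank zero, exactly as in Corollary~\ref{GMlimit no bundle}.
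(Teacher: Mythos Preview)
Your proposal is correct and follows essentially the same route as the paper: apply Proposition~\ref{twisted=Deltad} with $F=\Phi$ to place $I^{\tw}$ on $\cL_{\Phi_\lambda\oplus V^T_\mu}$, project along $p$, and pass to the limit $\lambda\to 0$ via Lemma~\ref{IGMexists}. Your version is simply more explicit than the paper's two-line proof---you spell out the Weyl-invariance of $I^{\tw}$ (which the paper handles inside the proof of Lemma~\ref{IGMexists}) and you invoke Theorem~\ref{GMlimit} to name the limiting cone---but the logical skeleton is identical.
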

\begin{proof}

	Proposition~\ref{twisted=Deltad} implies that 
$\tau \mapsto I^\tw(\tau, -z)$ is a family of elements on $\cL_{  \Phi_{\lambda} \oplus V^T_\mu}$. Projecting along \eqref{quotientH} and taking the limit $\lambda \rightarrow 0$, which exists by Lemma~\ref{IGMexists}, proves the result.
\end{proof}

This completes the results required to state the Abelian/non-Abelian Correspondence (Conjectures~\ref{AnA} and~\ref{AnA family}) and the Abelian/non-Abelian Correspondence with bundles (Conjectures~\ref{AnA bundles} and~\ref{AnA bundles family}).

\section{The Abelian/non-Abelian Correspondence for Flag Bundles}

\subsection{The Work of Brown and Oh}\label{brownohwn}
In this section we will review results by Brown~\cite{Brown2014} and Oh~\cite{Oh2016}, and situate their work in terms of the Abelian/non-Abelian Correspondence (Conjecture~\ref{AnA family}). In particular, we show that the Givental--Martin modification of the Brown $I$-function is the Oh $I$-function. 
We freely use the notation introduced in Section~\ref{notation}. 

Let $X$ be a smooth projective variety. We will decompose the $J$-function of $X$, defined in~\S\ref{J}, into contributions from different degrees:
\begin{equation} \label{JX by degrees}
	J_{X}(\tau,z)= \sum_{D \in \NE(X)} J_X^{D}(\tau,z) Q^{D}.
\end{equation}
Recall that we have a direct sum of line bundles $E = L_1 \oplus \dots \oplus L_n \xrightarrow{\pi} X$, and that $\Fl(E) = \Fl(r_1, \dots, r_\ell,E) = A \GIT G$ is the partial flag bundle associated to $E$. As in \S\ref{flag}, we form the toric fibration $\Fl(E)_T = A \GIT T$ with general fibre $\CC^N \GIT (\CC^\times)^R$. We denote both projection maps $\Fl(E) \to X$ and $\Fl(E)_T \to X$ by $\pi$. For the sake of clarity, we will denote homology and cohomology classes on $\Fl(E)_T$ with a tilde and classes on $\Fl(E)$ without. Recall the cohomology classes $\tilde{H}_{\ell+1, j}=-\pi^*c_1(L_j)$ on $\Fl(E)_T$, and  $H_{\ell + 1,j}=-\pi^*c_1(L_j)$ on $\Fl(E)$. 
For a fixed homology class $\tilde{\beta}$ on $\Fl(E)_T$ define $d_{\ell +1,j} =  \langle -\pi^*c_{1}(L_j), \tilde{\beta} \rangle$, and for a fixed homology class $\beta$ on $\Fl(E)$ define $d_{\ell +1,j} =  \langle -\pi^*c_{1}(L_j), \beta \rangle$. We use the indexing of the set $\{1, \dots, R\}$ defined in Section~\ref{notation}, and denote the components of a vector $\underline{d} \in \ZZ^R$ by $d_{i,j}$. Similarly, we denote components of a vector $\underline{d} \in \ZZ^\ell$ by $d_i$. 

In \cite{Oh2016}, the author proves that a certain generating function, the $I$-function of $\Fl(E)$, lies on the Lagrangian cone for $\Fl(E)$.
\begin{theorem}\label{ohI} 
Let $\tau \in H^{\bullet}(X)$, $t=\sum_{i}t_{i}c_1(S_i^\vee)$, and define the $I$-function of $\Fl(E)$ to be 
\begin{multline*}
	I_{\Fl(E)}(t, \tau,z) = \\
	e^{\frac{t}{z}} \sum_{\beta \in {\NE}(\Fl(E))} 
	Q^{\beta} e^{\langle \beta, t \rangle} \pi^{*}J_{X}^{\pi_*\beta}(\tau,z)  
	\sum_{\substack{\underline{d} \in \ZZ^R\colon \\ 
	\forall i \sum_j d_{i,j}=\langle \beta, c_1(S_i^\vee) \rangle}} 
	\prod_{i=1}^{ \ell} 
	\prod_{j=1}^{r_{i}}\prod_{j'=1}^{r_{i+1}} \frac{\prod_{m=-\infty}^{0}H_{i,j} - H_{i+1,j'} + mz}{\prod_{m=-\infty}^{d_{i,j} - d_{i+1,j'}}H_{i,j} - H_{i+1,j'} + mz}\\ 
	\times \prod_{i=1}^\ell \prod_{j \neq j'} 
	\frac{\prod_{m=-\infty}^{d_{i,j} - d_{i,j'}}H_{i,j} - H_{i,j'} + mz}{\prod_{m=-\infty}^{0} H_{i,j} -H_{i,j'} + mz}
\end{multline*}
Then $I_{\Fl(E)}(t,\tau,-z) \in \cL_{\Fl(E)}$ for all $t$ and $\tau$. 
\end{theorem}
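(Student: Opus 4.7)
The plan is to follow the quasimap graph space approach of Ciocan-Fontanine--Kim--Maulik. Since $\Fl(E) = A \GIT G$ with $G = \prod_i \GL_{r_i}(\CC)$ a smooth projective GIT quotient of the bundle \eqref{eq:hom_bundle}, I would form the moduli space $QG_{0,0,\beta}(\Fl(E))$ of genus-zero stable quasimaps from a source curve $C$ of class $\beta$, together with a degree-one identification $C \to \PP^1$. This graph space carries a perfect obstruction theory and hence a virtual class, and it comes equipped with a $\Cstar$-action induced by rotating the target $\PP^1$.

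The next step is Graber--Pandharipande virtual localization with respect to this $\Cstar$. The fixed loci are indexed by distributions of the degree among base points at $0, \infty \in \PP^1$ and honest maps on $\PP^1 \setminus \{0, \infty\}$; the distinguished locus $F_\beta$ consists of a full degree-$\beta$ base point at $\infty$ together with a honest stable map to $\Fl(E)$ elsewhere. Since $\Fl(E) \to X$ is a smooth fibration whose Grassmannian fibres have vanishing higher cohomology for tautological bundles, honest maps of class $\beta$ are rigidified by the data of a genus-zero stable map to $X$ of class $\pi_*\beta$ together with evaluation data in $\Fl(E)$; virtual pushforward from this component produces the factor $\pi^* J_X^{\pi_*\beta}(\tau, z)$. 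The heart of the calculation is then the equivariant inverse Euler class of the virtual normal bundle at $F_\beta$. Using the presentation of $T_{\Fl(E)/X}$ in terms of sheaf-Homs of the tautological bundles $\cS_i$, this normal bundle decomposes into $\Cstar$-weight spaces: each pair $(i,j,j')$ with $1 \le i \le \ell$, $1 \le j \le r_i$, $1 \le j' \le r_{i+1}$ contributes the ratio $\prod_{m=-\infty}^{0}(H_{i,j} - H_{i+1,j'} + mz) / \prod_{m=-\infty}^{d_{i,j}-d_{i+1,j'}}(H_{i,j} - H_{i+1,j'} + mz)$, while each pair $(i, j, j')$ with $j \ne j'$ in $\{1,\ldots,r_i\}$ contributes the analogous ratio built from $H_{i,j} - H_{i,j'}$, arising from the Lie-algebra directions $\mathfrak{g}/\mathfrak{t}$ in the adjoint representation. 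The inner sum over $\underline{d}\in\ZZ^R$ with $\sum_j d_{i,j} = \langle \beta, c_1(\cS_i^\vee)\rangle$ records the distribution of the total base-point degree $\beta$ amongst the $R$ weight directions of the maximal torus $T \subset G$, and the prefactors $e^{t/z}\, e^{\langle \beta,t\rangle}$ come from the standard string/divisor equation manipulation for the divisor insertions $t = \sum_i t_i\, c_1(\cS_i^\vee)$.

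The main obstacle is organising the contributions from the other $\Cstar$-fixed loci (those with honest components meeting $\infty$, or with base points split between $0$ and $\infty$) and showing that, after summing, they land in the tangent directions $zT_f \cL_{\Fl(E)}$ to the Lagrangian cone rather than contributing new transverse pieces -- this is what lets one conclude that $I_{\Fl(E)}(t,\tau,-z) \in \cL_{\Fl(E)}$ via the quasimap $I$-function criterion of Ciocan-Fontanine--Kim. A more indirect alternative would be to start from Brown's toric-bundle $I$-function for $\Fl(E)_T$, apply the Weyl modification \eqref{modg}, and project along \eqref{quotientH}, recognising Oh's formula as the Givental--Martin modification of Brown's; however, deducing cone-membership that way is essentially the content of Theorem~\ref{brownoh AnA}, which itself uses Theorem~\ref{ohI} as input, so this route is circular in the present context.
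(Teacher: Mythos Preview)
The paper does not prove Theorem~\ref{ohI}; it simply quotes the result from Oh~\cite{Oh2016}. So the relevant comparison is between your sketch and Oh's own argument.

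Your proposal follows the Ciocan-Fontanine--Kim quasimap graph space paradigm: build $QG_{0,0,\beta}(\Fl(E))$, localise with respect to the rotation $\Cstar$, and read off the $I$-function as the residue at the distinguished fixed locus. This is a legitimate strategy, and indeed Oh establishes the graph-space interpretation of his $I$-function in \cite[Proposition~5.1]{Oh2016}. However, the main proof in~\cite{Oh2016} that $I_{\Fl(E)}(t,\tau,-z)\in\cL_{\Fl(E)}$ does \emph{not} proceed this way. It follows Brown's template~\cite{Brown2014}: work equivariantly for the fibrewise torus $(\Cstar)^n$ coming from the splitting $E=L_1\oplus\cdots\oplus L_n$, restrict to the fixed-point sections of $\Fl(E)\to X$, and verify that the restricted series satisfy (i) a recursion relation matching the virtual-localisation recursion for $J_{\Fl(E)}$ along the one-dimensional $(\Cstar)^n$-orbits in the fibres, and (ii) a polynomiality (``double construction'') condition. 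These two conditions are Givental's characterisation of points on the Lagrangian cone, so together they force $I_{\Fl(E)}(t,\tau,-z)\in\cL_{\Fl(E)}$. The difference is that Oh's argument never needs to organise the full fixed-locus decomposition of a quasimap graph space; it reduces everything to combinatorics of hypergeometric factors plus the known $J$-function of the base $X$.

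Your sketch has a genuine soft spot at exactly the step you flag: ``organising the contributions from the other $\Cstar$-fixed loci \ldots and showing that \ldots they land in the tangent directions $zT_f\cL_{\Fl(E)}$''. In the CFK framework this is the content of the quasimap wall-crossing/mirror theorem, and for a GIT quotient of a vector \emph{bundle} over $X$ (rather than of a vector space) it requires a relative version of the quasimap theory and a comparison of virtual classes over $X$; this is not automatic and is precisely what Oh's recursion/polynomiality argument sidesteps. Your description of the distinguished-locus contribution is also a bit optimistic: the factor $\pi^*J_X^{\pi_*\beta}$ does not arise just from ``rigidifying honest maps by a map to $X$''---one needs the evaluation-map compatibility between the relative quasimap space and $X_{0,1,\pi_*\beta}$, which again is part of the relative machinery. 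Finally, you are right that the Brown-plus-Weyl-modification route is circular here: Proposition~\ref{brownoh} only identifies $I_{\GM}$ with $I_{\Fl(E)}$ as formal series, and cone-membership is then deduced in Theorem~\ref{IGM on twisted cone} \emph{using} Theorem~\ref{ohI}.
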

In \cite{Brown2014}, the author proves an analogous result for the corresponding Abelian quotient $\Fl(E)_T$.

\begin{theorem}\label{brown2014gromov}
Let $\tau \in H^{\bullet}(X)$, $t=\sum_{i,j}t_{i,j}\tilde{H}_{i,j}$, and define the Brown $I$-function of $\Fl(E)_T$ to be
\begin{multline*}
I_{\Fl(E)_T}(t, \tau,z) = \\ 
e^{\frac{t}{z}} \sum_{\tilde{\beta} \in H_2\Fl(E)_T } Q^{\tilde{\beta}} e^{\langle \tilde{\beta}, t \rangle} \pi^{*}J_{X}^{\pi_* \tilde{\beta}}(\tau, z)
\prod_{i=1}^{\ell} \prod_{j=1}^{r_i}\prod_{j'=1}^{r_{i+1}}\frac{  \prod_{m=-\infty}^{0}\tilde{H}_{i,j} - \tilde{H}_{i+1,j'} + mz}
{\prod_{m=-\infty}^{\langle \tilde{\beta}, \tilde{H}_{i,j}  - \tilde{H}
_{i+1,j'} \rangle }\tilde{H}_{i,j} - \tilde{H}_{i+1,j'} + mz}
\end{multline*}
Then $I_{\Fl(E)_T}(t,\tau,-z) \in \cL_{\Fl(E)_T}$ for all $t$ and $\tau$.

\end{theorem}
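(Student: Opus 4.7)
The plan is to prove Theorem~\ref{brown2014gromov} via torus localisation on a graph space, following the strategy introduced by Givental for toric targets and adapted by Brown to toric fibrations. The input is the known $J$-function $J_X$ of the base $X$; the output is an $H^\bullet(\Fl(E)_T;\Lambda)$-valued family of points on $\cL_{\Fl(E)_T}$, which we match term-by-term with the claimed expression.

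First I would introduce the graph space $QG_{0,0,\tilde{\beta}}(\Fl(E)_T)$ of genus-zero stable maps to $\Fl(E)_T$ of degree $\tilde\beta$ whose domain carries one distinguished parameterised $\PP^1$-component. This space carries a natural $\Cstar$-action coming from rotation of the parameterised $\PP^1$ with fixed points $0,\infty$. Givental's characterisation of points on the Lagrangian cone asserts that
\[
\sum_{\tilde\beta} Q^{\tilde\beta}\,(\ev_{\infty})_{*}\Bigl(\tfrac{1}{z-\psi_\infty}\cap [QG_{0,0,\tilde\beta}(\Fl(E)_T)]^{\vir}\Bigr),
\]
with appropriate insertions, defines a family on $\cL_{\Fl(E)_T}$ after taking the non-equivariant limit in the $\Cstar$-equivariant parameter $z$. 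The task is then to evaluate the left-hand side by virtual $\Cstar$-localisation and recognise the result.

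Next I would stratify the $\Cstar$-fixed locus. A fixed stable map either has its parameterised component collapsed (giving no contribution in the limit of interest), or maps the parameterised $\PP^1$ as a degree-$\tilde d$ cover of a $T$-invariant curve in a fibre of $\Fl(E)_T \to X$, with the residual degree $\pi_*\tilde\beta \in \NE(X)$ carried by an ordinary stable map to $X$ attached at $\infty$. Because $\Fl(E)_T$ is a toric fibration, Lemma~\ref{torus_invariant_divisors} identifies the $T$-weights on the normal bundle at a torus-fixed section with the toric divisor classes $\tilde H_{i,j}-\tilde H_{i+1,j'}$ on $\Fl(E)_T$. A direct computation of the equivariant Euler class of the virtual normal bundle to the fixed locus — standard for toric Givental $I$-functions — produces the ratio
\[
\prod_{i,j,j'}\frac{\prod_{m=-\infty}^{0}(\tilde H_{i,j}-\tilde H_{i+1,j'}+mz)}{\prod_{m=-\infty}^{\langle\tilde\beta,\,\tilde H_{i,j}-\tilde H_{i+1,j'}\rangle}(\tilde H_{i,j}-\tilde H_{i+1,j'}+mz)},
\]
while the stable map attached at $\infty$ contributes, after summing over all its degree-$\pi_*\tilde\beta$ configurations and using the Divisor Equation for $X$, the factor $\pi^{*}J_X^{\pi_*\tilde\beta}(\tau,z)$. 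The exponential prefactor $e^{(t+\langle\tilde\beta,t\rangle z)/z}$ is produced by the usual dilaton/divisor shift with $t=\sum t_{i,j}\tilde H_{i,j}$.

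Finally, assembling the fixed-locus contributions gives exactly the formula for $I_{\Fl(E)_T}(t,\tau,z)$, and Givental's graph-space criterion yields $I_{\Fl(E)_T}(t,\tau,-z)\in\cL_{\Fl(E)_T}$. The main obstacle is the bookkeeping for the virtual normal bundle: one must identify the deformation/obstruction sequence of the parameterised component inside the \emph{fibration} (not just the fibre), and verify that contributions coming from horizontal deformations of the attaching node combine cleanly with the pullback of $J_X^{\pi_*\tilde\beta}$ rather than introducing extra base-direction factors. This is ultimately a consequence of the fact that the normal bundle of a $T$-invariant section of $\Fl(E)_T\to X$ is \emph{vertical}, so the $T$-weights on the deformations along the parameterised component only involve the toric classes $\tilde H_{i,j}-\tilde H_{i+1,j'}$ and do not mix with $\pi^{*}H^\bullet(X)$.
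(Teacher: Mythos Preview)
The paper does not prove this statement: Theorem~\ref{brown2014gromov} is quoted from \cite{Brown2014}. The only work done in the paper is the observation in Remark~\ref{novi} that the formula is stated in a slightly different form than Brown's original, together with Lemma~\ref{effsummationrange}, which checks that the summation over $H_2\Fl(E)_T$ is effectively supported on $\NE(\Fl(E)_T)$ and hence that the two formulations agree. So the ``proof'' you should be comparing against is: cite Brown, then verify the reformulation via Lemma~\ref{effsummationrange}.

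Your proposal is therefore doing much more than the paper: you are sketching a proof of Brown's theorem itself. As an outline of that, the spirit is right but the mechanism is not quite Brown's. Brown does not evaluate a graph-space integral directly by $\Cstar$-localisation on the parameterised component. Instead he uses the \emph{fibrewise} torus action on $\Fl(E)_T$ and shows that $I_{\Fl(E)_T}$ satisfies the two conditions that characterise points of $\cL_{\Fl(E)_T}$ after localisation to torus-fixed sections: a recursion relation in the Novikov variables (matching the pole structure at $z = (\tilde H_{i,j}-\tilde H_{i',j'})/k$ forced by broken maps through fibrewise $T$-fixed curves), and a polynomiality condition coming from the double construction. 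The base contribution $\pi^*J_X^{\pi_*\tilde\beta}$ enters because the fibrewise $T$-fixed loci are sections $s_\alpha \colon X \to \Fl(E)_T$, so the restricted theory is the Gromov--Witten theory of $X$. Your description of the fixed loci (``parameterised $\PP^1$ mapping to a $T$-invariant curve in a fibre, with a residual stable map to $X$ at $\infty$'') conflates these two localisations; in particular, your assertion that the normal bundle of a $T$-invariant section is purely vertical is not correct in general --- the horizontal contributions are precisely what get absorbed into the pulled-back $J$-function of $X$, and making this precise is the substance of Brown's argument. If you want to reprove the theorem rather than cite it, you should follow the recursion/polynomiality scheme rather than a direct graph-space evaluation.
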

\begin{remark}\label{novi}
We have chosen to state Theorem~\ref{brown2014gromov} in a different form than in Brown's original paper. The equivalence of the two versions follows from Lemma~\ref{effsummationrange} below. The classes $H_{i,j}$ here were denoted in~\cite{Brown2014} by $P_i$, and the classes $H_{i,j}-H_{i+1,j'}$ here were denoted there by~$U_k$. 
\end{remark}
\begin{lemma}\label{effsummationrange}
Writing $I_{\Fl(E)_T}=\sum_{\tilde{\beta}} I_{\Fl(E)_T}^{\tilde{\beta}} Q^{\tilde{\beta}}$,  any nonzero $I^{\tilde{\beta}}$ must have $\tilde{\beta} \in \NE(\Fl(E)_T)$. 
\end{lemma}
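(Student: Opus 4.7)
The plan is to show that $I_{\Fl(E)_T}^{\tilde{\beta}} = 0$ whenever $\tilde{\beta} \notin \NE(\Fl(E)_T)$. The first case is $\pi_{*}\tilde{\beta} \notin \NE(X)$: here $J_X^{\pi_{*}\tilde{\beta}}$ vanishes by the convention that the $J$-function of $X$ expands only over effective classes, so the entire coefficient of $Q^{\tilde{\beta}}$ is zero and we are done immediately.

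Henceforth assume $\pi_{*}\tilde{\beta} \in \NE(X)$ and set $d_{i,j,j'} := \langle \tilde{\beta}, \tilde{H}_{i,j} - \tilde{H}_{i+1,j'}\rangle$; these are the pairings of $\tilde{\beta}$ with the classes of the torus-invariant divisors $\tilde{U}_{i,j,j'} := \tilde{H}_{i,j} - \tilde{H}_{i+1,j'}$ of the toric fibration $\Fl(E)_T \to X$. When $d_{i,j,j'} < 0$ the corresponding factor in the hypergeometric product telescopes to the finite polynomial $\prod_{m = d_{i,j,j'}+1}^{0}(\tilde{U}_{i,j,j'} + mz)$, whose $m=0$ term is $\tilde{U}_{i,j,j'}$ itself; hence the full hypergeometric product is divisible in $H^{\bullet}(\Fl(E)_T)[z]$ by $\prod_{(i,j,j') \in S}\tilde{U}_{i,j,j'}$, where $S := \{(i,j,j') : d_{i,j,j'} < 0\}$.

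The variety $\Fl(E)_T \to X$ is an iterated tower of split projective bundles, the $(i,j)$-th stage being a $\PP^{r_{i+1}-1}$-bundle whose torus-invariant divisors are precisely $\{\tilde{U}_{i,j,j'}\}_{j'=1}^{r_{i+1}}$; the Stanley--Reisner relation on each such fibre gives $\prod_{j'=1}^{r_{i+1}} \tilde{U}_{i,j,j'} = 0$ in $H^{\bullet}(\Fl(E)_T)$. It therefore suffices to exhibit a pair $(i_0, j_0)$ with $d_{i_0, j_0, j'} < 0$ for every $j' \in \{1, \ldots, r_{i_0+1}\}$: the divisibility factor $\prod_{(i,j,j') \in S}\tilde{U}_{i,j,j'}$ will then contain the vanishing product $\prod_{j'=1}^{r_{i_0+1}}\tilde{U}_{i_0, j_0, j'}$ as a subfactor, forcing $I_{\Fl(E)_T}^{\tilde{\beta}} = 0$.

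The main obstacle is this combinatorial claim about $\NE(\Fl(E)_T)$. The plan is to verify it using the description of the Mori cone inherited from the tower of projective bundles. At each stage one picks a torus-fixed ``extremal'' section, corresponding to the dominating line-bundle summand at that stage, so that the Mori cone is generated by the fibre classes of each stage together with the horizontal lifts of $\NE$ of the preceding stages via these sections. Non-effectivity of $\tilde{\beta}$ combined with effectivity of $\pi_{*}\tilde{\beta}$ then forces some fibre coefficient $c_{i_0, j_0}$ to be strictly negative. For the extremal index $j_{*}$ at stage $(i_0, j_0)$ one has $d_{i_0, j_0, j_{*}} = c_{i_0, j_0} < 0$ by direct computation, while for $j' \neq j_{*}$ the difference $d_{i_0, j_0, j'} - c_{i_0, j_0}$ equals the pairing of $\pi_{*}\tilde{\beta}$ with an anti-nef class of the form $c_1(L_{j'}^{(i_0, j_0)} - L_{j_{*}}^{(i_0, j_0)})$ arising from the split structure of the tower, and is therefore non-positive. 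This yields $d_{i_0, j_0, j'} \leq c_{i_0, j_0} < 0$ for every $j'$, closing the argument.
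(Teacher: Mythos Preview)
Your overall strategy---show that the hypergeometric coefficient is divisible by a Stanley--Reisner monomial $\prod_{j'}\tilde U_{i_0,j_0,j'}=0$---is sound and gives a legitimate alternative to the paper's localization argument. Both routes reduce to the same combinatorial core: the contrapositive of your claim is that if $\pi_*\tilde\beta\in\NE(X)$ and for every pair $(i,j)$ there is \emph{some} $j'$ with $d_{i,j,j'}\geq 0$, then $\tilde\beta\in\NE(\Fl(E)_T)$. The paper proves exactly this statement, but by a different mechanism: a choice of $j'(i,j)$ for each $(i,j)$ singles out a torus-fixed section $s_\alpha$, the cone spanned by the corresponding $\iota^*U_k$ contains the ample cone of $\Fl_T$, and hence the fibre part of $\tilde\beta$ (in the splitting induced by $s_\alpha$) lies in $\NE(\Fl_T)$. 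The paper then concludes via Atiyah--Bott localization rather than Stanley--Reisner relations.

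Where your argument breaks is the proof of the combinatorial claim itself. You assert that at each stage one can pick a ``dominating'' summand $L_{j_*}$ so that $c_1(L_{j'})-c_1(L_{j_*})$ is anti-nef for every $j'$. This is false in general: already for $X=\PP^1\times\PP^1$ with $E=\cO(1,0)\oplus\cO(0,1)$ neither summand dominates the other. Worse, for $i_0<\ell$ the classes $\tilde H_{i_0+1,j'}$ are not pulled back from $X$ at all, so the difference $d_{i_0,j_0,j'}-c_{i_0,j_0}$ is not a pairing of $\pi_*\tilde\beta$ with anything on $X$; it involves the coefficients $c_{i_0+1,j'}$ at the next stage. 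Finally, the coefficients $c_{i,j}$ themselves depend on which section you chose, so you cannot adjust $j_*$ to suit $\tilde\beta$ after the fact without moving the location of the negative coefficient. The repair is to replace your anti-nef step by the secondary-fan argument the paper uses; once that is in place, your Stanley--Reisner conclusion goes through and is arguably tidier than invoking localization.
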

\begin{proof}
 To see this we temporarily adopt the notation of Brown and denote the torus invariant divisors by $U_k$, as in Lemma~\ref{torus_invariant_divisors}. 
Then $I_{\Fl(E)_T}$ takes the form 
\[
	I_{\Fl(E)_T}=\sum_{\substack{\tilde{\beta} \in H_2\Fl(E)_T \colon \\ \pi_*\tilde{\beta} \in \NE(X)}}(\dots)\prod_{k=1}^N \frac{\prod_{m=-\infty}^{0}U_k+mz}{\prod_{m=-\infty}^{\langle \tilde{\beta}, U_k \rangle}U_k+mz}
\]
Let $\alpha \subset \{1, \dots N\}$ be a subset of size $R$ which defines a section of the toric fibration as in Section~\ref{flag}. We have that 
\[
	s_\alpha^*I_{\Fl(E)_T}=(\dots)
	\prod_{k \in \alpha} \frac{\prod_{m=-\infty}^{0}(0) + mz}{\prod_{m=-\infty}^{\langle \tilde{\beta}, U_k \rangle}(0)+ mz}\prod_{k \notin \alpha} \frac{\prod_{m=-\infty}^{0}s_\alpha^*U_k+ mz}{\prod_{m=-\infty}^{\langle \tilde{\beta}, U_k \rangle}s_\alpha^*U_k+ mz}
\]
since $s_\alpha^*(U_k)=0$ if $k \in \alpha$. Therefore, if $\langle \tilde{\beta}, U_k \rangle < 0$ for some $k \in \alpha$, the numerator contains a term $(0)$ and vanishes.
We conclude that any $\tilde{\beta} \in H_2\Fl(E)_T$ which gives a nonzero contribution to $s_\alpha^*I_{\Fl(E)_T}$ must satisfy the conditions 
\[
	\pi_*\tilde{\beta} \in \NE(X), \langle \tilde{\beta}, U_k \rangle \geq 0 \, \forall k \in \alpha.
\]
The section $s_\alpha$ gives a splitting $H_2(\Fl(E)_T)=H_2(X)\oplus H_2(\Fl_T)$, via which we may write $\tilde{\beta}=s_{\alpha_*}D+\iota_*d$ where $\iota$ is the inclusion of a fibre. We have
\[
	\langle \tilde{\beta}, U_k \rangle= \langle  D, s_\alpha^*U_k  \rangle+ \langle d, \iota^*U_k \rangle=\langle d, \iota^*U_k \rangle
	 \geq 0
\]	
for all $k \in \alpha$. However, the cone in the secondary fan spanned by the line bundles $\iota^*U_k$ contains the ample cone of $\Fl_T$ (see Section \ref{flag}), so this implies $d \in \NE(\Fl_T)$. It follows that any $\tilde{\beta}$ which gives a nonzero contribution to $s_\alpha^*I_{\Fl(E)_T}$ is effective. 
We now use the Atiyah-Bott localization formula 
\[
	I_{\Fl(E)_T}=\sum_{\alpha} s_{\alpha_*}\left(\frac{s_\alpha^*I_{\Fl(E)_T}}{e^\alpha}\right), \quad \text{where} \; e^\alpha=\prod_{k \notin \alpha} s_\alpha^*U_k
\]
where $\alpha$ ranges over the torus fixed point sections of the fibration, to conclude that the same is true for $I_{\Fl(E)_T}$.
\end{proof}

\begin{lemma}\label{Brown I satisfies Divisor Equation}
Brown's $I$-function satisfies the Divisor Equation. That is,
$$z\nabla_{\rho}I_{\Fl(E)_T}^{\tilde{\beta}} = (\rho + \langle \rho,\tilde{\beta} \rangle z ) I_{\Fl(E)_T}^{\tilde{\beta}}$$ for any $\rho \in H^2(\Fl(E)_T)$.
\end{lemma}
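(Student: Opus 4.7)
The strategy is to exploit the transparent factorization of Brown's $I$-function and verify the Divisor Equation on a convenient spanning set of $H^2(\Fl(E)_T)$. Reading off the coefficient of $Q^{\tilde\beta}$ in the formula of Theorem~\ref{brown2014gromov} gives
$$I_{\Fl(E)_T}^{\tilde\beta}(t,\tau,z) = e^{t/z}\, e^{\langle \tilde\beta, t\rangle}\, \pi^* J_X^{\pi_*\tilde\beta}(\tau,z)\, H_{\tilde\beta}(z),$$
where $H_{\tilde\beta}(z)$ is the product of hypergeometric factors and depends on neither $t$ nor $\tau$. I would verify the Divisor Equation separately for two families of classes that together span $H^2(\Fl(E)_T) = \pi^* H^2(X) + \mathrm{span}\{\tilde H_{i,j}\}$ (by the Leray--Hirsch theorem applied to the flag tower), and then extend by linearity of both sides in $\rho$.

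First, for $\rho = \tilde H_{i,j}$ the derivative $\nabla_\rho$ is $\partial/\partial t_{i,j}$ and acts only on the pair of exponential factors. A one-line computation yields
$$z\, \partial_{t_{i,j}}\bigl(e^{t/z}\, e^{\langle \tilde\beta, t\rangle}\bigr) = \bigl(\tilde H_{i,j} + \langle \tilde H_{i,j}, \tilde\beta\rangle z\bigr)\, e^{t/z}\, e^{\langle \tilde\beta, t\rangle},$$
which gives the Divisor Equation in this direction immediately. Second, for $\rho = \pi^*\sigma$ with $\sigma \in H^2(X)$, the only $\tau$-dependence is in the factor $\pi^* J_X^{\pi_*\tilde\beta}(\tau, z)$. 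Using the standard Divisor Equation satisfied by the $J$-function of $X$, namely $z\nabla_\sigma J_X^D = (\sigma + \langle \sigma, D\rangle z) J_X^D$ with $D = \pi_*\tilde\beta$, I pull back to $\Fl(E)_T$ and invoke the projection formula $\langle \sigma, \pi_*\tilde\beta\rangle = \langle \pi^*\sigma, \tilde\beta\rangle$ to obtain
$$z\nabla_{\pi^*\sigma} I_{\Fl(E)_T}^{\tilde\beta} = \bigl(\pi^*\sigma + \langle \pi^*\sigma, \tilde\beta\rangle z\bigr) I_{\Fl(E)_T}^{\tilde\beta},$$
as required.

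There is essentially no obstacle here: the Divisor Equation for $J_X$ is standard, and the rest is bookkeeping made easy by the fact that the $t$-dependence of $I_{\Fl(E)_T}^{\tilde\beta}$ is confined to two explicit exponential factors while the $\tau$-dependence is confined to a single pullback of $J_X$. The only point requiring a moment's care is the identification, via linearity, of the parameter directions $(t, \tau)$ with the cohomology classes in $H^2(\Fl(E)_T)$ along which the Divisor Equation is formulated.
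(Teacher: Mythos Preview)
Your proposal is correct and follows essentially the same approach as the paper: decompose $\rho$ into a fibre part (spanned by the $\tilde H_{i,j}$) and a base part $\pi^*\rho_B$, handle the former by differentiating the exponential prefactors and the latter by invoking the Divisor Equation for $J_X$, then combine by linearity. The paper's proof is simply a more compressed version of the same computation.
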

\begin{proof}
Decompose $\rho=\rho_F + \pi^*\rho_B$ into fibre and base part.
	Basic differentiation and the divisor equation for $J_X$ show that
	$$ z\nabla_{\rho}I_{\Fl(E)_T}^{\tilde{\beta}} = \, \left(\rho_F + \langle \rho_F, \tilde{\beta} \rangle z+(\pi^*\rho_{B} + \langle \pi^*\rho_B, \tilde{\beta}\rangle z)\right) e^{t/z} e^{\langle \tilde{\beta}, t \rangle} \pi^{*}J_{X}^{\pi_{*}\tilde{\beta}}(\tau,z) \cdot \mathbf{H} $$	
	where $\mathbf{H}$ is a hypergeometric factor with no dependence on $t$ or $\tau$. The right-hand simplifies to 
	$$(\rho+ \langle \rho, \tilde{\beta} \rangle z)I_{\Fl(E)_T}^{\tilde{\beta}}$$ 
as required.
\end{proof}
\begin{lemma}\label{weylinvariant}
	If we restrict $t$ to lie in the Weyl-invariant locus $H^2(\Fl(E)_T)^W \subset H^2(\Fl(E)_T)$ then $(t,\tau) \mapsto I_{\Fl(E)_T}(t,\tau,z)$ takes values in $H^\bullet(\Fl(E)_T)^W$.
\end{lemma}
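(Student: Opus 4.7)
The plan is to show that for each $w \in W$, we have $w \cdot I_{\Fl(E)_T}(t,\tau,z) = I_{\Fl(E)_T}(t,\tau,z)$, where $w$ acts on $\cH_{\Fl(E)_T}$ through its natural action on cohomology classes (permuting the $\tilde{H}_{i,j}$ for fixed $i \leq \ell$, and acting trivially on $\tilde{H}_{\ell+1, j'} = -\pi^* c_1(L_{j'})$) together with the action $Q^{\tilde{\beta}} \mapsto Q^{w \tilde{\beta}}$ on the Novikov ring, where $W$ acts on $H_2(\Fl(E)_T)$ dually. Writing the $I$-function as a sum $I_{\Fl(E)_T}(t,\tau,z) = \sum_{\tilde\beta} A_{\tilde\beta}(t,\tau,z)$ of the obvious $\tilde\beta$-summands, the key claim is the term-by-term identity $w(A_{\tilde\beta}) = A_{w\tilde\beta}$; Weyl invariance then follows by reindexing $\tilde\beta \mapsto w\tilde\beta$ in the sum.

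Establishing $w(A_{\tilde\beta}) = A_{w\tilde\beta}$ is a matter of four straightforward checks. First, $e^{t/z}$ is $W$-invariant since $t$ is by assumption. Second, the factor $\pi^* J_X^{\pi_*\tilde\beta}(\tau,z)$ lies in the image of $\pi^*$ and is therefore $W$-invariant as a cohomology class; moreover $\pi_*(w\tilde\beta) = \pi_*\tilde\beta$, since $\langle \pi_*(w\tilde\beta), \alpha\rangle = \langle w\tilde\beta, \pi^*\alpha\rangle = \langle \tilde\beta, w^{-1}\pi^*\alpha\rangle = \langle \tilde\beta, \pi^*\alpha\rangle$ for every $\alpha \in H^2(X)$. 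Third, $\langle w\tilde\beta, t\rangle = \langle w\tilde\beta, wt\rangle = \langle \tilde\beta, t\rangle$ by Weyl-invariance of $t$, so the scalar factor $e^{\langle \tilde\beta, t\rangle}$ matches after relabeling. Fourth, for the hypergeometric factor $H_{\tilde\beta}(z)$, reindex the products $\prod_{i,j,j'}$ by $j \mapsto \sigma_i^{-1}(j)$ and $j' \mapsto \sigma_{i+1}^{-1}(j')$ (with the convention $\sigma_{\ell+1} = \id$, since $W$ acts trivially on $\tilde{H}_{\ell+1, j'}$); combined with the identity $\langle \tilde\beta, \tilde{H}_{i,\sigma_i^{-1}(j)} - \tilde{H}_{i+1,\sigma_{i+1}^{-1}(j')}\rangle = \langle w\tilde\beta, \tilde{H}_{i,j} - \tilde{H}_{i+1, j'}\rangle$, this gives $w(H_{\tilde\beta}) = H_{w\tilde\beta}$.

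Combining the four observations with $w(Q^{\tilde\beta}) = Q^{w\tilde\beta}$ yields $w(A_{\tilde\beta}) = A_{w\tilde\beta}$, and reindexing the sum completes the proof. No single step is really the main obstacle; the bookkeeping to watch is the duality between the $W$-action on $H^2$ and on $H_2$, and in particular the compatibility of $\pi_*$ with the $W$-action, which comes down to the observation that $\pi^*$ lands in the Weyl-invariant subring of cohomology.
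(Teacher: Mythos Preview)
Your proof is correct and follows the same approach as the paper's, which consists of the single sentence ``This is immediate from the definition of $I_{\Fl(E)_T}(t,\tau,z)$, in Theorem~\ref{brown2014gromov}.'' You have simply unpacked what ``immediate'' means: the explicit term-by-term check $w(A_{\tilde\beta}) = A_{w\tilde\beta}$ together with the reindexing of the sum, which is exactly the content the paper leaves to the reader.
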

\begin{proof}
	This is immediate from the definition of $I_{\Fl(E)_T}(t,\tau,z)$, in Theorem~\ref{brown2014gromov}.
\end{proof}

\begin{proposition}\label{brownoh}
	Restrict $t$ to lie in the Weyl-invariant locus $H^2(\Fl(E)_T)^W \subset H^2(\Fl(E)_T)$ and consider the Brown $I$-function $(t, \tau) \mapsto I_{\Fl(E)_T}(t, \tau, z)$. The Givental--Martin modification $I_{\GM}(t, \tau)$ of this family is equal to Oh's $I$-function $I_{\Fl(E)}(t, \tau)$.
	\end{proposition}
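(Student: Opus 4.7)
The plan is a direct comparison: one unwinds the definition of the Givental--Martin modification applied to Brown's $I$-function and matches the result, term by term, with Oh's $I$-function.

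First, form the twisted $I$-function $I^\tw_{\Fl(E)_T}$ by multiplying the coefficient of $Q^{\tilde{\beta}}$ in $I_{\Fl(E)_T}$ by the Weyl modification factor $W_{\tilde{\beta}}(z)$ from \eqref{modg}. The roots of $G=\prod_i \GL_{r_i}$ are exactly the characters $\tilde{H}_{i,j}-\tilde{H}_{i,j'}$ for $1\le i\le \ell$ and $j\ne j'$, so writing $d_{i,j}=\langle \tilde{\beta},\tilde{H}_{i,j}\rangle$ the Weyl factor unpacks as
\[
W_{\tilde{\beta}}(z)=\prod_{i=1}^{\ell}\prod_{j\ne j'}\frac{\prod_{m=-\infty}^{d_{i,j}-d_{i,j'}}(\tilde{H}_{i,j}-\tilde{H}_{i,j'}+\lambda+mz)}{\prod_{m=-\infty}^{0}(\tilde{H}_{i,j}-\tilde{H}_{i,j'}+\lambda+mz)}.
\]

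Next, reorganize Brown's sum via $\varrho$. For each $\beta\in\NE(\Fl(E))$, the lifts $\tilde{\beta}\in H_2(\Fl(E)_T)$ are parametrized by tuples $\underline{d}=(d_{i,j})\in\ZZ^R$ subject only to $\sum_j d_{i,j}=\langle \beta, c_1(S_i^\vee)\rangle$ for each $1\le i\le \ell$; by Lemma~\ref{effsummationrange} we need not impose that $\tilde{\beta}$ be fibre-effective, as non-effective lifts contribute zero to Brown's formula. The remaining prefactors align exactly with Oh's: $e^{t/z}$ satisfies $p(t)=\sum_i t_i c_1(S_i^\vee)$, the exponential $e^{\langle \tilde{\beta},t\rangle}$ specializes to $e^{\langle \beta,t\rangle}$ by Weyl-invariance of $t$ combined with the constraint above, and $\pi^*J_X^{\pi_*\tilde{\beta}}=\pi^*J_X^{\pi_*\beta}$ because $\pi_*\tilde{\beta}$ depends only on the pairings of $\tilde{\beta}$ with $\pi^*H^2(X)\subset H^2(\Fl(E)_T)^W$.

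Now project via $p$ and let $\lambda\to 0$. For each $\beta$, the Weyl group $W=\prod_i S_{r_i}$ acts on the inner sum $\sum_{\underline{d}}$ by permuting the $\tilde{H}_{i,j}$ and the $d_{i,j}$ simultaneously, so the sum is symmetric in $\tilde{H}_{i,1},\dots,\tilde{H}_{i,r_i}$ for each $i$. By the discussion following Lemma~\ref{torus_invariant_divisors}, such Weyl-invariant expressions descend along $p$ to the corresponding symmetric expressions in the Chern roots $H_{i,j}$ of $S_i^\vee$; concretely, this is effected by the formal substitution $\tilde{H}_{i,j}\mapsto H_{i,j}$ inside each summand, followed by recognition of the resulting symmetric polynomial as a class on $\Fl(E)$. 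Each individual hypergeometric factor is regular at $\lambda=0$, so the limit may be taken under the sum, and Lemma~\ref{IGMexists} guarantees that the resulting cohomology class is well-defined.

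The key matching step is then a direct identification of hypergeometric products. After the substitution $\tilde{H}_{i,j}\mapsto H_{i,j}$ and $\lambda\to 0$, the product of Brown's hypergeometric factor and $W_{\tilde{\beta}}(z)$ becomes
\[
\prod_{i=1}^{\ell}\prod_{j=1}^{r_i}\prod_{j'=1}^{r_{i+1}}\frac{\prod_{m=-\infty}^{0}(H_{i,j}-H_{i+1,j'}+mz)}{\prod_{m=-\infty}^{d_{i,j}-d_{i+1,j'}}(H_{i,j}-H_{i+1,j'}+mz)}\cdot\prod_{i=1}^{\ell}\prod_{j\ne j'}\frac{\prod_{m=-\infty}^{d_{i,j}-d_{i,j'}}(H_{i,j}-H_{i,j'}+mz)}{\prod_{m=-\infty}^{0}(H_{i,j}-H_{i,j'}+mz)},
\]
which is precisely the hypergeometric factor in Oh's $I$-function (Theorem~\ref{ohI}). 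The main -- but ultimately bookkeeping -- obstacle is tracking indices carefully through the map $\varrho$ and verifying that the effectiveness-free summation range in Oh's formula matches Brown's once non-effective lifts are noted to vanish by Lemma~\ref{effsummationrange}; everything else is a formal comparison of the displayed products.
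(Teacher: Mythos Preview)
Your proposal is correct and follows essentially the same approach as the paper's proof: both arguments write out the Weyl modification factor explicitly using the roots $\rho_{i,j}\rho_{i,j'}^{-1}$ of $G$, reorganise the sum over $\NE(\Fl(E)_T)$ as a double sum over $\beta\in\NE(\Fl(E))$ and lifts $\underline{d}\in\ZZ^R$ (using Lemma~\ref{effsummationrange} to avoid imposing fibre-effectiveness), restrict $t$ to the Weyl-invariant locus, project via $p$ by replacing $\tilde{H}_{i,j}$ with $H_{i,j}$, and take $\lambda\to 0$ to match Oh's hypergeometric factor. The paper is slightly more explicit about the Novikov-ring coordinates $q_{i,j}\mapsto q_i$ realising $\varrho$, whereas you describe the same map in terms of the constraints $\sum_j d_{i,j}=\langle\beta,c_1(S_i^\vee)\rangle$, but these are the same computation.
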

	\begin{proof} 
		Lemma~\ref{weylinvariant} and Lemma~\ref{IGMexists} imply that the Givental--Martin modification $I_{\GM}(t, \tau)$ exists. We need to compute it. Note that the restrictions to the fibre of the classes $\tilde{H}_{i,j}$ form a basis for~$H^{2}(\Fl_T)$. Since the general fibre $\Fl_T$ of $\Fl(E)_T$ has vanishing first homology, the Leray--Hirsch theorem gives an identification $\QQ[H_2(\Fl(E)_T, \ZZ)]=\QQ[H_2(X,\ZZ)][q_{1,1}, \dots, q_{\ell, r_\ell}]$ via the map 
		\begin{equation}\label{noviab}
		Q^{\tilde{\beta}} \mapsto Q^{\pi_*{\tilde{\beta}}}\prod_{i,j}q_{i,j}^{\langle \tilde{H}_{i,j}, \tilde{\beta} \rangle}
		\end{equation}
		By Lemma~\ref{effsummationrange}, the summation range in the sum defining $I_{\Fl(E)_T}$ is contained in $\NE(\Fl(E)_T)$. We can therefore write the corresponding twisted $I$-function \eqref{general Weyl twist} as
		\begin{align*}
			I^{\tw}(t, \tau,z)= e^{\frac{t}{z}} \sum_{\substack{D \in \NE(X) \\ \underline{d} \in \ZZ^R}} Q^{D} \prod_{i,j}q_{i,j}^{d_{i,j}}e^{t \cdot \underline{d}} \pi^{*}J_{X}^{D}(\tau, z)
			\prod_{i=1}^{\ell} \prod_{j=1}^{r_i}\prod_{j'=1}^{r_{i+1}}\frac{  \prod_{m=-\infty}^{0}\tilde{H}_{i,j} - \tilde{H}_{i+1,j'} + mz}
			{\prod_{m=-\infty}^{d_{i,j}  - d_{i+1,j'}}\tilde{H}_{i,j} - \tilde{H}_{i+1,j'} + mz} \\
			\times \prod_{i=1}^{\ell} \prod_{j \neq j'} \frac{\prod_{m=-\infty}^{d_{i,j} - d_{i,j'}}\tilde{H}_{i,j} - \tilde{H}_{i,j'} + \lambda + mz}{\prod_{m=-\infty}^{0} \tilde{H}_{i,j} -\tilde{H}_{i,j'} + \lambda + mz}
		\end{align*}
		where the $t_{i,j} \in \CC$, $t = \sum_{i=1}^{\ell}\sum_{j=1}^{r_i} t_{i,j} \tilde{H}_{i,j}$, and $t \cdot \underline{d}=\sum_{i,j}t_{i,j}d_{i,j}$. For the Weyl modification factor we used the fact that the roots of $G$ are given by $\rho_{i,j} \rho_{i,j'}^{-1}$, where the character $\rho_{i,j}$ was defined in section \ref{flag}. By Lemma~\ref{effsummationrange} the effective summation range for the vector $\underline{d}$ here is contained in the set $S \subset \ZZ^R$ consisting of $\underline{d}$ such that $\langle \tilde{\beta}, \tilde{H}_{i,j} \rangle=d_{i,j}$ for some $\tilde{\beta} \in \NE(\Fl(E)_T)$. 

		We can identify the group ring $\QQ[H_2(\Fl(E))]$ with $\QQ[H_2(X,\ZZ)][q_{1}, \dots, q_{\ell}]$ via the map
		\begin{equation} \label{novinonab}
			Q^\beta \mapsto Q^{\pi_*\beta}\prod_{i}q_{i}^{\langle c_1(S_i^\vee), \beta \rangle}
		\end{equation}
		Via \eqref{noviab} and \eqref{novinonab} the map on Mori cones $\varrho: \NE(\Fl(E)_T) \rightarrow \NE(\Fl(E))$ becomes
		$$Q^D\prod_{i,j}q_{i,j}^{d_{i,j}} \mapsto Q^D\prod_{i}q_{i}^{\sum_j d_{i,j}}$$
		Restricting $t$ to the Weyl-invariant locus $H^2(\Fl(E)_T)^W$ corresponds to setting $t_{i,j}=t_i$ for all~$i$ and~$j$, which gives $e^{t \cdot \underline{d}}=e^{\sum_i t_i d_i}$ where $d_i=\sum_j d_{i,j}$. The identification $H^2(\Fl(E)_T)^W \cong H^2(\Fl(E))$ sends $\sum_{i,j} t_i \tilde{H}_{i,j}$ to $\sum_i t_i c_1(S_i^\vee)$, so projecting along \eqref{quotientH} and taking the limit as $\lambda = 0$ we obtain
		\begin{align*}
			e^{\frac{t}{z}} \sum_{\substack{D \in \NE(X)\\\underline{\delta} \in \ZZ^\ell}} Q^D\prod_{i}q_{i}^{\delta_i} e^{t \cdot \underline{\delta}} \pi^{*}J_{X}^D(\tau, z) \sum_{\substack{\underline{d} \in \ZZ^R\colon \\ 
			\forall i \sum_j d_{i,j}=\delta_i}}\prod_{i=1}^{\ell} \prod_{j=1}^{r_i}\prod_{j'=1}^{r_{i+1}}\frac{  \prod_{m=-\infty}^{0}{H}_{i,j} - {H}_{i+1,j'} + mz}{\prod_{m=-\infty}^{d_{i,j} - d_{i+1,j'}}{H}_{i,j} - {H}_{i+1,j'} + mz} \\
			\times \prod_{i=1}^{\ell} \prod_{j \neq j'} \frac{\prod_{m=-\infty}^{d_{i,j} - d_{i,j'}}H_{i,j} - H_{i,j'}  + mz}{\prod_{m=-\infty}^{0} H_{i,j} -H_{i,j'} + mz}
		\end{align*}
	where now $t=\sum_i t_i c_1(S_i^\vee)$. The effective summation range here is contained in $\NE(\Fl(E))$ by construction.
	Using \eqref{novinonab} again we may rewrite this as
	\begin{align*}
		e^{\frac{t}{z}} \sum_{\substack{\beta \in \NE(\Fl(E))}} Q^{\beta} e^{\langle \beta, t \rangle} \pi^{*}J_{X}^{\pi_*\beta}(\tau, z) \sum_{\substack{\underline{d} \in \ZZ^R\colon \\ \forall i \sum_j d_{i,j}=\langle \beta, c_1(S_i^\vee) \rangle}}\prod_{i=1}^{\ell} \prod_{j=1}^{r_i}\prod_{j'=1}^{r_{i+1}}\frac{  \prod_{m=-\infty}^{0}{H}_{i,j} - {H}_{i+1,j'} + mz}{\prod_{m=-\infty}^{d_{i,j} - d_{i+1,j'}}{H}_{i,j} - {H}_{i+1,j'} + mz} \\
		\times \prod_{i=1}^{\ell} \prod_{j \neq j'} \frac{\prod_{m=-\infty}^{d_{i,j} - d_{i,j'}}H_{i,j} - H_{i,j'}  + mz}{\prod_{m=-\infty}^{0} H_{i,j} -H_{i,j'} + mz}
	\end{align*}
	This is $I_{\Fl(E)}(t, \tau, z)$, as required. 
	\end{proof}

\begin{remark}\label{effectivesummationrange}
In view of \eqref{ampleconegen}, we see that the effective summation range in $I_{\Fl(E)}$ is contained in the subset of vectors satisfying
\[
	d_{i,j} \geq \min_{j'}d_{\ell+1,j'} \; \forall \, i, j
\]	
This will prove useful in calculations in Section \ref{examples}.
\end{remark}

\subsection{The Abelian/non-Abelian Correspondence with bundles}\label{AnAwithbundles}

We are now ready to prove Theorem~\ref{step two}. Recall from the Introduction that we have fixed a representation $\rho\colon  G \rightarrow \GL(V)$ where $G=\prod_i \GL_{r_i}(\CC)$, and that this determines vector bundles $V^G \to \Fl(E)$ and $V^T \to \Fl(E)_T$.  Since $T$ is Abelian, $V^T$ splits as a direct sum of line bundles $$V^T=F_1 \oplus \dots \oplus F_k$$
The Brown $I$-function gives a family 
\begin{align*}
	&(t,\tau) \mapsto I_{\Fl(E)_T}(t,\tau,{-z}) & \text{$t \in H^2(\Fl(E)_T)^W$, $\tau \in H^\bullet(X)$} 
	\intertext{of elements of $\cH_{\Fl(E)_T}$, and Theorem~\ref{brown2014gromov} shows that $I_{\Fl(E)_T}(t,\tau,{-z}) \in \cL_{\Fl(E)_T}$. Twisting by $(F, \mathbf{c})$ where $\mathbf{c}$ is the $\CC^\times$-equivariant Euler class with parameter $\mu$ gives a twisted $I$-function, as in Definition~\ref{twistedI}, which we denote by} 
	&(t, \tau) \mapsto I_{V^T_\mu}(t, \tau,{-z}) & \text{$t \in H^2(\Fl(E)_T)^W$, $\tau \in H^\bullet(X)$}
	\intertext{Applying Proposition~\ref{twisted=Deltad} shows that $I_{V^T_\mu}(t, \tau, {-z}) \in \cL_{V^T_\mu}$.  Twisting again, by $(\Phi, \mathbf{c'})$ where $\Phi \to \Fl(E)_T$ is the roots bundle from the Introduction and $\mathbf{c'}$ is the $\CC^\times$-equivariant Euler class with parameter $\lambda$ gives a twisted $I$-function, as in Definition~\ref{twistedI}, which we denote by} 
	& (t, \tau) \mapsto I_{\Phi_\lambda \oplus V^T_\mu}(t, \tau,{-z}) & \text{$t \in H^2(\Fl(E)_T)^W$, $\tau \in H^\bullet(X)$}
	\intertext{Applying Proposition~\ref{twisted=Deltad} again shows that $I_{\Phi_\lambda \oplus V^T_\mu}(t, \tau, {-z}) \in \cL_{\Phi_\lambda \oplus V^T_\mu}$. We now project along \eqref{quotientH} and take the non-equivariant limit $\lambda \to 0$, obtaining the Givental--Martin modification of~$I_{V^T_\mu}$. This is a family}
	& (t, \tau) \mapsto I_{\GM}(t, \tau, {-z}) & \text{$t \in H^2(\Fl(E)_T)^W$, $\tau \in H^\bullet(X)$}
\end{align*}
of elements of $\cH_{\Fl(E)}$. Explicitly:
\begin{definition}[which is a specialisation of Definition~\ref{IGM definition} to the situation at hand] \label{IGM definition special case} 
	\begin{align*}
		& I_{\GM}(t, \tau, z) = \\
		& e^{\frac{t}{z}} \sum_{\substack{\beta \in \NE(\Fl(E))}} Q^{\beta} e^{\langle \beta, t \rangle} \pi^{*}J_{X}^{\pi_*\beta}(\tau, z) 
		\sum_{\substack{\underline{d} \in \ZZ^R\colon \\ \forall i \sum_j d_{i,j}=\langle \beta, c_1(S_i^\vee) \rangle}}\prod_{i=1}^{\ell} \prod_{j=1}^{r_i}\prod_{j'=1}^{r_{i+1}}\frac{  \prod_{m=-\infty}^{0}{H}_{i,j} - {H}_{i+1,j'} + mz}{\prod_{m=-\infty}^{d_{i,j} - d_{i+1,j'}}{H}_{i,j} - {H}_{i+1,j'} + mz} \\
		& \qquad \qquad \qquad \qquad \qquad \qquad \qquad 
		\times \prod_{i=1}^{\ell} \prod_{j \neq j'} \frac{\prod_{m=-\infty}^{d_{i,j} - d_{i,j'}}H_{i,j} - H_{i,j'}  + mz}{\prod_{m=-\infty}^{0} H_{i,j} -H_{i,j'} + mz} 
		\prod_{s=1}^{k} \frac{\prod_{m=-\infty}^{f_s \cdot \underline{d}} f_s + \mu + mz}{\prod_{m=-\infty}^{0} f_s + \mu + mz}
	\end{align*}
	Here $J^D_X(\tau, z)$ is as in \eqref{JX by degrees}, $f_s \cdot \underline{d} = \sum_{i,j} f_{s,i,j} d_{i,j}$, and $f_s = \sum_{i,j} f_{s,i,j} H_{i,j}$, where $$c_1(F_s) = \sum_{i=1}^\ell \sum_{j=1}^{r_i}f_{s,i,j} \tilde{H}_{i,j}$$
\end{definition}

\noindent Lemma~\ref{IGMexists} shows that this expression is well-defined despite the presence of 
$$\omega = \textstyle \prod_i \prod_{j < j'} (H_{i,j} - H_{i,j'})$$ 
in the denominator. Corollary~\ref{IGMonLGM} shows that $I_{\GM}(t, \tau, {-z}) \in \cL_{\GM, V^T_\mu}$.  Note that $I_{\GM}(t, \tau)$ is \emph{not} the $V^G$-twist of Oh's $I$-function $I_{\Fl(E)}$. Indeed $V^G$ need not be a split bundle, so the twist may not even be defined. 

\begin{theorem} \label{IGM on twisted cone}
	Let $I_{\GM}$ be as in Definition~\ref{IGM definition special case}. Then:
	\begin{align*}
		I_{\GM}(t,\tau,-z) \in \cL_{V^G_\mu} &&
		\text{for all $t \in H^2(\Fl(E)_T)^W$, $\tau \in H^\bullet(X)$.}
	\end{align*}
\end{theorem}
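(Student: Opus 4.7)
The plan is to reduce the statement to the Quantum Riemann--Roch Theorem~\ref{QRR} applied on $\Fl(E)$. Concretely, I aim to identify
\begin{equation*}
    I_{\GM}(t,\tau,z) = \Delta_{V^G_\mu}\bigl(D_{V^G_\mu}(I_{\Fl(E)}(t,\tau,z))\bigr),
\end{equation*}
where both operators are defined on $\cH_{\Fl(E)}$ via the splitting principle if $V^G$ does not split. Once this identification is made, Theorem~\ref{QRR} together with an extension of Lemma~\ref{(3)} to non-split bundles yields $I_{\GM} \in \cL_{V^G_\mu}$.

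First I would verify that Oh's $I$-function $I_{\Fl(E)}$ satisfies the Divisor Equation. The argument parallels Lemma~\ref{Brown I satisfies Divisor Equation}: the hypergeometric factors in $I_{\Fl(E)}^\beta$ depend only on $\beta$, not on $t$ or $\tau$, so decomposing $\rho = \rho_F + \pi^*\rho_B$ and differentiating the prefactors $e^{t/z} e^{\langle \beta, t\rangle}$ combined with the divisor equation for $J_X$ applied to $\pi^* J_X^{\pi_*\beta}$ produce the required factor $\rho + \langle \rho, \beta\rangle z$. I would then extend Lemma~\ref{(3)} and Proposition~\ref{twisted=Deltad} to non-split bundles via the splitting principle: the operators $\Delta_{F_\lambda}$ and $D_{F_\lambda}$ appear only through symmetric functions of the Chern roots, so they descend unambiguously from an auxiliary splitting bundle of $F$ to $\cH_Y$, and the proofs carry over verbatim.

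The key remaining step is the identification $I_{\GM} = \Delta_{V^G_\mu}\bigl(D_{V^G_\mu}(I_{\Fl(E)})\bigr)$. My approach starts from Proposition~\ref{twisted=Deltad} on the Abelian side: $I_{V^T_\mu} = \Delta_{V^T_\mu}\bigl(D_{V^T_\mu}(I_{\Fl(E)_T})\bigr)$. Applying the Givental--Martin modification to both sides, the core $I_{\Fl(E)_T}$ becomes Oh's $I$-function $I_{\Fl(E)}$ by Proposition~\ref{brownoh}, while the operators $\Delta_{V^T_\mu}$ and $D_{V^T_\mu}$ descend, under the Martin projection \eqref{quotientH} combined with the non-equivariant limit $\lambda \to 0$, to operators on $\cH_{\Fl(E)}$ that coincide with $\Delta_{V^G_\mu}$ and $D_{V^G_\mu}$ as defined via the splitting principle. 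The essential reason is that symmetric functions of the $c_1(F_s)$ descend to Chern classes of $V^G$ by \eqref{(21)}, which in turn can be expressed as symmetric functions of the formal Chern roots of $V^G$; thus the two constructions agree.

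The main obstacle will be making this descent rigorous, in particular verifying the compatibility of the Martin projection, the non-equivariant limit in $\lambda$, and the symmetric-function interpretation of Chern roots on both sides. The paper's remark that $I_{\GM}$ is \emph{not} the naive $V^G$-twist of Oh's $I$-function is consistent with this approach: the naive formula of Definition~\ref{twistedI} requires $V^G$ to split, whereas the Quantum Riemann--Roch form $\Delta_{V^G_\mu} D_{V^G_\mu}$ is well-defined via the splitting principle and should match $I_{\GM}$ precisely.
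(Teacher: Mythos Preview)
Your proposal is correct and follows essentially the same strategy as the paper: both establish the identity $I_{\GM} = \Delta_{V^G_\mu}\bigl(D_{V^G_\mu}(I_{\Fl(E)})\bigr)$ by working on the Abelian side and projecting along $p$, then conclude via Quantum Riemann--Roch (Corollary~\ref{Ithm}). Two minor simplifications are available. First, Lemma~\ref{(3)} and Corollary~\ref{Ithm} are already stated for arbitrary (not necessarily split) bundles via Chern roots, so no splitting-principle extension is needed for the final step. Second, the paper reverses your order of twisting: it writes $I_{\Phi_\lambda \oplus V^T_\mu} = \Delta_{V^T_\mu}\bigl(D_{V^T_\mu}(I_{\Phi_\lambda})\bigr)$ on the Abelian side (using Proposition~\ref{twisted=Deltad}, which only needs the Divisor Equation for $I_{\Phi_\lambda}$, inherited trivially from Brown's $I$-function), then projects and takes $\lambda \to 0$. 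This ordering makes the descent of the operators transparent and bypasses the separate verification of the Divisor Equation for Oh's $I$-function.
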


\begin{proof}
	Before projecting and taking the non-equivariant limit, we have
	$$
	I_{\Phi_\lambda \oplus V^T_\mu} = \Delta_{V^T_\mu} \big( D_{V^T_\mu} \big( I_{\Phi_\lambda}\big)\big)
	$$
	by Proposition~\ref{twisted=Deltadeq}. Projecting along \eqref{quotientH} gives 
	$$
	p \circ I_{\Phi_\lambda \oplus V^T_\mu} = \Delta_{V^G_\mu} \big( D_{V^G_\mu} \big( p \circ I_{\Phi_\lambda}\big)\big)
	$$
	and taking the limit $\lambda \to 0$, which is well-defined by Lemma~\ref{IGMexists}, gives
	$$
	I_{\GM} = \Delta_{V^G_\mu} \big( D_{V^G_\mu} \big( I_{\Fl(E)}\big)\big)
	$$
	by Proposition~\ref{brownoh}. The result now follows from Proposition~\ref{twisted=Deltad}.
\end{proof}

Exactly the same argument proves:

\begin{corollary} \label{extra line bundle}
	Let $L \to X$ be a line bundle with first Chern class $\rho$, and define the vector bundle $F \to \Fl(E)$ to be $F = V^G \otimes \pi^* L$. Let $I_{\GM}$ be as in Definition~\ref{IGM definition special case}, except that the factor
	\begin{align*}
		\prod_{s=1}^{k} \frac{\prod_{m=-\infty}^{f_s \cdot \underline{d}} f_s + \mu + mz}{\prod_{m=-\infty}^{0} f_s + \mu + mz}
		&& \text{is replaced by} &&
		\prod_{s=1}^{k} \frac{\prod_{m=-\infty}^{f_s \cdot \underline{d} + \langle \rho, \pi_* \beta \rangle} f_s + \pi^* \rho + \mu + mz}{\prod_{m=-\infty}^{0} f_s + \pi^* \rho + \mu + mz}
	\end{align*}
	Then:
	\begin{align*}
		I_{\GM}(t,\tau,-z) \in \cL_{F_\mu} &&
		\text{for all $t \in H^2(\Fl(E)_T)^W$, $\tau \in H^\bullet(X)$.}
	\end{align*}
\end{corollary}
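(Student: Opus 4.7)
The plan is to imitate the proof of Theorem~\ref{IGM on twisted cone} essentially verbatim, with $V^T$ replaced throughout by the bundle $V^T \otimes \pi^* L \to \Fl(E)_T$. Since $V^T = F_1 \oplus \cdots \oplus F_k$ is a direct sum of line bundles and $\pi^* L$ is a line bundle, the bundle $V^T \otimes \pi^* L = (F_1 \otimes \pi^* L) \oplus \cdots \oplus (F_k \otimes \pi^* L)$ remains a direct sum of line bundles whose Chern roots are $f_s + \pi^* \rho$. This is precisely the setting in which Definition~\ref{twistedI} and Proposition~\ref{twisted=Deltad} apply.

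First I would form the twisted $I$-function $I_{(V^T \otimes \pi^* L)_\mu}$ obtained by modifying the Brown $I$-function by the factor
\begin{equation*}
	\prod_{s=1}^k \frac{\prod_{m=-\infty}^{\langle f_s + \pi^* \rho,\, \tilde{\beta}\rangle} f_s + \pi^* \rho + \mu + mz}{\prod_{m=-\infty}^{0} f_s + \pi^* \rho + \mu + mz},
\end{equation*}
and observe via Proposition~\ref{twisted=Deltad} that $I_{(V^T \otimes \pi^* L)_\mu}(t, \tau, -z) \in \cL_{(V^T \otimes \pi^* L)_\mu}$. Twisting once more by the roots bundle with parameter $\lambda$ yields $I_{\Phi_\lambda \oplus (V^T \otimes \pi^* L)_\mu}(t, \tau, -z) \in \cL_{\Phi_\lambda \oplus (V^T \otimes \pi^* L)_\mu}$, again by Proposition~\ref{twisted=Deltad}.

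Next I would project along \eqref{quotientH} and take the non-equivariant limit $\lambda \to 0$. The identity $j^*(V^T \otimes \pi^* L) = q^*(V^G \otimes \pi^* L) = q^* F$ from \eqref{(21)} (combined with compatibility of $\pi$ with $j$ and $q$) ensures that $V^T \otimes \pi^* L$ projects to $F$ under Martin's map, so that the Quantum Riemann--Roch identity from the proof of Theorem~\ref{IGM on twisted cone} becomes
\begin{equation*}
	p \circ I_{\Phi_\lambda \oplus (V^T \otimes \pi^* L)_\mu} = \Delta_{F_\mu}\bigl( D_{F_\mu}\bigl( p \circ I_{\Phi_\lambda}\bigr)\bigr).
\end{equation*}
Well-definedness of the limit is guaranteed by Lemma~\ref{IGMexists} applied to the Weyl-invariant family $p \circ I_{\Phi_\lambda}$ (whose non-equivariant limit is $I_{\Fl(E)}$ by Proposition~\ref{brownoh}). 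Thus we obtain $I_{\GM} = \Delta_{F_\mu}\bigl( D_{F_\mu}\bigl(I_{\Fl(E)}\bigr)\bigr)$, and a final application of Proposition~\ref{twisted=Deltad} places $I_{\GM}(t, \tau, -z)$ on $\cL_{F_\mu}$.

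The only real bookkeeping step is verifying that the modification factor above, after the identification of Novikov rings via \eqref{noviab} and \eqref{novinonab}, specialises to the factor given in the statement. This follows because $\langle f_s + \pi^* \rho,\, \tilde{\beta}\rangle = f_s \cdot \underline{d} + \langle \rho, \pi_* \tilde{\beta}\rangle$ by the projection formula, so the exponent agrees after projection to $\beta \in \NE(\Fl(E))$. There is no genuine obstacle here: the argument is structurally identical to Theorem~\ref{IGM on twisted cone}, and the mild additional point is just the interchangeability of tensoring by $\pi^* L$ with the Abelian/non-Abelian projection, which is automatic because $\pi^* L$ is pulled back from the common base $X$.
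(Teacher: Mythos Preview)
Your proposal is correct and matches the paper's approach exactly: the paper's proof of this corollary is literally the single line ``Exactly the same argument proves,'' referring to the proof of Theorem~\ref{IGM on twisted cone}, and you have faithfully carried out that argument with $V^T$ replaced by $V^T \otimes \pi^* L$ throughout. The additional bookkeeping you supply---that $V^T \otimes \pi^* L$ remains split with Chern roots $f_s + \pi^*\rho$, that it projects to $F = V^G \otimes \pi^* L$ under Martin's map because $\pi^* L$ is pulled back from the common base, and that the modification exponent becomes $f_s \cdot \underline{d} + \langle \rho, \pi_*\beta\rangle$ via the projection formula---is exactly what is implicit in the paper's one-line reduction.
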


The following Corollary gives a closed-form expression for genus-zero Gromov--Witten invariants of the zero locus of a generic section $Z$ of $F$ in terms of invariants of $X$.

\begin{corollary} \label{I=J}
	With notation as in Corollary~\ref{extra line bundle}, let $Z$ be the zero locus of a generic section of $F \rightarrow \Fl(E)$. Suppose that ${-K_Z}$ is the restriction of an ample class on $\Fl(E)$ and that $\tau \in H^2(X)$. Then 
	$$J_{F_\mu}(t+\tau, z)=e^{-C(t)/z}I_{\GM}(t,\tau, z)$$
	where
	$$ C(t)=\sum_{\beta} n_\beta Q^\beta e^{\langle \beta, t \rangle} $$
	for some constants $n_\beta \in \QQ$ and the sum runs over the finite set 
	$$S=\{ \beta \in \NE(\Fl(E)) : \langle {-K}_{\Fl(E)} - c_1(F), \beta \rangle = 1\}$$ 
	If $Z$ is of Fano index two or more then this set is empty and $C(t) \equiv 0$.  Regardless, if the vector bundle $F$ is convex then the non-equivariant limit $\mu \to 0$ of $J_{F_\mu}$ exists and 
	$$
	J_Z\big(i^* t + i^* \tau, z \big) = i^* J_{F_0}(t+\tau, z)
	$$
	where $i \colon Z \to \Fl(E)$ is the inclusion map.
\end{corollary}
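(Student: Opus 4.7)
The plan is to combine Corollary~\ref{extra line bundle}, which places $I_{\GM}(t,\tau,{-z})$ on $\cL_{F_\mu}$, with an asymptotic analysis of $I_{\GM}$ as $z \to \infty$ and the String Equation, and then to identify the result (after a scalar translation) with $J_{F_\mu}(t+\tau,{-z})$. The final assertion about $Z$ will then follow from standard Quantum Lefschetz.

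First I would analyse the $z$-asymptotics of $I_{\GM}(t,\tau,z)$. The $\beta=0$ summand is $e^{t/z}\pi^{*}J_{X}^{0}(\tau,z) = z + (t+\tau) + O(z^{-1})$. For each effective $\beta \neq 0$, the leading $z$-power of the coefficient of $Q^{\beta} e^{\langle \beta, t\rangle}$ can be computed by combining the contributions of $\pi^{*}J_{X}^{\pi_{*}\beta}(\tau,z)$ (exploiting the Divisor Equation, applicable since $\tau \in H^{2}(X)$), the fibrewise hypergeometric factors (using $\sum_{j} d_{i,j} = \langle \beta, c_{1}(S_{i}^\vee)\rangle$), the Weyl-type factors, and the twist by $F$; a direct calculation shows that this leading degree equals $1 - \langle -K_{\Fl(E)} - c_{1}(F), \beta\rangle$. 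The ampleness of $-K_{\Fl(E)} - c_{1}(F)$ forces this to be $\leq 0$ for every effective $\beta \neq 0$, with equality exactly when $\beta \in S$. A dimension count on the ratios of linear factors $L+mz$, whose top $z$-coefficients are scalars, shows that the $z^{0}$-contribution for each $\beta \in S$ is a scalar $n_{\beta} \in \QQ$. Hence
$$I_{\GM}(t,\tau,z) = z + (t+\tau) + C(t) + O(z^{-1}),$$
with $C(t) = \sum_{\beta \in S} n_{\beta} Q^{\beta} e^{\langle \beta, t\rangle}$; equivalently, $I_{\GM}(t,\tau,{-z}) = -z + (t+\tau) + C(t) + O(z^{-1})$ as an element of $\cH_{\Fl(E)}$.

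Next I would invoke the String Equation in cone form: multiplication by $e^{c/z}$ for any scalar $c \in \Lambda$ preserves the Lagrangian cone $\cL_{F_\mu}$. Applied pointwise with $c = C(t)$ to $I_{\GM}(t,\tau,{-z}) \in \cL_{F_\mu}$, this produces a family of elements of $\cL_{F_\mu}$ whose asymptotic expansion is $-z + (t+\tau) + O(z^{-1})$; by the uniqueness of the $J$-function as the only family of this form on the cone, this family equals $J_{F_\mu}(t+\tau,{-z})$, which is the claimed identity after the substitution $z \mapsto -z$. If the Fano index of $Z$ is at least two then $\langle -K_{\Fl(E)} - c_{1}(F), \beta\rangle \geq 2$ for every effective $\beta \neq 0$, so $S = \emptyset$ and $C \equiv 0$. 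For the last statement, convexity of $F$ together with Proposition~\ref{nonequiexists} yields the non-equivariant limit $J_{F_0}$, and the Quantum Lefschetz hyperplane theorem~\cite{CoatesGivental2007, Coates2014} identifies its pullback along $i \colon Z \hookrightarrow \Fl(E)$ with $J_{Z}(i^{*}t+i^{*}\tau, z)$.

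The main obstacle is the degree calculation underlying the asymptotic analysis: identifying the total $z$-degree of the $Q^{\beta}$-coefficient with $1 - \langle -K_{\Fl(E)} - c_{1}(F), \beta\rangle$ requires bookkeeping the relative tangent contribution from the flag bundle structure to $-K_{\Fl(E)/X}$, the base contribution from $J_{X}^{\pi_{*}\beta}$ after applying the Divisor Equation, and the twist by $F$, and verifying that the leading $z^{0}$-coefficient for each $\beta \in S$ indeed lands in $H^{0}$.
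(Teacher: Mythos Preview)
Your proposal is correct and follows essentially the same route as the paper: establish the asymptotic expansion $I_{\GM}(t,\tau,z)=z+(t+\tau)+C(t)+O(z^{-1})$ via a degree count identifying the $Q^\beta$-coefficient's top $z$-power with $1-\langle -K_{\Fl(E)}-c_1(F),\beta\rangle$, then apply the String Equation and the uniqueness of the $J$-function, and finally invoke Quantum Lefschetz~\cite{Coates2014}. The paper organises your ``direct calculation'' by assigning $\deg z=\deg\mu=1$, $\deg\phi=k$ for $\phi\in H^{2k}$, and $\deg Q^\beta=\langle -K_X,\beta\rangle$, so that each factor becomes homogeneous and the observation that (since $\tau\in H^2$) negative total degree forces negative $z$-powers immediately handles both the leading-order count and the fact that $n_\beta\in H^0$; note also that Proposition~\ref{nonequiexists} as stated applies to split nef bundles, so for the existence of the $\mu\to 0$ limit of $J_{F_\mu}$ you should cite~\cite{CoatesGivental2007,Coates2014} directly (as the paper does) rather than that proposition.
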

\begin{proof}[Proof of Corollary~\ref{I=J}]
	The statement about Fano index two or more follows immediately from the Adjunction Formula 
	$$ K_Z=\big(K_{\Fl(E)}+c_1(F)\big)\big|_Z$$
	We need to show that 
	\begin{equation} \label{IGM asymptotics}
		I_{\GM}(t, \tau, z) = z + t + \tau + C(t) + O(z^{-1})
	\end{equation}
	Everything else then follows from the characterisation of the twisted $J$-function just below Definition~\ref{twisted J}, the String Equation
	\begin{align*}
		J_{F_\mu}(\tau + a, z) = e^{a/z} J_{F_\mu}(\tau,z) &&
		a \in H^0(\Fl(E))
	\end{align*}
	and~\cite{Coates2014}. To establish \eqref{IGM asymptotics}, it will be convenient to set $\deg(z) = \deg(\mu) = 1$, $\deg(\phi)=k$ for $\phi \in H^{2k}(\Fl(E))$, and $\deg(Q^\beta)=\langle -K_X, \beta \rangle$ if $\beta \in H_2(X)$. The degree axiom for Gromov--Witten invariants then shows that $J_X^{\pi_*\beta}$ is  homogeneous of degree $\langle K_X, \pi_*\beta \rangle +1$. Write
	$$I_{\GM}(t, \tau,z) = e^{\frac{t}{z}} \sum_{\beta \in {\NE}(\Fl(E))} 
	Q^{\beta} e^{\langle \beta, t \rangle} \pi^{*}J_{X}^{\pi_*\beta}(\tau, z)  \times {I}_{\beta}(z) \times M_\beta(z)
	$$
	where
	$$ M_\beta(z) = \prod_{s=1}^{k} \frac{\prod_{m=-\infty}^{f_s \cdot \underline{d} + \langle \rho, \pi_* \beta \rangle} f_s + \pi^* \rho + \mu + mz}{\prod_{m=-\infty}^{0} f_s + \pi^* \rho + \mu + mz}
	$$
	A straightforward calculation shows that 
	\begin{align*}
	{I}_{\beta}(z)&=z^{\langle K_{\Fl(E)}-\pi^*K_X, \beta \rangle}{i}_{\beta}(z)\\
	M_\beta(z)&=z^{\langle c_1(F), \beta \rangle}m_\beta(z)
	\end{align*}
	where $i_\beta(z), m_\beta(z) \in \cH_{\Fl(E)}$ are homogeneous of degree $0$.
	It follows that $\pi^{*}J_{X}^{\pi_*\beta}(\tau, z)  \times {I}_{\beta}(z) \times M_\beta(z)$ is homogeneous of degree $\langle K_{\Fl(E)}+c_1(F), \beta \rangle+1$ which is nonpositive for $\beta \neq 0$ by the assumptions on $-K_Z$. Since $\tau \in H^2(X)$, any negative contribution to the homogenous degree must come from a negative power of $z$, so that $\pi^{*}J_{X}^{\pi_*\beta}(\tau, z)  \times {I}_{\beta}(z) \times M_\beta(z)$ is $O(z^{-1})$, unless $\beta=0$ or $\beta \in S$. In the latter case, the expression has homogeneous degree $0$ and is therefore of the form $c_0+\tfrac{c_1}{z}+O(z^{-2})$ with $c_i$ independent of $z$ and of degree $i$. Relabeling $n_\beta=c_0$ and
expanding $I_{\GM}$ in powers of $z$, we obtain 
	\begin{multline*}
	I_{\GM}(t, \tau,z) =
	\big(1+t z^{-1}+O(z^{-2})\big) 
\big(\pi^*J_X^0 \times I_{0} \times M_0 +\Big(
	\sum_{\beta \in S}
	n_\beta Q^{\beta} e^{\langle \beta, t \rangle}  +O(z^{-1})\Big) +\sum_{0 \neq \beta \notin S} O(z^{-1})\big)\\
	=(z+\tau+t+C(t)+O(z^{-1}))
	\end{multline*}
	where $C(t)$ is as claimed. This proves \eqref{IGM asymptotics}, and the result follows. 
\end{proof}

We restate Corollary~\ref{I=J} in the case where the flag bundle is a Grassmann bundle, i.e $\ell=1$, relabelling $H_{1,j}=H_j$, $d_{1,j} = d_j$ and $r_1=r$. The rest of the notation here is as in \S\ref{notation}.

\begin{corollary}\label{explicit Gr} 
Let $V^G\rightarrow \Gr(r, E)$ be a vector bundle induced by a representation of $G$, let $L \to X$ be a line bundle with first Chern class $\rho$, and let $F = V^G \otimes \pi^* L$. Let $Z$ be the zero locus of a generic section of $F$. Suppose that $F$ is convex, that $-K_{\Gr(E,r)} - c_1(F)$ is ample, and that $\tau \in H^2(\Gr(r,E))$. Then the non-equivariant limit $\mu \to 0$ of the twisted $J$-function $J_{F_\mu}$ exists and satisfies
$$ J_Z\big(i^* t + i^*\tau, z \big) = i^* J_{F_0}(t+\tau, z) $$
where $i \colon Z \to \Gr(r, E)$ is the inclusion map. Furthermore
\begin{multline} \label{explicit J for Gr}
	J_{F_0}(t+\tau, z) = e^{\frac{t - C(t)}{z}} \sum_{\substack{\beta \in \NE(\Gr(r, E))}} Q^{\beta} e^{\langle \beta, t \rangle} \pi^{*}J_{X}^{\pi_*\beta}(\tau, z) \\
	\sum_{\substack{\underline{d} \in \ZZ^r\colon \\ d_1 + \cdots + d_r = \langle \beta, c_1(S^\vee) \rangle}} (-1)^{\epsilon(\underline{d})} 
	\prod_{i=1}^{r}\prod_{j=1}^{n}\frac{  \prod_{m=-\infty}^{0}{H}_i + \pi^* c_1(L_j) + mz}{\prod_{m=-\infty}^{d_i + \langle \pi_* \beta, c_1(L_j) \rangle} H_i + \pi^* c_1(L_j) + mz} \\
	\\ 
	\times \prod_{i < j} \frac{H_i - H_j  + (d_i - d_j)z}{H_i -H_j} 
	\times \prod_{s=1}^{k} \prod_{m=1}^{f_s \cdot \underline{d} + \langle \rho, \pi_* \beta \rangle} \big( f_s + \pi^* \rho + mz \big)
\end{multline}
Here the Abelianised bundle $V^T$ splits as a direct sum of line bundles $F_1 \oplus \cdots \oplus F_k$ with first Chern classes that we write as $c_1(F_s) = \sum_{i=1}^{r}f_{s,i} \tilde{H}_i$, $J^D_X(\tau,z )$ is as in \eqref{JX by degrees}, $\epsilon(\underline{d}) = \sum_{i<j} d_i - d_j$, $f_s \cdot \underline{d} = \sum_{i} f_{s,i} d_i$, $f_s = \sum_i f_{s,i} H_i$, and $C(t) \in H^0(\Gr(r, E), \Lambda)$ is the unique expression such that the right-hand side of \eqref{explicit J for Gr} has the form $z + t + \tau + O(z^{-1})$. 
\end{corollary}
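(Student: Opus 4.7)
The plan is to reduce the statement to Corollary~\ref{I=J} applied to the Grassmann bundle, and then to simplify the general expression from Definition~\ref{IGM definition special case} to the explicit form \eqref{explicit J for Gr} by specializing to $\ell = 1$. The statements about existence of the non-equivariant limit $\mu \to 0$, the mirror shift by $C(t)$, and the identity $J_Z\bigl(i^* t + i^* \tau, z\bigr) = i^* J_{F_0}(t+\tau, z)$ follow directly from Corollary~\ref{I=J}, since the hypotheses (convexity of $F$, ampleness of $-K_{\Gr(r,E)} - c_1(F)$, and $\tau \in H^2(\Gr(r,E))$) match verbatim. So the real content to verify is that $e^{-C(t)/z} I_{\GM}(t, \tau, z)$ equals the explicit right-hand side of \eqref{explicit J for Gr}.

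First I would specialize Definition~\ref{IGM definition special case} (combined with the modification described in Corollary~\ref{extra line bundle}) to the case $\ell = 1$, relabelling $H_{1,j} = H_j$, $d_{1,j} = d_j$, $r_1 = r$, and using the convention from \S\ref{brownohwn} that $H_{2,j} = -\pi^* c_1(L_j)$ and $d_{2,j} = -\langle \pi_* \beta, c_1(L_j) \rangle$. The factor coming from the pair $(i,i+1) = (1,2)$ in the general formula then reads
$$
\prod_{i=1}^{r} \prod_{j=1}^{n} \frac{\prod_{m=-\infty}^{0}\bigl(H_i + \pi^* c_1(L_j) + mz\bigr)}{\prod_{m=-\infty}^{d_i + \langle \pi_* \beta, c_1(L_j) \rangle}\bigl(H_i + \pi^* c_1(L_j) + mz\bigr)},
$$
which is the first hypergeometric product in \eqref{explicit J for Gr}.

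The next step is to simplify the Weyl-type factor
$$
\prod_{j \neq j'} \frac{\prod_{m=-\infty}^{d_j - d_{j'}} (H_j - H_{j'} + mz)}{\prod_{m=-\infty}^{0} (H_j - H_{j'} + mz)}
$$
by grouping the factors for each unordered pair $\{j, j'\}$ with $j < j'$. A short Pochhammer manipulation shows that each such pair contributes
$$
\frac{(-1)^{d_j - d_{j'}} \bigl(H_j - H_{j'} + (d_j - d_{j'}) z\bigr)}{H_j - H_{j'}},
$$
and collecting the signs over all $j < j'$ produces the overall prefactor $(-1)^{\epsilon(\underline{d})}$. The twisting factor from Corollary~\ref{extra line bundle} associated to $F = V^G \otimes \pi^* L$, after taking the non-equivariant limit $\mu \to 0$ using that $F$ is convex -- so that $f_s + \pi^* \rho$ is nef and $f_s \cdot \underline{d} + \langle \rho, \pi_* \beta \rangle \geq 0$ on the effective summation range -- telescopes to the finite product $\prod_{s=1}^{k} \prod_{m=1}^{f_s \cdot \underline{d} + \langle \rho, \pi_* \beta \rangle} (f_s + \pi^* \rho + mz)$.

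The only mildly delicate point is the Pochhammer manipulation of the Weyl-type factor: one must track the sign carefully, handle the degenerate case $d_j = d_{j'}$ (in which the paired factor is simply $1$), and verify that the potential pole along $\omega = \prod_{j<j'}(H_j - H_{j'})$ cancels -- which is precisely the content of Lemma~\ref{IGMexists} specialized to $G = \GL_r$. None of this is genuinely difficult, being the classical $\GL_r$ Weyl-denominator simplification, but it is the one place where the calculation is not pure substitution. Everything else is bookkeeping.
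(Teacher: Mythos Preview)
Your proposal is correct and matches the paper's (one-sentence) proof: specialize Corollary~\ref{I=J} to $\ell=1$, cancel paired root factors in the Weyl modification exactly as in the proof of Lemma~\ref{IGMexists} to produce the sign $(-1)^{\epsilon(\underline{d})}$ and the factor $\prod_{i<j}\frac{H_i-H_j+(d_i-d_j)z}{H_i-H_j}$, then take $\mu \to 0$. One minor correction: convexity of $F = V^G\otimes\pi^*L$ on $\Gr(r,E)$ does not in general force each Abelianized line bundle $F_s\otimes\pi^*L$ to be nef, but you do not need this --- the existence of the limit $\mu\to 0$ is already supplied by Corollary~\ref{I=J}, and the telescoping of the twist factor to $\prod_{m=1}^{N}(f_s+\pi^*\rho+mz)$ is a purely formal identity (with the standard convention when $N<0$).
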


\begin{remark} \label{effectivesummationrange Gr}
	For a more explicit formula for $C(t)$, see Corollary~\ref{I=J}; in particular if $Z$ has Fano index two or greater then $C(t) \equiv 0$. By Remark~\ref{effectivesummationrange} the summand in \eqref{explicit J for Gr} is zero unless for each $i$ there exists a $j$ such that $d_i + \langle \pi_* \beta, c_1(L_j) \rangle \geq 0$
\end{remark}

\begin{proof}[Proof of Corollary~\ref{explicit Gr}]
	We cancelled terms in the Weyl modification factor, as in the proof of Lemma~\ref{IGMexists}, and took the non-equivariant limit $\mu \to 0$.
\end{proof}

\begin{remark}
	The relationship between $I$-functions (or generating functions for genus-zero quasimap invariants) and $J$-functions (which are generating functions for genus-zero Gromov--Witten invariants) is particularly simple in the Fano case~\cite{Givental1996toric}~\cite[\S1.4]{CFK2014}, and for the same reason Corollary~\ref{I=J} holds without the restriction $\tau \in H^2(X)$ if $Z \to X$ is relatively Fano\footnote{That is, if the relative anticanonical bundle ${-K}_{Z/X}$ is ample.}. This never happens for blow-ups $\tilde{X} \to X$, however, and it is hard to construct examples where $Z \to X$ is relatively Fano and the rest of the conditions of Corollary~\ref{I=J} hold. We do not know of any such examples.
\end{remark}

\begin{remark} \label{what can we compute}
	Corollary~\ref{I=J} gives a closed-form expression for the small $J$-function of $Z$ -- or, equivalently, for one-point gravitional descendant invariants of $Z$ -- in the case where $Z$ is Fano. But in general (that is, without the Fano condition on $Z$) one can use Birkhoff factorization, as in~\cite{CoatesGivental2007, CFK2014} and~\cite[\S3.8]{CCIT2019}, to compute any twisted genus-zero gravitional descendant invariant of $\Fl(E)$ in terms of genus-zero descendant invariants of $X$. The twisting here is with respect to the $\CC^\times$-equivariant Euler class and the vector bundle $F$. Thus Corollary~\ref{I=J} determines the Lagrangian submanifold $\cL_{F_\mu}$ that encodes twisted Gromov--Witten invariants. Applying~\cite[Theorem~1.1]{Coates2014}, we see that Corollary~\ref{I=J} together with Birkhoff factorization allows us to compute any genus-zero Gromov--Witten invariant of the zero locus $Z$ of the form
	\begin{equation} \label{what can we compute GW}
		\langle \theta_1 \psi^{i_1}, \dots, \theta_n \psi^{i_n} \rangle_{0,n,d} 
	\end{equation}
	where all but one of the cohomology classes $\theta_i$ lie in $\image(i^*) \subset H^\bullet(Z)$ and the remaining $\theta_i$ is an arbitrary element of $H^\bullet(Z)$.  Here $i \colon Z \to \Fl(E)$ is the inclusion map.
\end{remark}

\begin{remark} \label{what can we compute blow up}
	Applying Remark~\ref{what can we compute} to the blow-up $\tilde{X} \to X$ considered in the introduction, we see that Corollary~\ref{I=J} together with Birkhoff factorization allows us to compute arbitrary invariants of $\tilde{X}$ of the form \eqref{what can we compute GW} in terms of genus-zero gravitional descendants of $X$. In this case $\image(i^*) \subset H^\bullet(\tilde{X})$ contains all classes from $H^\bullet(X)$ and also the class of the exceptional divisor.
\end{remark}
\section{The Main Geometric Construction} \label{geometric}

\subsection{Main Geometric Construction}
Let $F$ be a locally free sheaf on a variety $X$. We denote by $F(x)$ its fibre over $x$, a vector space over the residue field $\kappa(x)$. A morphism $\varphi$ of locally free sheaves induces a linear map on fibres, denoted by $\varphi(x)$. We make the following definition:
\begin{definition}
Let $\varphi \colon E^m \rightarrow F^n$ a morphism of locally free sheaves of rank $m$ and $n$ respectively. 
	The $k$-th degeneracy locus is the subvariety of $X$ defined by $$D_k(\varphi)=\big \{ x \in X\colon  \rk \, 
\varphi(x)  \leq k \big\}$$
Note that $D_k(\varphi)=X$ if $k \geq \min\{m,n\}$; if $k=\min\{m,n\}-1$ we simply call $D_k(\varphi)$ the degeneracy locus of $\varphi$.
\end{definition}
We have the following results:
\begin{itemize}
	\item Scheme-theoretically, $D_k(\varphi)$ may be defined as the zero locus of the section $\wedge^k\varphi$; this shows that locally the ideal of $D_k(\varphi)$ is defined by the $(k+1) \times (k+1)$-minors of $\varphi$.
	\item If $E^\vee \otimes F$ is globally generated, then $D_k(\varphi)$ of a generic $\varphi$ is either empty or has expected codimension $(m-k)(n-k)$, and the singular locus of $D_k(\varphi)$ is contained in $D_{k-1}(\varphi)$. In particular, if $\varphi$ is generic and $\dim X < (m-k+1)(n-k+1)$, then $D_k(\varphi)$ is smooth \cite[Theorem 2.8]{Ottaviani1995}.
	\item We may freely assume that $m \geq n$ in what follows, since we can always replace  $\varphi$ with its dual map whose degeneracy locus is the same. 
\end{itemize}

\begin{proposition}\label{geometricconstruction}
Let X be a smooth variety, and $\varphi\colon E^m \rightarrow F^n$ a generic morphism of locally free sheaves on $X$. Suppose that $m \geq n$ and write $r=m-n$. Let $Y=D_{n-1}(\varphi)$ be the degeneracy locus of $\varphi$, and assume that $\varphi$ has generically full rank, that $Y$ has the expected codimension $m-n+1$ and that $Y$ is smooth. 
Let  $\pi\colon \Gr(r,E) \rightarrow X$ be the Grassmann bundle of $E$ on $X$.
Then the blow-up $Bl_Y(X)$ of $X$ along $Y$ is a subvariety of $\Gr(r,E)$, cut out as the zero locus of the regular section $s \in \Gamma(\Hom(S, \pi^*F))$ defined by the composition $$S \hookrightarrow \pi^*E \xrightarrow{\pi^*\varphi} \pi^*F $$
where the first map is the canonical inclusion.
\end{proposition}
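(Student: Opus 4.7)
The plan is to identify the zero locus $Z(s) \subset \Gr(r, E)$ with $\mathrm{Bl}_Y X$ by establishing the two geometric properties that characterise the blow-up: the projection $p = \pi|_{Z(s)} \colon Z(s) \to X$ is proper and birational, an isomorphism over $X \setminus Y$; and $p^{-1}(Y)$ is a Cartier divisor. First I would describe $Z(s)$ set-theoretically: at a point $(x, V) \in \Gr(r, E)$, the section $s$ vanishes iff the composition $V \hookrightarrow E(x) \xrightarrow{\varphi(x)} F(x)$ vanishes, i.e.\ iff $V \subseteq \ker \varphi(x)$. For $x \in X \setminus Y$, $\dim \ker \varphi(x) = r$, so $V = \ker \varphi(x)$ is forced and $p$ is bijective there. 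The genericity hypotheses on $\varphi$ ensure that $s$ is a regular section of the rank-$rn$ bundle $S^\vee \otimes \pi^* F$, so $Z(s)$ has codimension $rn$ in $\Gr(r, E)$; combined with $\dim \Gr(r, E) = \dim X + rn$ this gives $\dim Z(s) = \dim X$, so $p$ is a proper birational morphism and an isomorphism away from $Y$.

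Next I would realise $p^{-1}(Y)$ as a Cartier divisor. On $Z(s)$, the vanishing of $s$ means $\pi^* \varphi$ annihilates the tautological subbundle $S$, so it factors as
\[
\pi^* E \twoheadrightarrow Q := \pi^* E / S \xrightarrow{\bar\varphi} \pi^* F,
\]
where $\bar\varphi$ is a morphism between rank-$n$ bundles. Its determinant $\det \bar\varphi$ is a section of the line bundle $(\det Q)^\vee \otimes \det \pi^* F$. At a point $(x, V) \in Z(s)$, $\det \bar\varphi$ vanishes iff the induced map $E(x)/V \to F(x)$ has nontrivial kernel, iff $\dim \ker \varphi(x) > r$, iff $x \in Y$. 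A local computation in a trivialisation where $\varphi$ is an $n \times m$ matrix confirms that $\det \bar\varphi$ scheme-theoretically cuts out $p^{-1}(Y)$, which is therefore an effective Cartier divisor on $Z(s)$.

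By the universal property of the blow-up, $p$ factors uniquely through $\mathrm{Bl}_Y X$ via a proper birational morphism $f \colon Z(s) \to \mathrm{Bl}_Y X$ over $X$. To conclude that $f$ is an isomorphism, I would compare fibres over $Y$: at a point $y$ where $\varphi$ has corank exactly one, $p^{-1}(y) = \Gr(r, \ker \varphi(y)) = \mathbb{P}^r$, while the fibre of $\mathrm{Bl}_Y X$ is $\mathbb{P}(N_{Y/X,y}) = \mathbb{P}^r$; the identification between these two $\mathbb{P}^r$'s comes from the conormal sequence (the conormal bundle $I_Y / I_Y^2$ is generated by the components of $\varphi$ restricted to $Y$, so $N_{Y/X}$ is identified with a quotient of $F|_Y$ that is naturally dual to $\ker \varphi|_Y$), and this identification is compatible with the embedding into $\Gr(r, E)$. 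Since $f$ is a proper birational morphism between smooth irreducible varieties with connected fibres that is bijective on points, Zariski's main theorem yields that $f$ is an isomorphism.

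The main obstacle is the fibre comparison in the last step. Besides the need to identify $N_{Y/X}$ with an appropriate (co)kernel of $\varphi|_Y$ in a way compatible with the Grassmannian embedding, one must also handle the deeper degeneracy loci $D_{n-k}$ for $k \geq 2$, over which the naive fibre dimension of $Z(s)$ jumps; the expected codimensions $k(r+k)$ are large enough that these loci cannot produce extra irreducible components of $Z(s)$, but one must verify that the Cartier condition together with the universal property still forces agreement with the blow-up over them. For the application in Theorem~\ref{step one} ($n=1$), the deep degeneracy loci are empty and this difficulty disappears.
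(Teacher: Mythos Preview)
Your approach is genuinely different from the paper's. You argue via the universal property of the blow-up: show $p^{-1}(Y)$ is Cartier (your $\det\bar\varphi$ trick is a clean way to see this), obtain $f\colon Z(s)\to\mathrm{Bl}_Y X$, then try to conclude $f$ is an isomorphism. The paper instead works locally: on an open set where $E$ and $F$ are trivial, row and column operations bring $\varphi$ to the normal form
\[
\begin{pmatrix} x_0 & \cdots & x_r & 0 & \cdots & 0\\ 0 & \cdots & 0 & 1 & & \\ \vdots & & & & \ddots & \\ 0 & \cdots & 0 & & & 1 \end{pmatrix},
\]
using smoothness of $Y$ to guarantee $x_0,\dots,x_r$ are part of a regular system of parameters. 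This reduces immediately to the case $n=1$, $m=r+1$, where $\Gr(r,E)$ is a $\mathbb{P}^r$-bundle and the equation of $Z(s)$ is $\sum x_iy_i=0$, visibly the blow-up. Regularity of $s$ and smoothness of $Z(s)$ fall out of the local equation for free.

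The obstacle you flag is real, and in your write-up it is not yet resolved. Your final step invokes Zariski's Main Theorem for a map ``between smooth irreducible varieties'', but you have not proved $Z(s)$ is smooth; regularity of $s$ only gives that $Z(s)$ is a local complete intersection of the right dimension. More seriously, even granting smoothness, you assert that $f$ is bijective on fibres over $Y$ by matching both to $\mathbb{P}^r$, but you do not check that the induced map between these two $\mathbb{P}^r$'s is an isomorphism rather than, say, a degree-$d$ cover or a constant map. Making this precise requires exactly the conormal identification you allude to, and over the deeper degeneracy loci the bookkeeping is worse. None of this is insurmountable, but the paper's local normal-form argument simply sidesteps all of it: once $\varphi$ is in the displayed form the identification with the blow-up is a one-line computation, and the deep degeneracy loci never enter because locally the problem has been reduced to $n=1$.
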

\begin{proof}
We write points in $\Gr(r,E)$ as $(p, V)$, where $p \in X$ and $V$ is a $r$-dimensional subspace of the fibre $E(x)$. At $(p, V)$, the section $s$ is given by the composition 
$$V \hookrightarrow E(x) \xrightarrow{\varphi(x)} F(x)$$
so $s$ vanishes at $(p, V)$ if and only if $V \subset \ker\varphi(x)$.

The statement is local on $X$, so fix a point $P \in X$ and a Zariski open neighbourhood $U=\Spec(A)$  with trivialisations $E |_U=A^m, F |_U=A^n $. We will show that the equations of $Z(s) \cap U$ and $Bl_{U \cap Y}U$ agree. Under these identifications
$\varphi$ is given by a $n \times m$ matrix with entries in $A$. Since $\varphi$ has generically maximal rank and $Y$ is nonsingular, after performing row and column operations and shrinking $U$ if necessary, we may assume that $\varphi$ is given by the matrix 
$$\begin{pmatrix}
x_0&\dots &x_{r}&0&0&\dots&0\\
0& \dots &0&1&0&\dots&0\\
0& \dots &0&0&1&\dots&0\\
\vdots&\vdots&\vdots&\vdots&\vdots&\vdots&\vdots\\
0&\dots&0&0&\dots&0&1\\
\end{pmatrix}$$
Note that the ideal of the minors of this matrix is just $I=(x_0, \dots x_{r})$ and that $x_0, \dots, x_r$ form part of a regular system of parameters around $P$,  so we may assume that $n=1, m=r+1$.
Writing $y_i$ for the basis of sections of $S^\vee$ on $\Gr(r,A^{r+1})$, we see that $Z(s)$ is given by the equation
\begin{align*}
x_0y_0+\dots+x_ry_r&=0
\end{align*}
Under the Pl\"ucker isomorphism $$\Gr(r,A^{r+1}) \rightarrow \PP(\wedge^{r}A^{r+1})\cong U \times \PP^r_{y_0,\dots, y_r}$$
$Z(s)$ maps to the variety cut out by the minors of the matrix 
$$\begin{pmatrix}
x_0&\dots& x_{r}\\
{y}_0& \dots &{y}_{r}
\end{pmatrix}$$
i.e the blowup of $Y \cap U$ in $U$. 
\end{proof}

\section{Examples} \label{examples}

We close by presenting three example computations that use Theorems~\ref{step one} and~\ref{step two}, calculating genus-zero Gromov--Witten invariants of blow-ups of projective spaces in various high-codimension complete intersections. Recall, as we will need it below, that if $E \to X$ is a vector bundle of rank $n$ then the anticanonical divisor of $\Gr(r,E)$ is  
\begin{equation}
	\label{-K}
	{-K}_{\Gr(r,E)}=\pi^*\left(-K_X+r(\det E)\right) +n(\det S^\vee) 
\end{equation}
where $S \to \Gr(r,E)$ is the tautological subbundle. Recall too that the \emph{regularised quantum period} of a Fano manifold $Z$ is the generating function
$$
\widehat{G}_Z(x) = 1 + \sum_{d = 2}^\infty d! c_d x^d
$$
for genus-zero Gromov--Witten invariants of $Z$, where
\begin{align*}
	c_d = \sum_{\beta} \langle \theta \psi_1^{d-2} \rangle_{0,1,\beta}
	&& \text{for $\theta \in H^{\text{top}}(Z)$ the class of a volume form}
\end{align*}
and the sum runs over effective classes $\beta$ such that $\langle \beta, {-K}_Z\rangle = d$.

\begin{example}
We will compute the regularised quantum period of $\tilde{X}=\Bl_Y\PP^4$ where $Y$ is a plane conic. Consider the situation as in \S\ref{notation} with:
\begin{itemize}
	\item $X=\PP^4$ 
	\item $E= \cO \oplus \cO \oplus \cO(-1)$
	\item $G = \GL_{2}(\CC)$,  $T= (\CC^\times)^2 \subset G$
\end{itemize}
Then $A \GIT G$ is $\Gr(2, E)$, and $A \GIT T$ is the $\PP^2 \times \PP^2$-bundle $\PP(E) \times_{\PP^4} \PP(E) \to \PP^4$. By Proposition \ref{geometricconstruction} the zero locus $\tilde{X}$ of a section of $S^\vee \otimes \pi^*(\OO(1))$ on $\Gr(2,E)$ is the blowup of $\PP^4$ along the complete intersection of two hyperplanes and a quadric. We identify the group ring $\QQ[H_2(A \GIT T,\ZZ)]$ with $\QQ[Q,Q_1,Q_2]$, where $Q$ corresponds to the pullback of the hyperplane class of $\PP^4$ and $Q_i$ corresponds to $\tilde{H}_i$. Similarly, we identify $\QQ[H_2(A \GIT G,\ZZ)]$ with $\QQ[Q, q]$, where again $Q$ corresponds to the pullback of the hyperplane class of $\PP^4$ and $q$ corresponds to the first Chern class of $S^\vee$.

We will need Givental's formula~\cite{Givental1996equivariant} for the $J$-function of $\PP^4$:
\begin{align*}
	J_{\PP^4}(\tau, z)=z e^{\tau/z}\sum_{D = 0}^\infty \frac{Q^D e^{D\tau}}{\prod_{m=1}^D (H+mz)^5} && \tau \in H^2(\PP^4)
\end{align*}
In the notation of \S\ref{notation}, we have $\ell=1$, $r_\ell=r_1=2$, $r_{\ell+1}=3$. We relabel $\tilde{H}_{\ell,j}=\tilde{H}_j$ and $d_{\ell, j}=d_j$. We have that $\tilde{H}_{\ell+1, 1}=\tilde{H}_{\ell+1, 2}=0$, $\tilde{H}_{\ell+1, 3}=\pi^*H$ and $d_{\ell+1, 1}=d_{\ell+1, 2}=0$, $d_{\ell+1, 3}=D$. Write $F = S^\vee \otimes \pi^* \cO(1)$. Corollary~\ref{explicit Gr} and Remark~\ref{effectivesummationrange Gr} give
\begin{multline*}
J_{F_0}(t, \tau,z) = z e^{\frac{t + \tau}{z}}\sum_{D=0}^\infty \sum_{d_1=0}^\infty \sum_{d_2=0}^\infty \frac{(-1)^{d_1-d_2} Q^{D} q^{d_1+d_2} e^{D\tau}e^{(d_1 +d_2)t} \prod_{i=1}^{2} \prod_{m=1}^{d_i + D} (H_i + H + mz)}{\prod_{m=1}^{D} (H+ mz)^5 \prod_{m=1}^{d_1}(H_1+mz)^2\prod_{m=1}^{d_2}(H_2+mz)^2} \\
\times \prod_{i=1}^2\frac{\prod_{m=-\infty}^{0} (H_i -H + mz)}{\prod_{m=-\infty}^{d_i-D}(H_i -H + mz)}
 \frac{(H_1 - H_2 + z(d_1 - d_2))}{H_1 - H_2}
\end{multline*}
To obtain the quantum period we need to calculate the anticanonical bundle of $\tilde{X}$.
Equation \eqref{-K} and the adjunction formula give
$$-K_{\widetilde{X}}=3H+3\det S^\vee-(2H+\det S^\vee)=H+2 \det S^\vee.$$
To extract the quantum period from the non-equivariant limit $J_{F_0}$ of the twisted $J$-function, we take the component along the unit class $1 \in H^\bullet(A \GIT G; \QQ)$, set $z=1$, and set $Q^\beta=x^{\langle \beta, -K_{\tilde{X}} \rangle}$. That is, we set $\lambda = 0$, $t=0$, $\tau=0$, $z=1$, $q=x^2$, $Q=x$, and take the component along the unit class, obtaining 
\begin{multline*}
	G_{\tilde{X}}(x)= \sum_{n=0}^\infty \sum_{l=n+1}^\infty \sum_{m=l}^\infty \textstyle (-1)^{l+m-1}x^{l+2m+2n} 
	\frac{(l+n)!(l+m)!(l-n-1)!}{(l!)^5(m!)^2(n!)^2(n-l)!}(n-m)\\
	+ \sum_{l=0}^\infty \sum_{m=l}^\infty \sum_{n=l}^\infty \textstyle (-1)^{m+n}x^{l+2m+2n} 
	\frac{(l+n)!(l+m)!}{(l!)^5(m!)^2(n!)^2(n-l)!(m-l)!}
	\Big(1+(n-m)(-2H_{n}+H_{l+n}-H_{n-l}) \Big) 
\end{multline*}
Thus the first few terms of the regularized quantum period are:
\begin{multline*}
	\widehat{G}_{\tilde{X}}(x)=1+12x^3+120x^5+540x^6+20160x^8+33600x^9+113400x^{10} \\ +2772000x^{11}+2425500x^{12}+\cdots
\end{multline*}
This strongly suggests that $\tilde{X}$ coincides with the quiver flag zero locus with ID 15 in~\cite{Kalashnikov2019}, although this is not obvious from the constructions.
\end{example}

\begin{example}
We will compute the regularised quantum period of $\tilde{X}=\Bl_Y\PP^6$, where $Y$ is a 3-fold given by the intersection of a hyperplane and two quadric hypersurfaces. Consider the situation as in \S\ref{notation} with:
\begin{itemize}
	\item $X=\PP^6$ 
	\item $E= \cO \oplus \cO \oplus \cO(1)$
	\item $G = \GL_{2}(\CC)$,  $T= (\CC^\times)^2 \subset G$
\end{itemize}
Then $A \GIT G$ is $\Gr(2, E)$, and $A \GIT T$ is the $\PP^2 \times \PP^2$-bundle $\PP(E) \times_{\PP^6} \PP(E) \to \PP^6$. By Proposition \ref{geometricconstruction} the zero locus $\tilde{X}$ of a section of $S^\vee \otimes \pi^*(\OO(2))$ on $\Gr(2,E)$ is the blowup of $\PP^6$ along the complete intersection of a hyperplane and two quadrics. We identify the group ring $\QQ[H_2(A \GIT T,\ZZ)]$ here with $\QQ[Q,Q_1,Q_2]$, where $Q$ corresponds to the pullback of the hyperplane class of $\PP^6$ and $Q_i$ corresponds to $\tilde{H}_i$. Similarly, we identify $\QQ[H_2(A \GIT G,\ZZ)]$ with $\QQ[Q, q]$, where again $Q$ corresponds to the pullback of the hyperplane class of $\PP^6$ and $q$ corresponds to the first Chern class of $S^\vee$.

The $J$-function of $\PP^6$ is~\cite{Givental1996equivariant}:
\begin{align*}
	J_{\PP^6}(\tau, z)=z e^{\tau/z}\sum_{D = 0}^\infty \frac{Q^D e^{D\tau}}{\prod_{m=1}^D (H+mz)^7} && \tau \in H^2(\PP^6)
\end{align*}
In the notation of \S\ref{notation}, we have $\ell=1$, $r_\ell=r_1=2$, $r_{\ell+1}=3$. We relabel $\tilde{H}_{\ell,j}=\tilde{H}_j$ and $d_{\ell, j}=d_j$. We have that $\tilde{H}_{\ell+1, 1}=\tilde{H}_{\ell+1, 2}=0$, $\tilde{H}_{\ell+1, 3}=- \pi^*H$ and $d_{\ell+1, 1}=d_{\ell+1, 2}=0$, $d_{\ell+1, 3}=-D$. Write $F = S^\vee \otimes \pi^* \cO(2)$. Corollary~\ref{explicit Gr} and Remark~\ref{effectivesummationrange Gr} give
\begin{multline*}
J_{F_0}(t, \tau, z) = z e^{\frac{t+\tau}{z}} \sum_{D=0}^\infty \sum_{d_1 = -D}^\infty \sum_{d_2= -D}^\infty \frac{Q^D q^{d_1+d_2} e^{D\tau}e^{(d_1+d_2)t}}{\prod_{m=1}^D(H+mz)^7} \prod_{i=1}^2
\frac{\prod_{m=-\infty}^{0}(H_i+mz)^2}
{\prod_{m=-\infty}^{d_i}(H_i+mz)^2}\\
\times
\prod_{i=1}^2
\frac{\prod_{m=1}^{d_i+2D}(H_i+2H+mz)}{\prod_{m=1}^{d_i + D} (H_i + H + mz)}
(-1)^{d_1-d_2}\frac{(H_1-H_2+z(d_1-d_2))}{H_1-H_2}
\end{multline*}
Again we will need the anticanonical bundle of $\tilde{X}$, which by \eqref{-K} and the adjunction formula is 
$$-K_{\widetilde{X}}=9H + 3 \det(S^*)-(4H+\det(S^*))=5H + 2\det(S^*).$$
To extract the quantum period from $J_{F_0}$, we take the component along the unit class $1 \in H^\bullet(A \GIT G; \QQ)$, set $z=1$, and set $Q^\beta=x^{\langle \beta, -K_{\tilde{X}} \rangle}$. 
That is, we set $\lambda = 0$, $t=0$, $\tau=0$, $z=1$, $q=x^2$, $Q=x^5$, and take the component along the unit class, obtaining 
 \begin{multline*}
 G_{\tilde{X}}(x)=\sum_{D=0}^\infty \sum_{d_1=0}^\infty \sum_{d_2=0}^\infty (-1)^{d_1 + d_2}x^{5D+2d_1+2d_2} 
 \frac{(d_1 + 2D)! (d_2 + 2D)!}{(D!)^7(d_1!)^2(d_2!)^2(d_1 +D)!(d_2 + D)!} \\
 \times \Big(1+(d_1-d_2)(-2H_{d_1}+H_{d_1+2D}-H_{d_1+D}) \Big)
 \end{multline*}
 The first few terms of the regularized quantum period are:
 $$\widehat{G}_{\tilde{X}}(x) = 1+ 480x^5 + 5040 x^7 + 4082400 x^{10} + 119750400 x^{12} +  681080400 x^{14} +  \cdots$$
\end{example}

\begin{example}
We will compute the regularised quantum period of $\tilde{X}=\Bl_Y\PP^6$, where $Y$ is a quadric surface given by the intersection of 3 generic hyperplanes and a quadric hypersurface. Consider the situation as in \S\ref{notation} with:
\begin{itemize}
	\item $X=\PP^6$ 
	\item $E= \cO \oplus \cO \oplus \cO \oplus \cO(2)$
	\item $G = \GL_{3}(\CC)$,  $T= (\CC^\times)^3 \subset G$
\end{itemize}
Then $A \GIT G$ is $\Gr(3, E)$, and $A \GIT T$ is $\PP(E) \times_{\PP^6} \PP(E) \times_{\PP^6} \PP(E) \to \PP^6$. By Proposition \ref{geometricconstruction} the zero locus $\tilde{X}$ of a section of $S^\vee \otimes \pi^*(\OO(1))$ on $\Gr(3,E)$ is the blowup of $\PP^6$ along the complete intersection of three hyperplanes and a quadric. We identify the group ring $\QQ[H_2(A \GIT T,\ZZ)]$ with $\QQ[Q,Q_1,Q_2,Q_3]$, where $Q$ corresponds to the pullback of the hyperplane class of $\PP^6$ and $Q_i$ corresponds to $\tilde{H}_i$. Similarly, we identify $\QQ[H_2(A \GIT G,\ZZ)]$ with $\QQ[Q, q]$, where again $Q$ corresponds to the pullback of the hyperplane class of $\PP^6$ and $q$ corresponds the first Chern class of $S^\vee$. 

In the notation of \S\ref{notation}, we have $\ell=1, r_\ell=r_1=3, r_{\ell+1}=4$. We relabel $\tilde{H}_{\ell,j}=\tilde{H}_j$ and $d_{\ell, j}=d_j$. We have that $\tilde{H}_{\ell+1, 1}=\tilde{H}_{\ell+1, 2} = \tilde{H}_{\ell + 1,3}=0$, $\tilde{H}_{\ell+1, 4}=- \pi^*2H$ and $d_{\ell+1, 1}=d_{\ell+1, 2}=d_{\ell + 1,3} = 0$, $d_{\ell+1, 4}=-2D$. 
Write $F = S^\vee \otimes \pi^* \cO(1)$. Corollary~\ref{explicit Gr} and Remark~\ref{effectivesummationrange Gr} give
\begin{multline*}
J^{F_0}(t, \tau, z) = z e^{\frac{t+\tau}{z}} 
\sum_{D = 0}^\infty \sum_{d_1 = -2D}^\infty \sum_{d_2 = -2D}^\infty \sum_{d_3 = -2D}^\infty \frac{Q^{D}q^{d_1+d_2+d_3} e^{D\tau}e^{(d_1+d_2+d_3) t}}{\prod_{m=1}^D(H+mz)^7}\\
\times \prod_{i=1}^3
\frac{\prod_{m=-\infty}^{0}(H_i+mz)^3}
{\prod_{m=-\infty}^{d_i}(H_i+mz)^3}
\prod_{i=1}^3
\frac{1}
{\prod_{m=1}^{d_i+2D} (H_i + 2H+ mz)}
\prod_{i=1}^3
\frac{\prod_{m=-\infty}^{d_i+D}(H_i+H+mz)}{\prod_{m=-\infty}^{0}(H_i+H+mz)} \\
\times \frac{(H_1-H_2+z(d_1-d_2))}{H_1-H_2}
\frac{(H_1-H_3+z(d_1-d_3))}{H_1-H_3}\frac{(H_2-H_3+z(d_2-d_3))}{H_2-H_3}
\end{multline*}
Arguing as before,
$$-K_{\widetilde{X}}=11H + 4 \det(S^*)-(3H+\det(S^*))=8H + 3\det(S^*).$$
To extract the quantum period from $J_{F_0}$, we set $\lambda = 0$, $t=0$, $\tau=0$, $z=1$, $q=x^3$, $Q=x^8$, and take the component along the unit class. The first few terms of the regularised quantum period are:
\begin{multline*}
	\widehat{G}_{\tilde{X}}(x) = 1+ 108x^3 + 17820 x^6 + 5040 x^{8} + 5473440 x^{9} +  56364000 x^{11} + 1766526300 x^{12} \\ + 117076459500 x^{14} + 672012949608 x^{15} + \cdots
\end{multline*}
\end{example}

\begin{remark}
	Strictly speaking the use of Theorem~\ref{step two} in the examples just presented was not necessary. Whenever the base space $X$ is a projective space, or more generally a Fano complete intersection in a toric variety or flag bundle, then one can replace our use of Theorem~\ref{step two} (but not Theorem~\ref{step one}) by~\cite[Corollary~6.3.1]{CFKS2008}. However there are many examples that genuinely require both Theorem~\ref{step one} and Theorem~\ref{step two}: for instance when $X$ is a toric complete intersection but the line bundles that define the center of the blow-up do not arise by restriction from line bundles on the ambient space. (For a specific such example one could take $X$ to be the three-dimensional Fano manifold $\mathrm{MM}_{3\text{--}9}$: see~\cite[\S62]{CCGK16}.) For notational simplicity we chose to present examples with $X = \PP^N$, but the approach that we used applies without change to more general situations. 
\end{remark}
\bibliography{bibliography}
\bibliographystyle{alpha}

\end{document}